\documentclass[11pt,reqno,oneside,a4paper]{article}
\usepackage[a4paper,includeheadfoot,left=25mm,right=25mm,top=00mm,bottom=20mm,headheight=20mm]{geometry}
\usepackage{amssymb,amsmath,amsthm}
\usepackage{xcolor,graphicx}
\usepackage{verbatim}
\usepackage{hyperref}
\usepackage{mathtools}
\usepackage{titling}
\usepackage{fancyhdr}
\newcommand{\runningtitle}{Running Title}
\pagestyle{fancy} \lhead{{\theauthor}} \chead{} \rhead{{\runningtitle}} \lfoot{} \cfoot{\thepage} \rfoot{}

\hyphenation{non-zero}

\newtheorem{thm}{Theorem}[section]
\newtheorem{lem}[thm]{Lemma}

\newtheorem{prop}[thm]{Proposition}

\theoremstyle{definition}
\newtheorem{defn}[thm]{Definition}

\newtheorem{ntn}[thm]{Notation}

\newtheorem*{prob*}{Problem}
\theoremstyle{remark}
\newtheorem{rmk}[thm]{Remark}

\numberwithin{equation}{section}

\newcommand{\NN}{\mathbb N}              
\newcommand{\RR}{\mathbb R}              
\newcommand{\CC}{\mathbb C}              
\renewcommand{\Re}{\operatorname*{Re}} 
\newcommand{\D}{\ensuremath{\,\mathrm{d}}}
\newcommand{\ri}{\ensuremath{\mathrm{i}}}
\newcommand{\re}{\ensuremath{\mathrm{e}}}
\newcommand{\la}{\ensuremath{\lambda}}
\renewcommand{\epsilon}{\varepsilon}
\renewcommand{\geq}{\geqslant}
\renewcommand{\leq}{\leqslant}




\providecommand{\clos}{\operatorname{clos}}

\providecommand{\argdot}{{}\cdot{}}

\newcommand{\Mspacer}{\;} 
\newcommand{\M}[3]{#1_{#2\Mspacer#3}} 
\newcommand{\Msup}[4]{#1_{#2\Mspacer#3}^{#4}} 

\newcommand{\Caputo}[2]{\left(\prescript{\ensuremath{\mathrm{C}}}{}{\ensuremath{\mathrm{D}}}_{0\Mspacer+}^{#1}#2\right)}
\newcommand{\CaputoSeq}[3]{\left(\prescript{\ensuremath{\mathrm{C}}}{}{\ensuremath{\mathrm{D}}}_{0\Mspacer+}^{#1\Mspacer#2}#3\right)}

\usepackage{scalerel}
\usepackage{stackengine}
\stackMath
\newcommand\reallywidecheck[1]{%
\savestack{\tmpbox}{\stretchto{%
  \scaleto{%
    \scalerel*[\widthof{\ensuremath{#1}}]{\kern-.6pt\bigwedge\kern-.6pt}%
    {\rule[-\textheight/2]{1ex}{\textheight}}
  }{\textheight}%
}{0.5ex}}%
\stackon[1pt]{#1}{\scalebox{-1}{\tmpbox}}%
}

\newcommand\reallywidehat[1]{%
\savestack{\tmpbox}{\stretchto{%
  \scaleto{%
    \scalerel*[\widthof{\ensuremath{#1}}]{\kern-.6pt\bigwedge\kern-.6pt}%
    {\rule[-\textheight/2]{1ex}{\textheight}}
  }{\textheight}%
}{0.5ex}}%
\stackon[1pt]{#1}{\tmpbox}%
}

\author{David A. Smith \& Wei Yang Toh}
\title{Linear evolution equations on the half line with dynamic boundary conditions}
\renewcommand{\runningtitle}{Dynamic BC. Half line}
\date{\today}

\begin{document}
\maketitle
\thispagestyle{fancy}

\begin{abstract}
    The classical half line Robin problem for the heat equation may be solved via a spatial Fourier transform method.
    In this work, we study the problem in which the static Robin condition $bq(0,t)+q_x(0,t)=0$ is replaced with a dynamic Robin condition; $b=b(t)$ is allowed to vary in time.
    Applications include convective heating by a corrosive liquid.
    We present a solution representation, and justify its validity, via an extension of the Fokas transform method.
    We show how to reduce the problem to a variable coefficient fractional linear ordinary differential equation for the Dirichlet boundary value.
    We implement the fractional Frobenius method to solve this equation, and justify that the error in the approximate solution of the original problem converges appropriately.
    We also demonstrate an argument for existence and unicity of solutions to the original dynamic Robin problem for the heat equation.
    Finally, we extend these results to linear evolution equations of arbitrary spatial order on the half line, with arbitrary linear dynamic boundary conditions.
\end{abstract}

\tableofcontents

\section{Introduction} \label{sec:Introduction}

We study initial dynamic boundary value problems (IdBVP) for linear evolution equations in $(1+1)$ dimensions on the half line, in which not only the boundary data, but the boundary forms themselves, are permitted to vary in time.
Specifically, we study problems of class
\begin{subequations} \label{eqn:IdBVP}
\begin{align}
    \label{eqn:IdBVP.PDE} \tag{\theparentequation.PDE}
    \left[ \partial_t + a (-\ri\partial_x)^n \right] q(x,t) &= 0 & (x,t) &\in (0,\infty)\times(0,T), \\
    \label{eqn:IdBVP.IC} \tag{\theparentequation.IC}
    q(x,0) &= q_0(x) & x &\in [0,\infty), \\
    \label{eqn:IdBVP.dBC} \tag{\theparentequation.dBC}
    \sum_{j=0}^{n-1} \M{b}{k}{j}(t) \partial_x^j q(0,t) &= h_k(t), & t &\in [0,T], \; k\in\{1,2,\ldots,N\},
\end{align}
\end{subequations}
in which $2\leq n\in\NN$, $a\in\CC$ with $\lvert a \rvert=1$ and $n,a,N$ obey the criteria
\begin{equation} \label{eqn:n-a-N-criteria}
    \begin{cases}
        \mbox{If } n \mbox{ even} &\mbox{then } \Re(a)\geq0 \mbox{ and } N=n/2. \\
        \mbox{If } n \mbox{ odd}  &\mbox{then } \Re(a)=0;
        \begin{cases}
            \mbox{if } a=\ri &\mbox{then } N = (n+1)/2, \\
            \mbox{if } a=-\ri &\mbox{then } N = (n-1)/2,
        \end{cases}
    \end{cases}
\end{equation}
the initial datum $q_0$ has sufficient decay, and the boundary data $h_k$, boundary coefficients $\M{b}{k}{j}$ and initial datum are sufficiently smooth and compatible in the sense that
\begin{equation} \label{eqn:CompatCond}
    \sum_{j=0}^{n-1} \M{b}{k}{j}(0) q^{(j)}_0(0) = h_k(0).
\end{equation}

Before studying the general theory of such IdBVP, we present the full solution for four examples of IdBVP for typical linear evolution equations: the heat equation, linear free space Schr\"{o}dinger equation (LS) and the linearised Korteweg de Vries equation (LKdV).

\begin{prob*}[Heat equation with a homogeneous dynamic Robin condition]
    We seek $q$ for which
    \begin{subequations} \label{eqn:HeatIdBVP}
    \begin{align}
        \label{eqn:HeatIdBVP.PDE} \tag{\theparentequation.PDE}
        \left[ \partial_t - \partial_x^2 \right] q(x,t) &= 0 & (x,t) &\in (0,\infty)\times(0,T), \\
        \label{eqn:HeatIdBVP.IC} \tag{\theparentequation.IC}
        q(x,0) &= q_0(x) & x &\in [0,\infty), \\
        \label{eqn:HeatIdBVP.dBC} \tag{\theparentequation.dBC}
        b(t) q(0,t) + q_x(0,t) &= 0, & t &\in [0,T],
    \end{align}
    \end{subequations}
    where $q_0$ and $b$ are sufficiently smooth and $b(0)q_0(0)+q'_0(0)=0$.
    Here $n=2$, $a=1$ and $N=1$.
\end{prob*}

\begin{prob*}[LS with an inhomogeneous dynamic Robin condition]
    We seek $q$ for which
    \begin{subequations} \label{eqn:LSIdBVP}
    \begin{align}
        \label{eqn:LSIdBVP.PDE} \tag{\theparentequation.PDE}
        \left[ \partial_t - \ri\partial_x^2 \right] q(x,t) &= 0 & (x,t) &\in (0,\infty)\times(0,T), \\
        \label{eqn:LSIdBVP.IC} \tag{\theparentequation.IC}
        q(x,0) &= q_0(x) & x &\in [0,\infty), \\
        \label{eqn:LSIdBVP.dBC} \tag{\theparentequation.dBC}
        b(t) q(0,t) + q_x(0,t) &= h(t), & t &\in [0,T],
    \end{align}
    \end{subequations}
    where $q_0,b,h$ are sufficiently smooth and $b(0)q_0(0)+q'_0(0)=h(0)$.
    Here $n=2$, $a=\ri$ and $N=1$.
\end{prob*}

\begin{prob*}[LKdV with one dynamic boundary condition]
    We seek $q$ for which
    \begin{subequations} \label{eqn:LKdV1IdBVP}
    \begin{align}
        \label{eqn:LKdV1IdBVP.PDE} \tag{\theparentequation.PDE}
        \left[ \partial_t + \partial_x^3 \right] q(x,t) &= 0 & (x,t) &\in (0,\infty)\times(0,T), \\
        \label{eqn:LKdV1IdBVP.IC} \tag{\theparentequation.IC}
        q(x,0) &= q_0(x) & x &\in [0,\infty), \\
        \label{eqn:LKdV1IdBVP.dBC} \tag{\theparentequation.dBC}
        b(t) q(0,t) + q_{xx}(0,t) &= 0, & t &\in [0,T],
    \end{align}
    \end{subequations}
    where $q_0$ and $b$ are sufficiently smooth and $b(0)q_0(0)+q''_0(0)=0$.
    Here $n=3$, $a=-\ri$ and $N=1$.
\end{prob*}

\begin{prob*}[LKdV with two dynamic boundary conditions]
    We seek $q$ for which
    \begin{subequations} \label{eqn:LKdV2IdBVP}
    \begin{align}
        \label{eqn:LKdV2IdBVP.PDE} \tag{\theparentequation.PDE}
        \left[ \partial_t - \partial_x^3 \right] q(x,t) &= 0 & (x,t) &\in (0,\infty)\times(0,T), \\
        \label{eqn:LKdV2IdBVP.IC} \tag{\theparentequation.IC}
        q(x,0) &= q_0(x) & x &\in [0,\infty), \\
        \label{eqn:LKdV2IdBVP.dBC1} \tag{\theparentequation.dBC1}
        b(t) q(0,t) + q_x(0,t) &= 0, & t &\in [0,T], \\
        \label{eqn:LKdV2IdBVP.dBC2} \tag{\theparentequation.dBC2}
        \beta(t) q(0,t) + q_{xx}(0,t) &= 0, & t &\in [0,T],
    \end{align}
    \end{subequations}
    where $q_0,b,\beta$ are sufficiently smooth, $b(0)q_0(0)+q'_0(0)=0$ and $\beta(0)q_0(0)+q''_0(0)=0$.
    Here $n=3$, $a=\ri$ and $N=2$.
\end{prob*}

Problems such as~\eqref{eqn:HeatIdBVP} for the heat equation have received attention from the applied mathematics, physics and engineering communities, due to their applications, including geophysics~\cite{BBHMWW1983a}, nuclear reactor design~\cite{Hol1972a}, and Newtonian cooling in a bath of corrosive liquid~\cite{OM1974a,WYYZ2017a}.

In early works on these problems~\cite{IS1965a,IS1966a,Pos1970a}, the dynamic Robin boundary condition is replaced with a Neumann boundary condition by means of a change of variables.
This has the regrettable side effect of also introducing nonlinear terms to the partial differential equation.
The nonlinear terms are typically assumed small, and the resulting linearization is solved, though quantified justification for ignoring the nonlinear terms is rather complex.

One cannot reasonably expect classical spectral methods, based on separation of variables, to be successful for such problems, because the spectrum of the spatial differential operator necessarily depends upon time; it is unsurprising that some kind of approximate method is usually applied in place of a fully analytic approach.
Nevertheless, a finite interval version of problem~\eqref{eqn:HeatIdBVP} was famously studied by~\cite{OM1974a}, where there was an attempt at separation of variables, modified by the admission that in such a problem the eigenvalues and eigenfunctions necessarily depend upon time.
Unfortunately, the procedure reduced to an infinite system of ordinary differential equations, whose simultaneous solution is not clearly attainable.
Nor was the solution found for any approximation beyond the first order.

Other proposed solution methods for problems similar to~\eqref{eqn:HeatIdBVP} include the Laplace transform and finite difference approaches of~\cite{Koz1970a,BBHMWW1983a}, reduction to an integral equation which admits numerical analysis, as in~\cite{BSY2012a}, or reduction via shifting functions to a variable coefficient partial differential equation with static boundary conditions followed by series expansion, as per~\cite{CSHL2010a,LT2015a} and many other works citing these.
Related nonlinear problems, whose general analytic solution is unfeasible, are studied in~\cite{Moi2008a}, where Lie group techniques are used to find some classes of invariant solutions.

We aim not only to derive a solution representation, but also to verify analytically that our claimed solution representation describes a valid solution of the original problem.
The shifting functions approach of~\cite{LT2015a} has the advantage of being fully analytic but, because the variable coefficient partial differential equation is not in the standard Sturm Liouville form, the ``try functions'' (basis functions) used in the series expansions are not eigenfunctions of the differential operator at any particular time, much less at all times, necessitating a complicated $(x,t)$ dependence in the solution formula.
Equipped only with such a solution formula, it would be difficult to establish well posedness or convergence results beyond the verification of accuracy for fixed numbers of terms typically presented in such works.
Moreover, as demonstrated in~\cite{Jac1915a}, the completeness of the try functions cannot be expected to generalise to odd order problems such as~\eqref{eqn:LKdV1IdBVP} or~\eqref{eqn:LKdV2IdBVP}.
The latter weakness is also suffered by the approach of~\cite{OM1974a}.

Attempting to solve problems such as these via a temporal Laplace transform yields expressions involving root functions of the spectral variable.
When the problem is of higher spatial order and lower order terms also appear in the PDE, inverses of polynomial functions are required.
This can make inversion of the Laplace transform difficult.
A temporal transform also requires assumption that the solution exists for all positive time and a control on the solution for large time, both of which are unnatural assumptions for evolution equations such as those we study here.
Therefore, we avoid methods, such as those of~\cite{Koz1970a,BBHMWW1983a}, based on a temporal Laplace transform.
The approach of~\cite{BSY2012a} is to formulate a map, encoded as an integral equation, from the dynamic boundary condition to a Dirichlet boundary value.
Our approach begins in a similar spirit: though through different means, we also derive an integral equation for such a \emph{Data to uNknown map} (D to N map).
However, taking advantage of some analytic techniques, we are able to reexpress our integral equation in an alternative form for simpler analysis: a variable coefficient fractional linear ordinary differential equation, which we analyse via the fractional Frobenius method.

In this work, we present an extension to IdBVP of class~\eqref{eqn:IdBVP} of the Fokas transform method~\cite{Fok2008a,DTV2014a,FP2015a}, also known as the unified transform method.
This method began as an inverse scattering technique suitable for solving integrable nonlinear evolution equations on domains with a boundary~\cite{Fok2002b,FIS2005a}.
It was soon observed that the method also provided new results for linear evolution equations, particularly those of high spatial order with difficult boundary conditions.
It has been used to establish well posedness and obtain solution representations for IBVP with arbitrary static boundary forms on the half line and finite interval~\cite{FP2001a,Fok2002a,Smi2012a}, and to analyse problems with conditions specified at an interface~\cite{DPS2014a,DS2015a,SS2015a,DSS2016a,DS2020a,STV2019a}, multipoint conditions~\cite{FM2013a,PS2018a}, fully nonlocal conditions~\cite{MS2018a}, and problems in two spatial dimensions~\cite{Fok2002a,FM2015a}.
It has also provided new insights into the spectral theory of spatial differential operators~\cite{Pel2004a,PS2013a,Smi2015a,FS2016a,PS2016a,ABS2020a}, including many with boundary conditions not open to study under the Birkhoff regularity framework.
The method has been extended to admit elliptic equations on nonseparable domains in 2 or 3 dimensions~\cite{Fok2001a,FS2012a,Ash2012a,FK2014a,Cro2015a,CL2018b}, where there has been significant recent progress on numerics~\cite{Col2020a,CFF2018a,CFH2019a}.
Nonlinear elliptic equations have also been studied~\cite{PP2010a,FL2011a,FLP2013a}.

We give an overview of the Fokas transform method in \S\ref{sec:UTM}, both its application to half line IBVP for linear evolution equations with static boundary forms and the necessary extensions for IdBVP~\eqref{eqn:IdBVP}.
In an IBVP, the D to N map reduces to a linear system, solvable by Cramer's rule, but the corresponding reduction for IdBVP does not yield an equivalent linear system.
Instead, we derive a variable coefficient fractional linear ordinary differential equation, or system thereof.

Fortunately for our purposes, there exists an established Frobenius method for variable coefficient fractional linear ordinary differential equations.
We use the approach described in~\cite[\S7.5]{KST2006a} for equations involving (sequential) Caputo fractional derivatives however, for simplicity, we consider only ordinary points of the fractional differential equations we study.
When working with fractional derivatives, it is important to select the appropriate fractional derivative for the task.
Because our fractional derivative appears first disguised within a Fourier transform, we could have chosen any of the various established inequivalent definitions that have similar Fourier transforms.
For reasons discussed below, we prefer the Caputo definition of the fractional derivative and, where necessary, use the sequential Caputo derivative, rather then Caputo derivatives of mixed orders.

Recently, there has been some interest in the interaction between the Fokas transform method and fractional derivatives.
In~\cite{BFF2018a}, the method is extended to admit linear evolution equations of fractional spatial order, in terms of the Riemann-Liouville fractional differential operator.
The approach and results of~\cite{BFF2018a} are completely different from ours because problem~\eqref{eqn:IdBVP} features only spatial differential operators of integer order, while in~\cite{BFF2018a} the boundary forms are static.

More cognate of the present work is~\cite{GPV2019a}, in which the authors express the Maxey-Riley equation with Basset history term as a constant coefficient fractional linear ordinary differential equation, then formulate the latter as the Dirichlet to Neumann map for the heat equation.
Using the Fokas transform method, they solve the D to N map, and thus the Maxey-Riley equation.
Their procedure, expressing a fractional linear ordinary differential equation as a D to N map, is the antithesis of our technique, in which we express a D to N map as a fractional differential equation.
The relative value of each method depends upon the difficulties of solving the problem in its original and transformed modes.
Many of the resultant IBVP in~\cite{GPV2019a} are modified (oblique) Robin problems, so they can be solved using established analytical techniques for the Fokas transform method, although they also present a numerical method to solve the nonlinear modified Robin problems that can occur.
In contrast, there is no known analytical method for solving the IdBVP studied in the present work.
Therefore, we prefer the improved efficiency that may be afforded by an approximation method in one dimension than two; we prefer to reduce our IdBVP in space and time first to their D to N maps and then to fractional linear ordinary differential equations in time only.

\subsection*{Layout of paper}

In \S\ref{sec:UTM} we give a brief overview of the Fokas transform method as applied to IBVP with static boundary forms for evolution equations on the half line, then describe the extension to admit the dynamic boundary forms of present interest.
This section also includes a general implementation of the parts of the Fokas transform method that remain unchanged in the new context, summarised in proposition~\ref{prop:SolRep2valid}.
We conclude \S\ref{sec:UTM} with a lemma that will not be used until the following stage of the method, but is simple enough to prove in general.

In \S\ref{sec:EgHeat} we present the solution of example problem~\eqref{eqn:HeatIdBVP}.
We continue with solutions for problems~\eqref{eqn:LSIdBVP}--\eqref{eqn:LKdV2IdBVP} in \S\ref{sec:EgLSLKdV}.
For these sections, we eschew the general approach of \S\ref{sec:UTM} in order to present simplified arguments appropriate for some examples.
There is some discussion of each approach and its applicability beyond the problem considered, with the fullest discussion in \S\ref{sec:EgHeat}.

The arguments already presented for specific examples are generalised in \S\ref{sec:General} to the full class of IdBVP~\eqref{eqn:IdBVP}.
We conclude with some remarks on further generalisations in \S\ref{sec:Conclusion}.
For convenience, we also provide the brief appendical \S\ref{sec:CaputoFourier} on the interaction of the Fourier transform with the Caputo fractional differential operators of interest.

\section{Half line Fokas transform method} \label{sec:UTM}

\subsection{Overview for IBVP and extensions for IdBVP}

The Fokas transform method may be seen as an extension of the Fourier transform method to a much wider class of boundary value probelms.
Alternatively, it may be seen as a method for the derivation of a Fourier transform pair tailored to a particular IBVP.
Here, we outline the former view.
As described in~\cite{MS2018a}, the method consists of three stages.

In stage~1, one assumes that there exists a solution of the PDE satisfying the initial condition, and derives two equations that must be satisfied by that solution: the global relation and another equation known as the Ehrenpreis form.
The global relation is a linear equation linking integral transforms of values of the solution, and various derivatives, on the boundaries of the space time domain.
The Ehrenpreis form appears close to providing a representation of the solution, but it depends upon all of the boundary values, approximately twice as many as the maximum number that could be explicitly specified by boundary conditions in a well posed problem.
The first stage of the method does not depend upon the boundary conditions in any way, so it is not sensitive to whether we study an IBVP or IdBVP.
We implement stage~1 of the Fokas transform method for evolution equations on the half line in \S\ref{sec:UTM.Stage1}.

In stage~2 of the Fokas transform method, one continues under the assumptions of stage~1, but assumes further that $q$ satisfies the boundary conditions, and derives a \emph{Data to uNknown map} (D to N map) so that all boundary values can be represented in terms of only the boundary and initial data of the problem.
The main tool in this construction is the global relation, but it also requires some asymptotic and Jordan's lemma arguments to show that certain terms, which are not specified in terms of the data, do not contribute to the solution representation.
An attractive feature of the Fokas transform method for static IBVP is that the D to N map may be formulated entirely in spectral space, and as a linear system of dimension no greater than the spatial order of the PDE.
Unfortunately, the argument is complicated by dynamic boundary forms.
For problem~\eqref{eqn:IdBVP}, it is no longer possible to express the D to N map as a linear system, nor even to solve it entirely in spectral space.
However, by applying a Fourier transform to the global relation, we are able to reduce the D to N map to a variable coefficient sequential Caputo fractional linear ordinary differential equation, or a system of such equations.
In \S\S\ref{sec:EgHeat}--\ref{sec:General}, we describe this reformulation of the D to N map and, where the established general theory of such objects permits, solve the resultant equations.
Substituting the solution into the Ehrenpreis form yields an effective representation of the solution to the IdBVP.
Because we solve the fractional differential equation using an iterative method, this statement requires some justification.
We provide the full argument in theorem~\ref{thm:Heat.Convergence} for IdBVP~\eqref{eqn:HeatIdBVP}, theorem~\ref{thm:LS.Convergence} for IdBVP~\eqref{eqn:LSIdBVP}, theorem~\ref{thm:LKdV1.Convergence} for IdBVP~\eqref{eqn:LKdV1IdBVP}, theorem~\ref{thm:LKdV2.Convergence} for IdBVP~\eqref{eqn:LKdV2IdBVP}, and theorem~\ref{thm:General.convergence} for the general IdBVP~\eqref{eqn:IdBVP}.

In stages~1 and~2, we work under the assumption of existence of a solution and, eventually, obtain an explicit formula for that solution.
Therefore, we also prove unicity of this solution, under the assumption of its existence.
Stage~3 of the Fokas transform method justifies that existence assumption.
Typically, for a static IBVP, one treats the solution representation obtained in stage~2 as an ansatz of irrelevant providence, and shows directly that a function defined by that formula satisfies the original IBVP.
The existence result then bootstraps the whole argument.
In the present work, for efficincy of presentation, we separate the arguments of stage~3 into two parts.
The proofs that the PDE and initial condition hold are presented in general in proposition~\ref{prop:SolRep2valid}.
Satisfaction of the dynamic boundary conditions is established in theorems~\ref{thm:HeatStage3} for IdBVP~\eqref{eqn:HeatIdBVP} and~\ref{thm:General.Stage3} for the general IdBVP~\eqref{eqn:IdBVP}.

\subsection{Stage 1} \label{sec:UTM.Stage1}

Let the Schwartz space $\mathcal{S}[0,\infty)$ be the space of half line restrictions of smooth functions decaying rapidly along with all their derivatives.
Consider how the differential operator $(-i\, \mathrm{d} / \mathrm{d} x)^n:\mathcal{S}[0,\infty)\to\mathcal{S}[0,\infty)$ interacts with the half line complex Fourier transform
\[
    \hat{\phi}(\la) = \int_0^\infty\re^{-\ri\la x}\phi(x)\D x.
\]
Integration by parts $n$ times yields
\begin{equation}
    \reallywidehat{\left(-\ri \frac{\D}{\D x}\right)^n\phi}(\la) = \la^n\hat\phi(\la) - (-\ri)^n\sum_{j=0}^{n-1} (\ri \la)^{n-1-j} \phi^{(j)}(0).
\end{equation}

Suppose that $q:[0,\infty)\times[0,T]$ satisfies
\begin{subequations} \label{eqn:SpaceForq}
\begin{align}
    \forall\, t &\in[0,T], & q(\argdot,t) &\in \mathcal{S}[0,\infty), \\
    \forall\, x &\in[0,\infty),\,j\in\{0,1,\ldots,n-1\}, & \partial_x^jq(x,\argdot) &\in \mathrm{AC}[0,T],
\end{align}
\end{subequations}
in which $\mathrm{AC}[0,T]$ represents the space of functions absolutely continuous on the closed real interval $[0,T]$.
We apply the half line complex Fourier transform to equation~\eqref{eqn:IdBVP.PDE} for a temporal ordinary differential equation in $\hat{q}(\la;\argdot)$.
Solving the differential equation with initial condition the half line complex Fourier transform of equation~\eqref{eqn:IdBVP.IC}, we obtain the \emph{global relation}
\begin{equation} \label{eqn:GR}
    \hat{q}_0(\la) - \re^{a\la^nt}\hat{q}(\la;t) = \sum_{j=0}^{n-1} c_j(\la) f_j(\la;t),
\end{equation}
valid for all $t\in[0,T]$ and all $\la\in\clos(\CC^-)$,
where
\begin{align}
    c_j(\la) &= \frac{-a\la^n}{(\ri\la)^{j+1}}, \\
    f_j(\la;t) &= \int_0^t \re^{a\la^ns} \partial_x^jq(0,s) \D s.
\end{align}

Rearranging and applying an inverse Fourier transform to global relation~\eqref{eqn:GR}, we obtain solution representation
\begin{equation} \label{eqn:SolRep1}
    2\pi q(x,t) = \int_{-\infty}^\infty \re^{\ri\la x-a\la^nt} \hat{q}_0(\la) \D\la - \int_{-\infty}^\infty \re^{\ri\la x-a\la^nt} \sum_{j=0}^{n-1} c_j(\la) f_j(\la;t) \D\la.
\end{equation}

For any $R\geq0$, we define the sectorial domains
\begin{align}
    D_R &= \{ \la \in\CC^+ : \lvert\la\rvert>R \mbox{ and } \Re(a\la^n)<0 \}, \\
    E_R &= \CC^+ \setminus \clos(D_R).
\end{align}
Suppose that $\partial_x^jq(0,s)$ and its (temporal) derivative are both $L_1$ functions on $[0,T]$.
Integrating by parts, and applying the Riemann-Lebesgue lemma in the second statement,
\begin{align*}
    \left\lvert \re^{-a\la^nt}c_j(\la)f_j(\la) \right\rvert
    &= \left\lvert \frac{c_j(\la)}{a\la^n} \right\rvert \times \left\lvert \partial_x^jq(0,t) - \re^{-a\la^nt}\partial_x^jq(0,0) - \re^{-a\la^nt} \int_0^t \re^{a\la^ns}\frac{\D}{\D s}\left[ \partial_x^jq(0,s) \right] \D s \right\rvert \\
    &= \mathcal{O}\left(\lvert\la\rvert^{-(j+1)}\right),
\end{align*}
uniformly in $t$ and $\arg(\la)$, as $\la\to\infty$ within $\clos(E_R)$.
Therefore, by Jordan's lemma, for all $x>0$ and all $R\geq0$,
\begin{equation} \label{eqn:cdef1}
    \int_{\partial E_R} \re^{\ri\la x-a\la^nt} c_j(\la)f_j(\la;t)\D\la=0.
\end{equation}
A very similar argument (see the proof of proposition~\ref{prop:SolRep2valid}.\ref{itm:SolRep2valid.tauR} for the argument), but this time analysing limits within $\clos(D_R)$, yields that, for any $\tau\in[t,T]$,
\begin{equation} \label{eqn:cdef2}
    \int_{\partial D_R} \re^{\ri\la x-a\la^nt} c_j(\la)[f_j(\la;\tau)-f_j(\la;t)]\D\la=0.
\end{equation}
Applying equations~\eqref{eqn:cdef1} and~\eqref{eqn:cdef2} to equation~\eqref{eqn:SolRep1}, we find that, for all $x>0$ and all $t\in[0,T]$,
\begin{equation} \label{eqn:SolRep2}
    2\pi q(x,t) = \int_{-\infty}^\infty \re^{\ri\la x-a\la^nt} \hat{q}_0(\la) \D\la - \int_{\partial D_R} \re^{\ri\la x-a\la^nt} \sum_{j=0}^{n-1} c_j(\la) f_j(\la;\tau) \D\la,
\end{equation}
in which the constants $R\geq0$ and $\tau\in[t,T]$ are arbitrary.
Equation~\eqref{eqn:SolRep2} is often called the Ehrenpreis form.

\subsection{Validity of the solution representation}

\begin{ntn}
    The maps $\partial_x^jq(0,\argdot) \mapsto f_j(\argdot;t)$ are instances of the transform
    \begin{equation} \label{eqn:transformdefn}
        F[\phi](\la;t) = \int_0^t \re^{a\la^ns} \phi(s) \D s,
    \end{equation}
    in which $\phi = \partial_x^j q(0,\argdot)$.
    We introduce this $F$ notation because transform~\eqref{eqn:transformdefn} will be applied to a more general class of functions than just the boundary values.
\end{ntn}

We have shown that, under the assumption that there exists a function $q$ satisfying equations~\eqref{eqn:IdBVP.PDE}--\eqref{eqn:IdBVP.IC}, it must be that $q$ satisfies equation~\eqref{eqn:SolRep2}.
The following theorem establishes the reverse of this statement: if equation~\eqref{eqn:SolRep2} is taken as the definition of a function $q$, then $q$ also satisfies equations~\eqref{eqn:IdBVP.PDE}--\eqref{eqn:IdBVP.IC}.
This is part of stage~3 of the Fokas transform method.
Because we have not yet implemented the D to N map, nor considered the dynamic boundary condition~\eqref{eqn:IdBVP.dBC} at all, we are not yet ready to complete all of stage~3.
Nevertheless, because this part of the argument is relatively unchanged from the method for static IBVP, we present it early.

\begin{prop} \label{prop:SolRep2valid}
    Suppose that $q_0\in\mathcal{S}[0,\infty)$ and, for each $j\in\{0,1,\ldots,n-1\}$, the function $\phi_j\in C^\infty[0,T]$.
    Suppose further that these functions satisfy the compatibility conditions
    \[
        q_0^{(j)}(0) = \phi_j(0), \qquad j \in \{0,1,\ldots,n-1\}.
    \]
    Let $\Omega=[0,\infty)\times[0,T]$, and define $q:\Omega\to\CC$ by
    \begin{equation} \label{thm:SolRep2valid.defnq}
        q(x,t) = \frac{1}{2\pi} \int_{-\infty}^\infty \re^{\ri\la x-a\la^nt} \hat{q}_0(\la) \D\la - \frac{1}{2\pi} \int_{\partial D_R} \re^{\ri\la x-a\la^nt} \sum_{j=0}^{n-1} c_j(\la) F[\phi_j](\la;\tau) \D\la,
    \end{equation}
    in which the integrals should be understood as the joint principal value
    \[
        \lim_{M\to\infty} \left[
            \int_{-M}^M \ldots \D\la + \int_{\partial D_R} \chi_{B(0,M)}(\la) \ldots \D\la,
        \right]
    \]
    for $\chi_{B(0,M)}$ the indicator function of the ball centred at $0$ with radius $M$.
    Then
    \begin{enumerate}
        \item{
            \label{itm:SolRep2valid.Convergence}
            In equation~\eqref{thm:SolRep2valid.defnq}, both integrands may be multiplied by any polynomial in $\la$ of degree no greater than $n-1$, and the resulting integral converges uniformly on $\Omega$.
            If the polynomial is of degree $n$ then the resulting integral converges pointwise on $\Omega$ and locally uniformly in the interior of $\Omega$.
        }
        \item{
            \label{itm:SolRep2valid.tauR}
            In equation~\eqref{thm:SolRep2valid.defnq}, $R\geq0$ and $\tau\in[t,T]$ may be freely chosen without affecting the definition of $q$.
        }
        \item{
            \label{itm:SolRep2valid.PDE}
            $q$ satisfies partial differential equation~\eqref{eqn:IdBVP.PDE}.
        }
        \item{
            \label{itm:SolRep2valid.IC}
            $q$ satisfies initial condition~\eqref{eqn:IdBVP.IC}.
        }
    \end{enumerate}
\end{prop}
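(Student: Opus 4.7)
The plan is to establish (1) first, since the convergence rates it provides will be needed to justify the contour manipulations in (2) and the differentiation under the integral sign in (3); part (4) will then follow from (2) by a suitable choice of $\tau$.

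For (1), I would integrate by parts repeatedly in both integrands to extract algebraic-in-$\la$ expansions with fast-decaying remainders. In $\hat{q}_0(\la)$, integration by parts in $x$ gives $\hat{q}_0(\la)=\sum_{k=0}^{M-1}q_0^{(k)}(0)/(\ri\la)^{k+1}+(\ri\la)^{-M}\widehat{q_0^{(M)}}(\la)$, with the remainder inheriting the Schwartz decay of $q_0$. Integration by parts on $F[\phi_j](\la;\tau)$ in $s$ produces an analogous expansion in powers of $1/(a\la^n)$; the terms at $s=\tau$ combine with $\re^{-a\la^n t}$ as $\re^{a\la^n(\tau-t)}\phi_j^{(k)}(\tau)/(a\la^n)^{k+1}$, bounded on $\clos(D_R)$ because $\tau\geq t$ and $\Re(a\la^n)\leq 0$ there, while the $M$-th remainder is $O(|\la|^{-nM})$ uniformly. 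The boundary terms at $s=0$, once multiplied by $c_j(\la)$, yield leading pieces $\pm\phi_j(0)/(\ri\la)^{j+1}$ which, by the compatibility hypothesis $\phi_j(0)=q_0^{(j)}(0)$, coincide with the leading terms extracted from $\hat{q}_0$. Cauchy's theorem can then be used to deform these contour pieces down to $\RR$ through $\clos(E_R)$ (on which $\re^{-a\la^n t}$ is bounded and Jordan's lemma applies) so that they cancel against the real-line pieces; the remaining rational boundary contributions are handled by closing into $D_R$, where the integrand is holomorphic and Jordan's lemma applies again. What survives is controlled by the rapidly-decaying remainders, yielding uniform convergence on $\Omega$ for polynomial factors of degree $\leq n-1$, and pointwise convergence on $\Omega$ with locally uniform convergence on its interior when the degree is $n$.

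For (2), independence of $R$ will follow by applying Cauchy's theorem to the bounded region between $\partial D_{R_1}$ and $\partial D_{R_2}$, since the integrand is holomorphic in $\clos(\CC^+)$ away from the origin. Independence of $\tau\in[t,T]$ is essentially the argument sketched after~\eqref{eqn:cdef2}: write the difference $F[\phi_j](\la;\tau_1)-F[\phi_j](\la;\tau_2)=\int_{\tau_2}^{\tau_1}\re^{a\la^n s}\phi_j(s)\D s$, integrate by parts once to extract $O(1/\la^n)$ decay, use $|\re^{a\la^n(s-t)}|\leq 1$ on $\clos(D_R)$ (valid because $s\geq\min(\tau_1,\tau_2)\geq t$), close the contour in $D_R$ via an arc at infinity on which Jordan's lemma applies, and invoke Cauchy's theorem on the resulting closed contour. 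For (3), differentiation under the integral sign is justified on the open interior by the degree-$n$ convergence in (1); since the exponential kernel satisfies $[\partial_t+a(-\ri\partial_x)^n]\re^{\ri\la x-a\la^n t}=0$ identically, the PDE follows. For (4), since $0\in[t,T]$ when $t=0$, part (2) permits the choice $\tau=0$, which annihilates the contour integral because $F[\phi_j](\la;0)=0$; the remaining real-line integral at $t=0$ is the standard half-line Fourier inversion of $\hat{q}_0$, recovering $q_0(x)$.

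The main obstacle is (1): the terms in the integration-by-parts expansions of $\hat{q}_0$ and $F[\phi_j]$ are not separately integrable after multiplication by polynomials of the indicated degrees, and they live on different contours with different exponential weights. The key insight that resolves this is that the compatibility hypothesis $\phi_j(0)=q_0^{(j)}(0)$ provides exactly the matching needed for a Cauchy's-theorem cancellation between the two integrals; everything else reduces to the contour-deformation and Jordan's-lemma toolkit already familiar from the Fokas transform method for static IBVP.
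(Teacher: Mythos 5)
Your proposal is correct and follows essentially the same route as the paper: integration by parts in the inner integrals $\hat{q}_0$ and $F[\phi_j]$ with the compatibility conditions cancelling the non-decaying boundary terms for part (1), entirety of the contour integrand plus a Jordan's lemma estimate for part (2), differentiation under the integral for part (3), and the choice $\tau=0$ with Fourier inversion for part (4). The only cosmetic difference is in part (1), where the paper first changes variables $\la=\re^{\ri\theta}\sqrt[n]{-\ri\rho}$ to place the $\partial D_R$ integral on the real line and then closes with a Dirichlet test (degree $\leq n-1$) or the uniform convergence theorem of the cited reference (degree $n$), whereas you effect the same cancellation by deforming contours in place; your sketch is at the same level of detail as the paper's, which itself defers the full argument to the literature.
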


\begin{rmk}
    This proof has appeared before (see, for example,~\cite{Fok2008a}), so we do not give the full argument here.
    Much less regularity and compatibility of the data is necessary~\cite{BT2019a}, but the smoothness simplifies the argument.
\end{rmk}

\begin{proof}[Sketch proof of proposition~\ref{prop:SolRep2valid}.\ref{itm:SolRep2valid.Convergence}]
    We study expressions of the form
    \begin{equation}
        q(x,t) = \frac{1}{2\pi} \int_{-\infty}^\infty \re^{\ri\la x-a\la^nt} \la^r \hat{q}_0(\la) \D\la - \int_{\partial D_R} \re^{\ri\la x-a\la^nt} \la^r \sum_{j=0}^{n-1} c_j(\la) F[\phi_j](\la;\tau) \D\la,
    \end{equation}
    for $r\in\{0,1,\ldots,n\}$.
    Changing variables $\la=\re^{\ri\theta}\sqrt[n]{-\ri\rho}$, $\la^n=\ri\rho/a$ for appropriate choices of $\theta$ on each connected component of $\partial D_R$, the latter integral lies on the real line, except near $0$, where it is deformed slightly into the upper half plane.
    From here, the inner integrals $\hat{q}_0$ and $F[\phi_j]$ may be integrated by parts an appropriate number of times to cancel the $\la^r$ blow up; the resulting boundary terms cancel by the compatibility conditions.
    If $r<n$, a Dirichlet test justifies uniform convergence.
    If $r=n$ then, the compatibility conditions exhausted, one appeals to the uniform convergence theorem of~\cite{Sta1973a}.
\end{proof}

\begin{rmk}
    It is a corollary of proposition~\ref{prop:SolRep2valid}.\ref{itm:SolRep2valid.Convergence} that $q$ achieves its $t\to0^+$ initial time limit and $q$ and its first $n-1$ spatial partial derivatives achieve their $x\to0^+$ boundary values.
\end{rmk}

\begin{proof}[Proof of proposition~\ref{prop:SolRep2valid}.\ref{itm:SolRep2valid.tauR}]
    Considering each $\phi_j$ as a compactly supported absolutely integrable function on the real line, its full line Fourier transform is an entire function.
    But $F[\phi_j](\la)$ is nothing more than a composition of a power function with the Fourier transform.
    Hence each integrand in equation~\eqref{thm:SolRep2valid.defnq} is entire.
    Therefore, the latter integral may be deformed over any finite region without affecting its value; the solution representation is independent of $R$.
    
    Integrating by parts, and applying the Riemann-Lebesgue lemma in the second statement,
    \begin{multline*}
        \left\lvert \re^{-a\la^nt} c_j(\la) \big(F[\phi_j](\la;\tau) - F[\phi_j](\la;t)\big) \right\rvert \\
        = \left\lvert \frac{c_j(\la)}{a\la^n} \right\rvert \times \left\lvert \re^{a\la^n(\tau-t)}\phi_j(\tau) - \phi_j(t) - \re^{-a\la^nt} \int_t^\tau \re^{a\la^ns}\phi'_j(s) \D s \right\rvert \\
        = \mathcal{O}\left(\lvert\la\rvert^{-(j+1)}\right),
    \end{multline*}
    uniformly in $t$ and $\arg(\la)$, as $\la\to\infty$ within $\clos(D_R)$.
    Therefore, by Jordan's lemma, for all $x>0$ and all $R\geq0$,
    \begin{equation*}
        \int_{\partial D_R} \re^{\ri\la x-a\la^nt} c_j(\la) \big(F[\phi_j](\la;\tau) - F[\phi_j](\la;t)\big) \D\la = 0.
    \end{equation*}
    Freedom of $\tau\in[t,T]$ follows.
\end{proof}

\begin{proof}[Proof of proposition~\ref{prop:SolRep2valid}.\ref{itm:SolRep2valid.PDE}]
    By proposition~\ref{prop:SolRep2valid}.\ref{itm:SolRep2valid.Convergence}, the relevant partial derivatives exist and are given by differentiating under the integrals.
    Partial differential equation~\eqref{eqn:IdBVP.PDE} follows immediately from the simple $(x,t)$ dependence of the integrands.
\end{proof}

\begin{proof}[Proof of proposition~\ref{prop:SolRep2valid}.\ref{itm:SolRep2valid.IC}]
    By proposition~\ref{prop:SolRep2valid}.\ref{itm:SolRep2valid.tauR}, we may select $R=0$ and $\tau=0$.
    The latter integral evaluates to $0$.
    Initial condition~\eqref{eqn:IdBVP.IC} now follows by the usual Fourier inversion theorem.
\end{proof}

The following proposition, proved in~\cite{FS1999a}, gives a characterisation of the kinds of sets of functions $\phi_j$ that can be chosen to ensure that the boundary values of $q$ are exactly the appropriate $\phi_j$.

\begin{prop} \label{prop:BVattained}
    Suppose all criteria of proposition~\eqref{prop:SolRep2valid} and that, for each $t\in[0,T]$, there exists some function $\gamma\in L_1[0,\infty)$ for which
    \begin{equation}
        \hat{q}_0(\la) - \re^{a\la^nt}\hat{\gamma}(\la) = \sum_{j=0}^{n-1} c_j(\la) F[\phi_j](\la).
    \end{equation}
    Then $\partial_x^jq(0,t)=\phi_j(t)$.
\end{prop}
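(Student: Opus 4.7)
The plan is to deduce the conclusion by combining two readings of the hypothesis. First, I substitute the rearranged hypothesis
\[
    \hat{q}_0(\la) = \re^{a\la^n t}\hat{\gamma}(\la) + \sum_{j=0}^{n-1} c_j(\la) F[\phi_j](\la;t)
\]
into the Ehrenpreis form~\eqref{thm:SolRep2valid.defnq} with $\tau = t$. The term $\re^{a\la^n t}\hat{\gamma}(\la)$ combines with the factor $\re^{-a\la^n t}$ to leave $\int_{-\infty}^\infty\re^{\ri\la x}\hat{\gamma}(\la)\D\la = 2\pi\gamma(x)$ for $x > 0$ by half line Fourier inversion. The remaining contribution $\re^{\ri\la x-a\la^n t}\sum_j c_j(\la) F[\phi_j](\la;t)$ appears once on $\RR$ and once on $\partial D_R$ with opposite signs; by the $\mathcal{O}(|\la|^{-(j+1)})$ estimate of the proof of Proposition~\ref{prop:SolRep2valid}.\ref{itm:SolRep2valid.tauR} together with Jordan's lemma, the real line contour is deformed through $E_R$ onto $\partial D_R$, so these two cancel. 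Hence $q(x,t) = \gamma(x)$ for every $x > 0$, and in particular $\partial_x^j q(0,t) = \gamma^{(j)}(0^+)$ for each $j$.

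Second, I would identify $\gamma^{(j)}(0^+)$ with $\phi_j(t)$ by asymptotic analysis of the hypothesis as $|\la|\to\infty$ in the sector of $\clos(\CC^-)$ on which $\Re(a\la^n) > 0$. Isolating $\hat{\gamma}$,
\[
    \hat{\gamma}(\la) = \re^{-a\la^n t}\hat{q}_0(\la) - \re^{-a\la^n t}\sum_{j=0}^{n-1} c_j(\la) F[\phi_j](\la;t).
\]
In the chosen sector $\re^{-a\la^n t}$ decays exponentially and $\hat{q}_0$ lies in Schwartz class, so the first term is smaller than any negative power of $|\la|$; for the second, integration by parts in $F[\phi_j](\la;t) = \int_0^t\re^{a\la^n s}\phi_j(s)\D s$ retains the $s=t$ boundary contribution as the only piece not killed by $\re^{-a\la^n t}$, giving $\re^{-a\la^n t}c_j(\la) F[\phi_j](\la;t) = -\phi_j(t)/(\ri\la)^{j+1} + \mathcal{O}(|\la|^{-(n+j+1)})$. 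Summing yields $\hat{\gamma}(\la) = \sum_{j=0}^{n-1}\phi_j(t)/(\ri\la)^{j+1} + \mathcal{O}(|\la|^{-n-1})$. Matching term by term against the classical integration-by-parts expansion $\hat{\gamma}(\la) \sim \sum_j\gamma^{(j)}(0^+)/(\ri\la)^{j+1}$ of a half line Fourier transform identifies $\gamma^{(j)}(0^+) = \phi_j(t)$ for each $j\in\{0,1,\ldots,n-1\}$.

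Combining the two steps yields $\partial_x^j q(0,t) = \gamma^{(j)}(0^+) = \phi_j(t)$, as required. The step I expect to be most delicate is the asymptotic matching: the hypothesis assumes only $\gamma\in L_1[0,\infty)$, so the regularity of $\gamma$ at the origin required to justify the integration-by-parts expansion of $\hat{\gamma}$ is not available a priori and must itself be bootstrapped from the right hand side expansion, which in turn demands uniform control of the error terms across the full sector and a Watson-type justification that the coefficients of a $1/\la$ asymptotic expansion genuinely determine the one-sided boundary Taylor data of $\gamma$.
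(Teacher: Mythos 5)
The paper does not actually prove this proposition; it is imported verbatim from~\cite{FS1999a}, so there is no internal argument to compare against. Your two-step strategy is the natural one and is essentially sound: use the hypothesis to identify $q(\argdot,t)$ with $\gamma$ on $(0,\infty)$, then read the boundary Taylor data of $\gamma$ off the large-$\la$ asymptotics of the hypothesis in a sector where $\re^{-a\la^nt}$ decays. One bookkeeping correction in Step 1: the cancellation of the real-line and $\partial D_R$ copies of $\sum_j c_j(\la)F[\phi_j](\la;t)$ is a deformation through $E_R$, so the estimate you need is the $\clos(E_R)$ bound displayed just before equation~\eqref{eqn:cdef1}, not the $\clos(D_R)$ bound from the proof of proposition~\ref{prop:SolRep2valid}.\ref{itm:SolRep2valid.tauR}; the computation is identical with $\phi_j$ in place of $\partial_x^jq(0,\argdot)$ (only $\phi_j,\phi_j'\in L_1$ is used), so nothing breaks, but you should invoke the right half of the plane.

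The delicate point you flag at the end is genuine, but it is already closed by your own Step 1, and the proof should say so rather than leave it as a Watson-lemma-shaped worry. Since $\gamma$ agrees on $(0,\infty)$ with $q(\argdot,t)$, which by proposition~\ref{prop:SolRep2valid}.\ref{itm:SolRep2valid.Convergence} and the remark following it is smooth up to $x=0$ with its first $n-1$ spatial derivatives attaining boundary values, $\gamma$ has a representative that is $C^n$ near the origin; the only obstruction to the $n$-fold integration by parts of $\hat{\gamma}$ is the unknown behaviour as $x\to\infty$. That is handled by a cutoff: write $\gamma=\chi\gamma+(1-\chi)\gamma$ with $\chi$ smooth, compactly supported and identically $1$ near $0$. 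On any ray lying strictly inside $\CC^-$ and strictly inside a sector where $\Re(a\la^n)>0$ (such rays exist for every admissible pair $(n,a)$, since $n\arg(\la)$ sweeps an interval of length at least $2\pi$ as $\arg(\la)$ runs over $(-\pi,0)$), the transform of $(1-\chi)\gamma$ is exponentially small because its integrand carries the factor $\re^{\Im(\la)x}$ with $x$ bounded away from $0$, while $\widehat{\chi\gamma}$ admits the full expansion $\sum_j\gamma^{(j)}(0^+)/(\ri\la)^{j+1}+o\left(\abs{\la}^{-n}\right)$. Matching against your right-hand expansion on such a ray then yields $\gamma^{(j)}(0^+)=\phi_j(t)$ for $t>0$, and $t=0$ is exactly the assumed compatibility condition. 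With these two repairs your argument is complete.
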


\subsection{A useful application of Jordan's lemma}

The following lemma is very similar to one of the main tools in stage~2 of the Fokas transform method for IBVP with static boundary forms.
Its application, as presented in \S\S\ref{sec:EgHeat}--\ref{sec:General}, for IdBVP is a little different, which permits a slight simplification from the usual form of the lemma.
In either case, it is used to remove terms involving Fourier transforms of $q(\argdot;T)$ from various equations.

\begin{lem} \label{lem:RemoveqT}
    Suppose $\phi$ and its derivative are $L_1$ integrable functions on $[0,\infty)$.
    If $2\leq n\in\NN$, $T>t$, and $\frac{-(2n-1)\pi}{2n} \leq \theta \leq \frac{-\pi}{2n}$, then
    \[
        \int_{-\infty}^\infty \re^{\ri\rho(T-t)} \hat{\phi}\left(\re^{\ri\theta}\sqrt[n]{-\ri\rho}\right) \D \rho = 0.
    \]
\end{lem}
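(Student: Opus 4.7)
\bigskip

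\noindent\textbf{Proof plan for lemma~\ref{lem:RemoveqT}.} The plan is to interpret the integral as a Cauchy-style contour integral in the $\rho$ variable over $\RR$, close the contour in the upper half $\rho$-plane (which is possible because $T-t>0$ guarantees exponential decay of $\re^{\ri\rho(T-t)}$ there), and appeal to Cauchy's theorem together with a Jordan-type estimate on the large semicircle. The substitution $\la = \re^{\ri\theta}\sqrt[n]{-\ri\rho}$ is just a change of variable in the statement, so conceptually I want to check that, under this map, the closed upper half $\rho$-plane is sent into a region where $\hat\phi$ is holomorphic and has enough decay.

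First I would fix a branch of $\sqrt[n]{\argdot}$ cut along the negative real axis, so that $\sqrt[n]{-\ri\rho}$ is holomorphic on $\CC$ minus the negative imaginary $\rho$-axis, in particular on the closed upper half $\rho$-plane. Writing $\rho = |\rho|\re^{\ri\psi}$ with $\psi\in[0,\pi]$, one computes $\arg \la \in [\theta - \pi/(2n),\theta+\pi/(2n)]$, and the hypothesis $\theta\in[-(2n-1)\pi/(2n),-\pi/(2n)]$ is exactly what forces this range to lie inside $[-\pi,0]$. Thus the closed upper half $\rho$-plane is mapped into the closed lower half $\la$-plane. Since $\phi$ is supported on $[0,\infty)$, its Fourier transform $\hat\phi$ is holomorphic on the open lower half $\la$-plane and continuous up to $\RR$, so $\rho\mapsto \hat\phi(\la(\rho))$ is holomorphic on the open upper half $\rho$-plane and continuous on its closure.

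Next I would establish decay. A single integration by parts, using $\phi\in L_1$ with $\phi(0)$ finite and $\phi'\in L_1$, gives $\hat\phi(\la) = \phi(0)/(\ri\la) + \hat{\phi'}(\la)/(\ri\la)$, and since $\hat{\phi'}$ is bounded by $\|\phi'\|_1$ on the closed lower half plane (and vanishes at infinity along $\RR$ by Riemann–Lebesgue), we obtain $\hat\phi(\la) = \bigoh(|\la|^{-1})$ uniformly as $|\la|\to\infty$ in the closed lower half plane. Translating back, $\hat\phi(\la(\rho)) = \bigoh(|\rho|^{-1/n})$ uniformly in $\arg\rho\in[0,\pi]$.

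Finally I would combine these ingredients in the standard Jordan's lemma estimate. On the semicircle $\rho = M\re^{\ri\psi}$, $\psi\in[0,\pi]$, the integrand is bounded in modulus by $C M^{-1/n}\re^{-M(T-t)\sin\psi}$, and the arclength factor $M$ combined with the classical bound $\int_0^\pi\re^{-M(T-t)\sin\psi}\D\psi \leq \pi/(M(T-t))$ gives a contribution of order $M^{-1/n}\to 0$ as $M\to\infty$. Cauchy's theorem on the region bounded by $[-M,M]$ and the semicircle then yields that the integral over $\RR$ vanishes. The main obstacle, such as it is, is the mild decay $|\la|^{-1}$, which is only borderline when combined with the $n$-th root scaling; this is precisely why the hypothesis that both $\phi$ and $\phi'$ lie in $L_1$ is invoked, to squeeze out the one extra factor of $|\la|^{-1}$ that Jordan's lemma needs.
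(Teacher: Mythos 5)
Your proposal is correct and follows essentially the same route as the paper's proof: the same branch-point and argument-range analysis showing the closed upper half $\rho$-plane maps into the closed lower half $\la$-plane where $\hat\phi$ is analytic, the same integration-by-parts decay estimate $\hat\phi(\la)=\bigoh\left(\abs{\la}^{-1}\right)$ giving $\bigoh\left(\abs{\rho}^{-1/n}\right)$, and the same closing of the contour via Jordan's lemma. You merely spell out the standard Jordan estimate that the paper leaves implicit.
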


\begin{proof}
    Because $\sqrt[n]{\argdot}$ is the principle branch of the $n$th root, the function $\sqrt[n]{-\ri\rho}$ has a branch cut only for $\rho\in-\ri[0,\infty)$ and is analytic elsewhere.
    Suppose $0 \leq \arg(\rho) \leq \pi$.
    Then $\frac{-\pi}{2n} \leq \arg(\sqrt[n]{-\ri\rho}) \leq \frac{\pi}{2n}$, so $-\pi \leq \arg\left(\re^{\ri\theta}\sqrt[n]{-\ri\rho}\right) \leq 0$.
    Therefore, the inner integral
    \[
        \hat{\phi}\left(\re^{\ri\theta}\sqrt[n]{-\ri\rho}\right) = \int_0^\infty \re^{-\ri\re^{\ri\theta}\sqrt[n]{-\ri\rho}x}\phi(x)\D x
    \]
    is continuous and analytic on the closed upper half plane, except at $0$ where it has a continuous branch point.
    Moreover, integration by parts and the Riemann-Lebesgue lemma imply that, as $\rho\to\infty$ from within $\clos(\CC^+)$,
    \[
        \left\lvert \hat{\phi}\left(\re^{\ri\theta}\sqrt[n]{-\ri\rho}\right) \right\rvert = \mathcal{O}\left(\lvert\rho\rvert^{-1/n}\right),
    \]
    uniformly in $\arg(\rho)$.
    The result follows by Jordan's lemma.
\end{proof}


\section{Example: heat equation} \label{sec:EgHeat}

For IdBVP~\eqref{eqn:HeatIdBVP}, the global relation~\eqref{eqn:GR} at final time $T$ simplifies to
\begin{equation} \label{eqn:Heat.GR}
    \hat{q}_0(\la) - \re^{\la^2T}\hat{q}(\la;T) = \ri\la f_0(\la;T) + f_1(\la;T),
\end{equation}
and Ehrenpreis form~\eqref{eqn:SolRep2} becomes
\begin{equation} \label{eqn:Heat.SolRep1}
    2\pi q(x,t) = \int_{-\infty}^\infty \re^{\ri\la x-\la^2t} \hat{q}_0(\la) \D\la - \int_{\partial D_R} \re^{\ri\la x-\la^2t} \left[ \ri\la f_0(\la;\tau) + f_1(\la;\tau) \right] \D\la.
\end{equation}
Applying transform~\eqref{eqn:transformdefn} to dynamic boundary condition~\eqref{eqn:HeatIdBVP.dBC},
we obtain, for all $t\in[0,T]$ and all $\la\in \CC$,
\begin{equation} \label{eqn:Heat.dBCtransformed}
    f_1(\la;t) = - F[b(\argdot)q(0,\argdot)](\la;t),
\end{equation}
so we can rewrite the solution representation as
\begin{equation} \label{eqn:Heat.SolRep2}
    2\pi q(x,t) = \int_{-\infty}^\infty \re^{\ri\la x-\la^2t} \hat{q}_0(\la) \D\la - \int_{\partial D_R} \re^{\ri\la x-\la^2t} F[(\ri\la-b(\argdot))q(0,\argdot)](\la;T) \D\la.
\end{equation}
Equation~\eqref{eqn:Heat.dBCtransformed} also allows us to simplify the global relation~\eqref{eqn:Heat.GR} to
\begin{equation} \label{eqn:Heat.GRSimplified}
    \hat{q}_0(\la) - \re^{\la^2T}\hat{q}(\la;T) = F[(\ri\la-b(\argdot))q(0,\argdot)](\la;T),
\end{equation}
valid for all $\la\in\clos(\CC^-)$.

\begin{figure}
    \centering
    \includegraphics{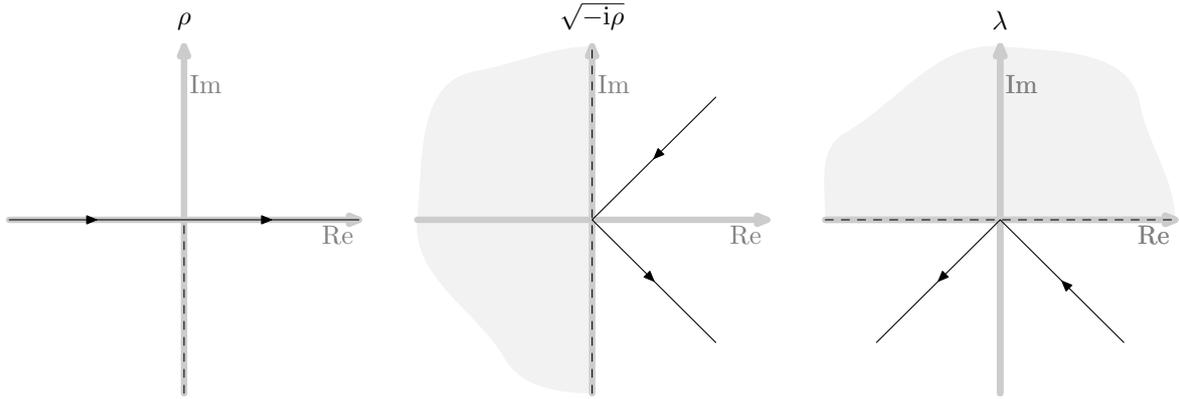}
    \caption{
        The map $\rho\mapsto\la=-\ri\sqrt{-\ri\rho}$ is biholomorphic on the unshaded region.
        The function defining the map has a branch cut along the negative imaginary half axis.
    }
    \label{fig:Heat.maps}
\end{figure}

Denoting by $\sqrt{\argdot}$ the principle branch of the square root function, we make the change of variables $\la=-\ri\sqrt{-\ri\rho}$, $\la^2=\ri\rho$, as depicted in figure~\ref{fig:Heat.maps}, in equation~\eqref{eqn:Heat.GRSimplified} to obtain
\begin{align} \notag
    \hat{q}_0\left(-\ri\sqrt{-\ri\rho}\right) - \re^{\ri\rho T}\hat{q}\left(-\ri\sqrt{-\ri\rho};T\right) &= F\left[\left(\sqrt{-\ri\rho}-b(\argdot)\right)q(0,\argdot)\right]\left(-\ri\sqrt{-\ri\rho};T\right) \\
    &= \int_0^T \re^{\ri\rho s} \left(\sqrt{-\ri\rho}-b(s)\right)q(0,s) \D s,
    \label{eqn:Heat.GRChanged}
\end{align}
valid for all $\rho\in\CC$, with a cut along $-\ri[0,\infty)$, and a square root branch point at $0$.
Note that the right of this equation is a temporal Fourier transform (under definition~\ref{defn:FTforCaputo}) of the function $\left(\sqrt{-\ri\rho}-b(\argdot)\right)q(0,\argdot)$ supported on $[0,T]$.

The multiplier $\sqrt{-\ri\rho}$ of a Fourier transform type integral, appearing on the right of equation~\eqref{eqn:Heat.GRChanged}, is redolent of the formula for Fourier transforms of derivatives, except that this appears to represent a half derivative.
Indeed, by proposition~\ref{prop:FTCaputoFiniteInterval},
\begin{multline} \label{eqn:Heat.GRChanged2}
    \hat{q}_0\left(-\ri\sqrt{-\ri\rho}\right) - \re^{\ri\rho T}\hat{q}\left(-\ri\sqrt{-\ri\rho};T\right) \\
    = \int_0^T \re^{\ri\rho s} \left[ \Caputo{1/2}{q(0,\argdot)}(s) - b(s)q(0,s) \right] \D s + \frac{1}{\sqrt{-\ri\rho}}\left( q(0,0) - q(0,T)\re^{\ri\rho T} \right),
\end{multline}
for $\prescript{\ensuremath{\mathrm{C}}}{}{\ensuremath{\mathrm{D}}}_{0\Mspacer+}^{1/2}$ the Caputo half derivative operator of definition~\ref{defn:Caputo}.
Let $\Gamma_R$ be the contour following the real line in the increasing sense, except perturbed away from $0$ into $\CC^+$ along a circular arc of radius $R$.
Then, multiplying by $\re^{-\ri\rho t}$ and integrating each side of equation~\eqref{eqn:Heat.GRChanged2} in $\rho$ along $\Gamma_R$, it is immediate from Jordan's lemma that the final term on the right side evaluates to $0$.
Taking the limit as $R\to0$, and applying the Fourier inversion theorem for the Fourier transform of definition~\ref{defn:FTforCaputo}, we obtain
\begin{equation} \label{eqn:Heat.FLODEqT}
    \Caputo{1/2}{y}(t) - b(t)y(t) = \frac{1}{2\pi} \int_{-\infty}^\infty \re^{-\ri\rho t} \left[ \hat{q}_0\left(-\ri\sqrt{-\ri\rho}\right) + \frac{q_0(0)}{\sqrt{-\ri\rho}} - \re^{\ri\rho T}\hat{q}\left(-\ri\sqrt{-\ri\rho};T\right) \right] \D\rho,
\end{equation}
where $y(t) = q(0,t)$.
By lemma~\ref{lem:RemoveqT} with $\theta=-\pi/2$, equation~\eqref{eqn:Heat.FLODEqT} simplifies to
\begin{equation} \label{eqn:Heat.FLODE}
    \Caputo{1/2}{y}(t) - b(t)y(t) = \frac{1}{2\pi} \int_{-\infty}^\infty \re^{-\ri\rho t} \left[ \hat{q}_0\left(-\ri\sqrt{-\ri\rho}\right) + \frac{q_0(0)}{\sqrt{-\ri\rho}} \right] \D\rho = : g(t),
\end{equation}
in which the new datum $g$ has been defined as this unusual double Fourier transform of the initial datum.

Equation~\eqref{eqn:Heat.FLODE} is an inhomogeneous variable coefficient Caputo fractional linear ordinary differential equation.
The theory of such equations is presented in~\cite[\S7.5]{KST2006a} and~\cite{Use2008a}.
We proceed under the assumption that both data $g$ and $b$ are $\frac{1}{2}$ analytic about the $\frac{1}{2}$ ordinary point $0$ with radius $R$, so that
\begin{equation} \label{eqn:Heat.defnBG}
    b(t) = \sum_{u=0}^\infty B_u t^{u/2}, \qquad \qquad g(t) = \sum_{u=0}^\infty G_u t^{u/2}.
\end{equation}
In practice, these coefficients may be calculated by successive application of the Caputo fractional derivative operator~\cite{Use2008a}.
The existence of a solution $y$, $\frac{1}{2}$ analytic at $0$ with radius $R$, is guaranteed by~\cite[theorem~7.16]{KST2006a}. So we seek a series solution of the form
\begin{subequations} \label{eqn:Heat.FLODE.Solution}
\begin{equation}
    q(0,t) = y(t) = \sum_{u=0}^\infty Y_u t^{u/2}.
\end{equation}
By~\cite[remark~7.2]{KST2006a},
\[
    \Caputo{1/2}{y}(t) = \sum_{u=1}^\infty Y_u \frac{\Gamma(u/2+1)}{\Gamma((u-1)/2+1)} t^{(u-1)/2}.
\]
Substituting into equation~\eqref{eqn:Heat.FLODE}, we obtain the recurrence relation
\begin{equation} \label{eqn:Heat.FLODE.Solution.Coeffs}
    Y_{u+1} = \frac{\Gamma((u+2)/2)}{\Gamma((u+3)/2)} \left( G_u + \sum_{v=0}^u Y_vB_{u-v} \right),
\end{equation}
\end{subequations}
and $Y_0=q_0(0)$ because $\lim_{t\to0} q(0,t)=q_0(0)$.

The following theorem establishes that equations~\eqref{eqn:Heat.SolRep2} and~\eqref{eqn:Heat.FLODE.Solution} provide a valid solution to IdBVP~\eqref{eqn:HeatIdBVP}, and the solution representation obtained is effective.

\begin{thm} \label{thm:Heat.Convergence}
    Suppose that $q_0\in\mathcal{S}[0,\infty)$.
    Suppose further that $b$ and $g$, the latter given by the definition on the right of equations~\eqref{eqn:Heat.FLODE}, are $\frac{1}{2}$ analytic functions at $0$, with $\frac{1}{2}$ power series given by equations~\eqref{eqn:Heat.defnBG}, with radii of convergence both strictly greater than $T$.
    For $U\in\NN$, let
    \[
        y_U(t) = \sum_{u=0}^U Y_u t^{u/2},
    \]
    in which $Y_0=q_0(0)$ and, for all integer $u\geq0$, $Y_{u+1}$ is given by relation~\eqref{eqn:Heat.FLODE.Solution.Coeffs}.
    For $U\in\NN$ and $(x,t)\in[0,\infty)\times[0,T]$, let
    \[
        q_U(x,t) = \frac{1}{2\pi} \int_{-\infty}^\infty \re^{\ri\la x-\la^2t} \hat{q}_0(\la) \D\la - \frac{1}{2\pi} \int_{\partial D_R} \re^{\ri\la x-\la^2t} F[(\ri\la-b(\argdot))y_U(\argdot)](\la;T) \D\la.
    \]
    
    Then
    \begin{enumerate}
        \item{
            For each $U\in\NN$, the formula defining $q_U(x,t)$ is given by integrals that converge uniformly in $(x,t)\in[0,\infty)\times[0,T]$; differentiating under the integral yields a formula for $\partial_xq_U(x,t)$, also in terms of integrals uniformly convergent in $(x,t)\in[0,\infty)\times[0,T]$.
        }
        \item{
            For all $U\in\NN$, $q_U$ satisfies equation~\eqref{eqn:HeatIdBVP.PDE}.
        }
        \item{
            For all $U\in\NN$, $q_U$ satisfies equation~\eqref{eqn:HeatIdBVP.IC}.
        }
        \item{
            If $q$ is a solution of IdBVP~\eqref{eqn:HeatIdBVP} then $q$ is unique among functions satisfying~\eqref{eqn:SpaceForq} and the criteria of this theorem, and, uniformly in $(x,t)\in[0,\infty)\times[0,T]$,
            \[
                \lim_{U\to\infty} q_U(x,t) = q(x,t).
            \]
        }
    \end{enumerate}
\end{thm}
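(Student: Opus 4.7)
For parts (1)--(3), I would apply Proposition~\ref{prop:SolRep2valid} with $\phi_0 := y_U$ and $\phi_1 := -b\,y_U$. The compatibility hypotheses are immediate: $\phi_0(0) = Y_0 = q_0(0)$ by construction of the recurrence, and $\phi_1(0) = -b(0)q_0(0) = q_0'(0)$ by the compatibility assumed in IdBVP~\eqref{eqn:HeatIdBVP}. Each $y_U$ is a finite linear combination of monomials $t^{u/2}$, so although $y_U \notin C^\infty[0,T]$, each transform $F[t^{u/2}](\la;T)$ is expressible via the (incomplete) error function, and the integration-by-parts argument underpinning Proposition~\ref{prop:SolRep2valid}.\ref{itm:SolRep2valid.Convergence} extends termwise. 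The Proposition then delivers uniform convergence of the integrals defining $q_U$ and $\partial_x q_U$, together with satisfaction of equations~\eqref{eqn:HeatIdBVP.PDE} and~\eqref{eqn:HeatIdBVP.IC}.

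For part~(4), I would handle uniqueness before convergence. Suppose $q$ is any solution of IdBVP~\eqref{eqn:HeatIdBVP} in the space~\eqref{eqn:SpaceForq}. The derivation of \S\ref{sec:EgHeat} up to FLODE~\eqref{eqn:Heat.FLODE} uses only the PDE, the initial datum, the dynamic boundary condition, and the decay~\eqref{eqn:SpaceForq}, so it applies to this $q$: hence $y := q(0,\argdot)$ solves~\eqref{eqn:Heat.FLODE} with the theorem's datum $g$, subject to $y(0) = q_0(0)$. By~\cite[theorem~7.16]{KST2006a}, there is exactly one $\frac{1}{2}$-analytic solution of~\eqref{eqn:Heat.FLODE} at the ordinary point $0$ with that initial value, namely the one whose $\frac{1}{2}$ power series coefficients are generated by~\eqref{eqn:Heat.FLODE.Solution.Coeffs}. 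Substituting this $y$ into the Ehrenpreis form~\eqref{eqn:Heat.SolRep2} then determines $q$ uniquely on $\Omega$.

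For the convergence statement, I would write
\begin{equation*}
    q(x,t) - q_U(x,t) = -\frac{1}{2\pi} \int_{\partial D_R} \re^{\ri\la x - \la^2 t}\, F[(\ri\la - b(\argdot))(y - y_U)(\argdot)](\la;T)\, \D\la,
\end{equation*}
split the inner integrand into $\ri\la\, F[y - y_U]$ and $-F[b(y - y_U)]$ pieces, and apply the Caputo half-derivative integration-by-parts identity of Appendix~\ref{sec:CaputoFourier} to the first piece, exactly as in the passage from~\eqref{eqn:Heat.GRChanged} to~\eqref{eqn:Heat.GRChanged2}. The resulting $s=0$ boundary term vanishes because $(y - y_U)(0) = 0$, while the $s=T$ boundary term contributes an integral of the type killed by Lemma~\ref{lem:RemoveqT}. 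A further classical integration by parts then delivers an $O(|\la|^{-2})$ pointwise bound on $\partial D_R$, integrable in $\la$ and dominated by a constant multiple of $\|y - y_U\|_{L^\infty[0,T]} + \|\Caputo{1/2}{(y - y_U)}\|_{L^\infty[0,T]}$, uniformly in $(x,t) \in [0,\infty)\times[0,T]$. Both norms vanish as $U \to \infty$, because $y$ and $\Caputo{1/2}{y}$ each admit $\frac{1}{2}$ power series on $[0,T]$ that converge absolutely and uniformly (with $y_U$ and $\Caputo{1/2}{y_U}$ as the corresponding partial sums), using the assumption that the radius of convergence exceeds $T$. Dominated convergence then yields $q_U \to q$ uniformly on $[0,\infty) \times [0,T]$.

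The main obstacle I anticipate is securing the pointwise $\la$-decay estimate uniformly in $U$, given that $y - y_U$ fails to be differentiable at $0$ in the classical sense: a naive integration by parts would introduce a boundary term proportional to $(y - y_U)'(0)$, which is singular. Replacing it by the Caputo fractional integration by parts of Appendix~\ref{sec:CaputoFourier} is exactly what allows the $s=0$ boundary contribution to vanish and reduces the problem to a norm bound on $\Caputo{1/2}{(y - y_U)}$, which is again controlled by the tail of an absolutely convergent $\frac{1}{2}$ power series.
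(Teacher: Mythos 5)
Parts (1)--(3) and the unicity claim in (4) follow the paper's route exactly (proposition~\ref{prop:SolRep2valid} plus \cite[theorem~7.16]{KST2006a}), and your observation that $y_U\notin C^\infty[0,T]$ but the proposition extends termwise to monomials $t^{u/2}$ is a point the paper glosses over. For the convergence claim, however, you diverge from the paper, and your key estimate has a gap. You want a pointwise bound $\bigoh(\abs{\la}^{-2})$ on $\partial D_R$ with constant controlled by $\lVert y-y_U\rVert_\infty+\lVert\Caputo{1/2}{(y-y_U)}\rVert_\infty$, so that dominated convergence applies uniformly in $(x,t)$. That control is not achievable from those two norms. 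Writing $R_U=y-y_U$ and $\rho=-\ri\la^2$, the Caputo step leaves you with $\mathcal{F}\bigl[\Caputo{1/2}{R_U}\bigr](\rho)\big/\sqrt{-\ri\rho}$ plus Jordan-killable boundary terms; the prefactor contributes only $\abs{\la}^{-1}$, which is not integrable along the rays, and the Fourier transform itself is merely $\bigoh(1)$ under a sup-norm hypothesis. The ``further classical integration by parts'' that upgrades this to $\bigoh(\abs{\rho}^{-1})=\bigoh(\abs{\la}^{-2})$ necessarily produces $\tfrac{\D}{\D s}\Caputo{1/2}{R_U}$ inside the integral, so the constant in your bound involves $\lVert(\Caputo{1/2}{R_U})'\rVert_{L_1[0,T]}$ --- essentially a $\tfrac32$-order derivative of the tail --- not the two norms you list. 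The argument is salvageable, because termwise differentiation of a $\tfrac12$ power series preserves the radius of convergence and hence this extra quantity also tends to $0$ (for $U\geq2$, say), but that verification is precisely the missing step. A second, smaller issue: your splitting discards boundary terms such as $R_U(T)\re^{\ri\rho(T-t)}/\sqrt{-\ri\rho}$ by Jordan's lemma, which requires $T>t$; at $t=T$ (and $x=0$) the discarded piece is individually divergent, so the decomposition must be justified separately there, e.g.\ by continuity.

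For contrast, the paper avoids pointwise $\la$-decay altogether: after the change of variables the Jacobian factor $1/(2\sqrt{-\ri\rho})$ is absorbed into a Riemann--Liouville half \emph{integral} (proposition~\ref{prop:FTRLFracInt}), yielding $E_U(x,t)=(\Phi(\argdot;x)*\mathcal{R}_U)(t)$ with $\mathcal{R}_U=-\tfrac12\bigl(R_U+\Msup{I}{0}{+}{1/2}(bR_U)\bigr)$, so only \emph{zeroth}-order uniform convergence of the tail is needed. The price is that $\lVert\Phi(\argdot;x)\rVert_1\leq 8T/x^2$ blows up as $x\to0$, forcing a three-part argument: Fourier inversion at $x=0$, the kernel bound for $x\geq x_0>0$, and equicontinuity at $x=0$ (via $\partial_xE_U(0,t)=-\Caputo{1/2}{\mathcal{R}_U}(t)$) to bridge the two regimes. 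Your dominated-convergence strategy, if completed with the higher-order tail norms, would buy a genuinely simpler uniform-in-$x$ conclusion at the cost of demanding (and verifying the decay of) more fractional derivatives of the remainder; as written, that cost has not been paid.
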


\begin{proof}
    The first three statements follow immediately from proposition~\ref{prop:SolRep2valid} for this particular IdBVP.
    
    As argued above, if $q$ solves IdBVP~\eqref{eqn:HeatIdBVP} then $q(0,t)$ is a solution of equation~\eqref{eqn:Heat.FLODE} with initial condition $q(0,0)=q_0(0)$.
    By~\cite[theorem~7.16]{KST2006a}, this problem has a unique $\frac{1}{2}$ analytic solution.
    This justifies the claimed unicity of $q$, but also it must be that
    \[
        q(0,t) = y_U(t) + R_U(t), \qquad \mbox{where} \qquad R_U(t) = \sum_{u=U+1}^\infty Y_u t^{u/2};
    \]
    because $R_U$ corresponds to the remainder of the $\frac{1}{2}$ power series for the same $\frac{1}{2}$ analytic function $y$, we refer to $R_U$ as the \emph{$U$ tail} corresponding to partial sum $y_U$.
    Moreover, because $(R_U)_{U\in\NN}$ is the sequence of tails of a $\frac{1}{2}$ power series with radius of convergence strictly greater than $T$, $R_U(t)$ converges to $0$ uniformly in $t\in[0,T]$.
    
    The linear dependence of $q_U$ on $y_U$, and the fact that $q(0,\argdot)$ must be the exact solution of equation~\eqref{eqn:Heat.FLODE}, imply that $q_U(x,t) = q(x,t) - E_U(x,t)$ in which
    \[
        2\pi E_U(x,t) = \int_{\partial D_R} \re^{\ri\la x- \la^2t} \int_0^T \re^{\la^2s}(\ri\la-b(s))R_U(s) \D s \D\la.
    \]
    Therefore, the desired result is equivalent to $E_U(x,t)\to0$ as $U\to\infty$, uniformly in $(x,t)\in[0,\infty)\times[0,T]$.
    
    We make the change of variables $\la=\ri\sqrt{-\ri\rho}$, $\la^2=\ri\rho$ to find
    \begin{equation} \label{eqn:Heat.ConvergenceProof.1}
        2\pi E_U(x,t) = \int_{-\infty}^\infty \re^{-\sqrt{-\ri\rho}x-\ri\rho t} \int_0^T \re^{\ri\rho s} \mathcal{R}_U(s) \D s \D\rho,
    \end{equation}
    where we applied proposition~\ref{prop:FTRLFracInt} to obtain the simple formula
    \begin{equation} \label{eqn:Heat.ConvergenceProof.defnmathcalR}
        \mathcal{R}_U(t) = \frac{-1}{2}\left(R_U(t) + \left(\Msup{I}{0}{+}{1/2}(bR_U)\right)(t)\right),
    \end{equation}
    in which $\ensuremath{\mathrm{I}}_{0\Mspacer+}^{1/2}$ is the Riemann-Liouville half integral operator of definition~\ref{defn:Caputo}.
    Multiplying $R_U$ by another $\frac{1}{2}$ analytic function $b$ yields another function converging uniformly in $t\in[0,T]$, and~\cite[theorem~2.7]{Die2010a} guarantees that the uniformity is preserved when a Riemann-Liouville fractional integral operator is applied.
    Hence $\mathcal{R}_U(t)\to0$ as $U\to\infty$, uniformly in $t\in[0,T]$.
    
    In the case $x=0$, the Fourier inversion theorem simplifies equation~\eqref{eqn:Heat.ConvergenceProof.1} to $E_U(0,t)=\mathcal{R}_U(t)$, so $E_U(0,t)\to0$ as $U\to\infty$, uniformly in $t\in[0,T]$.
    
    Now consider an arbitrary fixed $x>0$.
    By equation~\eqref{eqn:Heat.ConvergenceProof.1},
    \[
        E_U(x,t) = (\Phi(\argdot;x)*\mathcal{R}_U)(t),
    \]
    in which $*$ represents convolution and the integral in definition
    \[
        \Phi(t;x) = \int_{-\infty}^\infty \re^{-\ri\rho t} \re^{-\sqrt{-\ri\rho}x} \D\rho,
    \]
    is guaranteed to converge by the fact $\Re(-\sqrt{-\ri\rho}x)\to-\infty$ as $\rho\to\pm\infty$.
    Moreover,
    \[
        \left\lVert \Phi(\argdot;x) \right\rVert_1 \leq 2T\int_0^\infty \re^{-\sqrt{\rho/2}x}\D\rho = \frac{8T}{x^2}.
    \]
    Therefore
    \[
        \left\lvert E_U(x,t) \right\rvert \leq \frac{8T}{x^2} \left\lVert \mathcal{R}_U \right\rVert_\infty.
    \]
    For any fixed $x_0>0$, the right side converges to $0$ as $U\to\infty$, uniformly in $(x,t)\in[x_0,\infty)\times[0,T]$.
    
    Applying proposition~\ref{prop:SolRep2valid}.\ref{itm:SolRep2valid.Convergence} with $q_0=0$, $\phi_0=R_U$ and $\phi_1=-bR_U$, compatible because $R_U(0)=0$, the integral on the right of equation~\eqref{eqn:Heat.ConvergenceProof.1} has $x$ derivative given by differentiation under the integral, and this holds even at $x=0$.
    Therefore
    \[
        \partial_x E_U(0,t) = \int_{-\infty}^\infty -\sqrt{-\ri\rho} \re^{-\ri\rho t} \int_0^T \re^{\ri\rho s} \mathcal{R}_U(s)\D s \D\rho = -\Caputo{1/2}{\mathcal{R}_U}(t),
    \]
    in which the boundary terms vanish by Jordan's lemma and because $\mathcal{R}_U(0)=0$.
    For $U>1$, $\mathcal{R}_U$ is differentiable, with derivative also the tail of a $\frac{1}{2}$ power series with the same radius of convergence, so $\mathcal{R}'_U(t)\to0$ uniformly in $t\in[0,T]$, and, using again~\cite[theorem~2.7]{Die2010a}, so does $\Caputo{1/2}{\mathcal{R}_U}(t)$.
    In particular, this means that $E_U(x,t)$ is equicontinuous in $t\in[0,T]$ and $U\in\NN$ at $x=0$.
    Finally, we conclude that $E_U(x,t)\to0$ as $U\to\infty$, uniformly in $(x,t)\in[0,\infty)\times[0,T]$.
\end{proof}

\begin{rmk}
    Although we assumed strong regularity of $q_0$, this is not strictly necessary.
    However, the requirement of $\frac{1}{2}$ analyticity on $b,g$ suggests diminishing returns in reducing the regularity of $q_0$.
    There already exists a theory for the Frobenius method at $\frac{1}{2}$ singular points~\cite{KST2006a}, but its application in this setting is left for future work.
\end{rmk}

\begin{rmk}
    The uniform convergence of $q_U(x,t)\to q(x,t)$ in theorem~\ref{thm:Heat.Convergence} is at the same rate as the fractional power series tail $\mathcal{R}_U$ converges to $0$, as is most clearly demonstrated in the above proof at $x=0$, but is also evident at $x_0>0$.
    Indeed, even convergence pointwise in $x$ is only as fast as the fractional power series converges.
\end{rmk}

This completes stage~2 of the Fokas transform method for IdBVP~\eqref{eqn:HeatIdBVP}.
However, theorem~\ref{thm:Heat.Convergence} relied upon the assumption that there exists a solution of IdBVP~\eqref{eqn:HeatIdBVP}, which has yet to be justified.
The following theorem shows that the function $q_U$, as constructed in theorem~\ref{thm:Heat.Convergence}, does indeed converge to a solution of IdBVP~\eqref{eqn:HeatIdBVP}.
Therefore, such a solution must exist and stage~3 of the Fokas transform method is complete.

\begin{thm} \label{thm:HeatStage3}
    There exists a solution to IdBVP~\eqref{eqn:HeatIdBVP}.
    Moreover, supposing all criteria of theorem~\ref{thm:Heat.Convergence} except existence of a solution for IdBVP~\eqref{eqn:HeatIdBVP}, then, uniformly in $t\in[0,T]$,
    \[
        \lim_{U\to\infty} \left( b(t) q_U(0,t) + \partial_x q_U(0,t) \right) = 0,
    \]
    so that, in the limit $U\to0$, dynamic boundary condition~\eqref{eqn:HeatIdBVP.dBC} is recovered.
\end{thm}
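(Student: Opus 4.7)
The plan is to establish existence of a solution by explicit construction, and then to deduce the limit statement from the convergence machinery already developed in the proof of theorem~\ref{thm:Heat.Convergence}.

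For existence, let $y$ denote the unique $\frac{1}{2}$ analytic solution of the fractional ODE~\eqref{eqn:Heat.FLODE} with $y(0)=q_0(0)$ guaranteed by~\cite[theorem~7.16]{KST2006a}; by hypothesis its radius of convergence exceeds $T$.
Define $q$ by the Ehrenpreis form~\eqref{eqn:Heat.SolRep2} with the choices $\phi_0=y$ and $\phi_1=-by$.
Proposition~\ref{prop:SolRep2valid} then immediately yields~\eqref{eqn:HeatIdBVP.PDE} and~\eqref{eqn:HeatIdBVP.IC}.
To verify~\eqref{eqn:HeatIdBVP.dBC}, I would run the derivation~\eqref{eqn:Heat.GRChanged}--\eqref{eqn:Heat.FLODE} of the fractional ODE in reverse: apply the change of variables $\la=\ri\sqrt{-\ri\rho}$ to deform the $\partial D_R$ integral onto the real line (perturbed around $0$), invoke proposition~\ref{prop:FTCaputoFiniteInterval} to re-express $F[(\ri\la-b)y]$ in terms of $\Caputo{1/2}{y}-by$ plus boundary contributions, replace $\Caputo{1/2}{y}-by$ by $g$ using the fractional ODE, and close the loop via the defining formula for $g$ and temporal Fourier inversion, with lemma~\ref{lem:RemoveqT} absorbing the extraneous $\re^{\la^2T}\hat{q}(\argdot;T)$ contributions exactly as in the forward direction.
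This identifies $q(0,t)=y(t)$ and $\partial_xq(0,t)=-b(t)y(t)$, so that $b(t)q(0,t)+\partial_xq(0,t)=0$ and existence is established.

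With existence secured, the second statement follows rapidly from the proof of theorem~\ref{thm:Heat.Convergence}, which supplies $q_U(0,t)-q(0,t)=-\mathcal{R}_U(t)$ and $\partial_xq_U(0,t)-\partial_xq(0,t)=\Caputo{1/2}{\mathcal{R}_U}(t)$, both vanishing uniformly in $t\in[0,T]$ by the tail estimates already developed there.
Combined with~\eqref{eqn:HeatIdBVP.dBC} for $q$,
\[
    b(t)q_U(0,t)+\partial_xq_U(0,t) = -b(t)\mathcal{R}_U(t) + \Caputo{1/2}{\mathcal{R}_U}(t),
\]
whose right side tends to $0$ uniformly in $t$ because $b$ is continuous, hence bounded, on $[0,T]$.

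The main obstacle is the direct verification of~\eqref{eqn:HeatIdBVP.dBC}: tracking the intricate bookkeeping of contour deformations, double Fourier inversion (both spectral and temporal), and the cancellations between terms vanishing by Jordan's lemma and those cancelling through the fractional ODE is exactly the inverse of the original derivation and will require some care.
An attractive alternative is to invoke proposition~\ref{prop:BVattained} with $\gamma=q(\argdot,t)$: this function lies in $L_1[0,\infty)$ because $\re^{\ri\la x}$ decays exponentially along the rays $\arg\la=\pi/4$ and $\arg\la=3\pi/4$ comprising $\partial D_R$, but verification of the global-relation identity for this $\gamma$ ultimately reduces to the same fractional ODE manipulations.
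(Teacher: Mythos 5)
Your proposal is correct and follows essentially the same route as the paper: the paper also defines $q$ by the Ehrenpreis form with the exact solution $y$ of~\eqref{eqn:Heat.FLODE} and verifies the dynamic boundary condition precisely via your ``attractive alternative'', namely proposition~\ref{prop:BVattained} applied to a $\gamma$ whose existence is read off from the fractional ODE. The uniform limit is likewise obtained there from $q_U=q-E_U$ and the tail estimates of theorem~\ref{thm:Heat.Convergence}; your expression $-b(t)\mathcal{R}_U(t)+\Caputo{1/2}{\mathcal{R}_U}(t)$ is a repackaging of the paper's four-term pseudodifferential expansion of $b(t)E_U(0,t)+\partial_xE_U(0,t)$.
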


\begin{proof}
    Let $y$ be the solution of fractional differential equation~\eqref{eqn:Heat.FLODE}, whose unicity is guaranteed by~\cite[theorem~7.16]{KST2006a}.
    Then
    \[
        y(t) = y_U(t) + R_U(t),
    \]
    for $R_U$ the $U$ tail of the fractional power series for $y$.
    We define
    \[
        q(x,t) = \frac{1}{2\pi} \int_{-\infty}^\infty \re^{\ri\la x-\la^2t} \hat{q}_0(\la) \D\la - \frac{1}{2\pi} \int_{\partial D_R} \re^{\ri\la x-\la^2t} F[(\ri\la-b(\argdot))y(\argdot)](\la;T) \D\la.
    \]
    Thus $q$ is defined by equation~\eqref{thm:SolRep2valid.defnq} in which $\phi_0(t)=y(t)$ and $\phi_1(t)=-b(t)y(t)$.
    Because $y$ satisfies equation~\eqref{eqn:Heat.FLODE}, there is, for each $t\in[0,T]$, some function $\gamma(x,t)$ for which
    \[
        \hat{q}_0(\la) - \re^{\la^2t}\hat{\gamma}(\la;t) = F[(\ri\la-b(\argdot))y(\argdot)](\la;t) = \ri\la F[\phi_0] + F[\phi_1](\la;t).
    \]
    Therefore, by proposition~\ref{prop:BVattained},
    \[
        b(t)q(0,t) + q_x(0,t) = b(t)\phi_0(t) + \phi_1(t) = b(t)y(t)-b(t)y(t) = 0,
    \]
    so $q$ satisfies dynamic boundary condition~\eqref{eqn:HeatIdBVP.dBC}.
    
    By construction, $q$ depends upon $y$ in exactly the same way $q_U$ depends upon $y_U$, and that dependence is linear.
    Therefore, $q_U(x,t) = q(x,t) - E_U(x,t)$, in which
    \[
        2\pi E_U(x,t) = \int_{\partial D_R} \re^{\ri\la x-\la^2t} \int_0^T \re^{\la^2s} \left( \ri\la - b(s) \right) R_U(s) \D s \D \la.
    \]
    So it is equivalent for us to prove that, uniformly in $t\in[0,T]$,
    \[
        \lim_{U\to\infty} \mathcal{E}_U(t) = 0,
    \]
    in which $\mathcal{E}_U(t) = 2\pi[b(t) E_U(0,t) + \partial_x E_U(0,t)]$.

    Making the change of variables $\la=\ri\sqrt{-\ri\rho}$, $\la^2=\ri\rho$,
    \[
        \mathcal{E}_U(t) = \frac{-1}{2}\int_{-\infty}^\infty \re^{-\ri\rho t} \frac{b(t)-\sqrt{-\ri\rho}}{\sqrt{-\ri\rho}} \int_0^T \re^{\ri\rho s} \left(b(s)-\sqrt{-\ri\rho}\right)R_U(s) \D s \D \rho.
    \]
    Multiplying out the parentheses to obtain four terms, each expressed as its own double integral, we notice that each term is a pseudodifferential operator applied to $R_U$.
    Two are trivial to evaluate and turn out to be the same; the other two represent a Riemann-Liouville $\frac{1}{2}$ integral and a Caputo $\frac{1}{2}$ derivative of the remainder function $R_U$, so
    \begin{equation*}
        \left\lvert \mathcal{E}_U(t) \right\rvert
        \leq \frac{1}{2} \left( 2\left\lvert b(t) \right\rvert \times \left\lvert R_U(t) \right\rvert + \left\lvert b(t) \right\rvert \times \left\lvert \left(\Msup{I}{0}{+}{1/2} (R_U b) \right)(t) \right\rvert + \left\lvert \left(\Msup{I}{0}{+}{1/2} R'_U \right)(t) \right\rvert \right).
    \end{equation*}
    Because $T$ is strictly less than the radius of convergence of the $\frac{1}{2}$ power series, $R_U(t)\to0$ as $U\to\infty$, uniformly in $t\in[0,T]$, and so does $R_U(t)b(t)$, the product of $R_U$ with another $\frac{1}{2}$ analytic function.
    Similarly, $R'_U$ also converges to $0$ uniformly on $[0,T]$.
    Therefore, by~\cite[theorem~2.7]{Die2010a}, so do their Riemann-Liouville $\frac{1}{2}$ integrals.
    As $b$ is $\frac{1}{2}$ analytic, it is certainly bounded on $[0,T]$.
    Therefore, uniformly in $t\in[0,T]$, $\lim_{U\to\infty} \mathcal{E}_U(t) = 0$.
\end{proof}

\subsection{Caputo versus Riemann-Liouville fractional derivatives} \label{ssec:HeatEg.CaputovsRL}
One may ask why we choose the Caputo fractional derivative, rather than the Riemann-Liouville fractional derivative, in the argument between equations~\eqref{eqn:Heat.GRChanged} and~\eqref{eqn:Heat.GRChanged2}.
Indeed, with a trivial adaptation of the proof we give, one may easily prove a version of proposition~\ref{prop:FTCaputoFiniteInterval} for Riemann-Liouville fractional derivatives with the same $\sqrt{-\ri\rho}$ multiplying the Fourier transform:
\[
    \left(\mathcal{F}\prescript{\ensuremath{\mathrm{RL}}}{}{\ensuremath{\mathrm{D}}}_{0\Mspacer+}^{\alpha}y\right)(\rho) = (-\ri\rho)^\alpha \left(\mathcal{F}y\right)(\rho) - \left( \Msup{I}{0}{+}{\alpha} y \right)(0) + \left( \Msup{I}{0}{+}{\alpha} y \right)(T)\re^{\ri\rho T}.
\]
Therefore, by inserting the above boundary terms instead, we could obtain a version of equation~\eqref{eqn:Heat.GRChanged2} with the Riemann-Liouville fractional derivative instead of the Caputo fractional derivative.
However, the lack of decay of the Riemann-Liouville boundary terms would preclude their subsequent elimination via Jordan's lemma.
Therefore, the Caputo fractional derivative is the better choice.

\begin{rmk}
    Note that the boundary terms on the right of equation~\eqref{eqn:Heat.GRChanged2} are related to the two terms on the left of that equation.
    Indeed, integrating by parts in each of the Fourier transforms on the left, one obtains leading order terms which exactly cancel with the terms on the right.
    This cancellation is also unique to the Caputo fractional derivative.
\end{rmk}

\section{Further examples} \label{sec:EgLSLKdV}

\subsection{Linear Schr\"{o}dinger with inhomogeneous dynamic boundary condition} \label{ssec:Eg.LS}

In IdBVP~\eqref{eqn:LSIdBVP}, the global relation~\eqref{eqn:GR} at final time is
\[
    \hat{q}_0(\la) - \re^{\ri\la^2T}\hat{q}(\la;T) = \ri\left(\ri\la f_0(\la;T)+f_1(\la;T)\right).
\]
Applying transform~\eqref{eqn:transformdefn} to dynamic boundary condition~\eqref{eqn:LSIdBVP.dBC}, the global relation simplifies further to
\[
    \hat{q}_0(\la) - \re^{\ri\la^2T}\hat{q}(\la;T) = \ri F\left[\left(\ri\la-b(\argdot)\right) q(0,\argdot)\right](\la;T) + \ri F[h](\la;t).
\]
We make the change of variables $\la = -\sqrt{\ri}\sqrt{-\ri\rho}$, $\la^2=\rho$ and find that
\begin{multline*}
    \frac{1}{\sqrt{\ri}} \hat{q}_0\left(-\sqrt{\ri}\sqrt{-\ri\rho}\right) - \frac{1}{\sqrt{\ri}} \re^{\ri\rho T} \hat{q}\left(-\sqrt{\ri}\sqrt{-\ri\rho};T\right) \\
    = \int_0^T\re^{\ri\rho s}\left(\sqrt{-\ri\rho}-\sqrt{\ri}b(s)\right)q(0,s)\D s + \sqrt{\ri} F[h]\left(-\sqrt{\ri}\sqrt{-\ri\rho};T\right).
\end{multline*}
We use proposition~\ref{prop:FTCaputoFiniteInterval} to substitute
\[
    \int_0^T \re^{\ri\rho s} \sqrt{-\ri\rho}q(0,s)\D s
\]
for the Fourier transform of a Caputo fractional derivative, with associated boundary terms.
Just as in \S\ref{sec:EgHeat}, we multiply both sides of the above equation by $\re^{-\ri\rho t}$ and integrate along contour $\Gamma_R$ in $\rho$, where $\Gamma_R$ is the real line perturbed into $\CC^+$ along a semicircular arc centered at $0$.
Jordan's lemma removes the $\hat{q}(\argdot,T)$ and $q(0,T)$ terms.
A limit as $R\to0$ and an application of the Fourier inversion theorem for the Fourier transform of definition~\ref{defn:FTforCaputo}, yields
\begin{multline} \label{eqn:LS.FLODE}
    \Caputo{1/2}{y}(t) - \sqrt{\ri}b(t)y(t) \\
    = \frac{-\sqrt{\ri}}{2\pi}\int_{-\infty}^\infty \re^{-\ri\rho t} \left[ \ri \hat{q}_0\left(-\sqrt{\ri}\sqrt{-\ri\rho}\right) + \frac{q_0(0)}{\sqrt{\ri}\sqrt{-\ri\rho}} + F[h]\left(-\sqrt{\ri}\sqrt{-\ri\rho};T\right) \right] \D\rho
    =: \sqrt{\ri} g(t),
\end{multline}
where $y=q(0,\argdot)$.
Note that, because $h$ and $q_0$ are data of the IdBVP, we have once again reduced the D to N map to an inhomogeneous variable coefficient Caputo fractional linear ordinary differential equation.

\begin{thm} \label{thm:LS.Convergence}
    Suppose that $q_0\in\mathcal{S}[0,\infty)$.
    Suppose further that $b$ and $g$, the latter given by the definition on the right of equation~\eqref{eqn:LS.FLODE}, are $\frac{1}{2}$ analytic functions at $0$, with $\frac{1}{2}$ power series
    \begin{equation} \label{eqn:LS.defnBG}
        b(t) = \sum_{u=0}^\infty B_u t^{u/2}, \qquad \qquad g(t) = \sum_{u=0}^\infty G_u t^{u/2},
    \end{equation}
    with radii of convergence both strictly greater than $T$.
    For $U\in\NN$, let
    \[
        y_U(t) = \sum_{u=0}^U Y_u t^{u/2},
    \]
    in which $Y_0=q_0(0)$ and, for all integer $u\geq0$, $Y_{u+1}$ is given by
    \begin{equation} \label{eqn:LS.FLODE.Solution.Coeffs}
        Y_{u+1} = \sqrt{\ri}\frac{\Gamma((u+2)/2)}{\Gamma((u+3)/2)} \left( G_u + \sum_{v=0}^u Y_vB_{u-v} \right).
    \end{equation}
    For $U\in\NN$ and $(x,t)\in[0,\infty)\times[0,T]$, let
    \[
        q_U(x,t) = \frac{1}{2\pi} \int_{-\infty}^\infty \re^{\ri\la x-\ri\la^2t} \hat{q}_0(\la) \D\la - \frac{\ri}{2\pi} \int_{\partial D_R} \re^{\ri\la x-\ri\la^2t} F[(\ri\la-b(\argdot))y_U(\argdot)+h(\argdot)](\la;T) \D\la.
    \]
    
    Then
    \begin{enumerate}
        \item{
            For each $U\in\NN$, the formula defining $q_U(x,t)$ is given by integrals that converge uniformly in $(x,t)\in[0,\infty)\times[0,T]$; differentiating under the integral yields a formula for $\partial_xq_U(x,t)$, also in terms of integrals uniformly convergent in $(x,t)\in[0,\infty)\times[0,T]$.
        }
        \item{
            For all $U\in\NN$, $q_U$ satisfies equation~\eqref{eqn:LSIdBVP.PDE}.
        }
        \item{
            For all $U\in\NN$, $q_U$ satisfies equation~\eqref{eqn:LSIdBVP.IC}.
        }
        \item{
            If $q$ is a solution of IdBVP~\eqref{eqn:LSIdBVP} then $q$ is unique among functions satisfying~\eqref{eqn:SpaceForq} and the criteria of this theorem, and, uniformly in $t\in[0,T]$ and pointwise in $x\in[0,\infty)$,
            \[
                \lim_{U\to\infty} q_U(x,t) = q(x,t).
            \]
        }
        \item{
            IdBVP~\eqref{eqn:LSIdBVP} has a solution $q$ given by
            \[
                q(x,t) = \frac{1}{2\pi} \int_{-\infty}^\infty \re^{\ri\la x-\ri\la^2t} \hat{q}_0(\la) \D\la - \frac{\ri}{2\pi} \int_{\partial D_R} \re^{\ri\la x-\ri\la^2t} F[(\ri\la-b(\argdot))y(\argdot)+h(\argdot)](\la;T) \D\la.
            \]
            in which $y$ is the solution of equation~\eqref{eqn:LS.FLODE}.
            Dynamic boundary condition~\eqref{eqn:LSIdBVP.dBC} is satisfied in the limit $U\to\infty$, uniformly in $t\in[0,T]$.
        }
    \end{enumerate}
\end{thm}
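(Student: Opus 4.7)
The plan is to adapt the two-stage argument of theorems~\ref{thm:Heat.Convergence} and~\ref{thm:HeatStage3}; the main structural change is that the dispersive Schr\"{o}dinger exponential $\re^{-\ri\la^2 t}$ is purely oscillatory (rather than exponentially decaying) along half of $\partial D_R$, which weakens the uniform-in-$x$ convergence of the heat case to pointwise-in-$x$ convergence. Statements (i)--(iii) follow directly from proposition~\ref{prop:SolRep2valid} applied with $n=2$, $a=\ri$, $\phi_0=y_U$ and $\phi_1=h-by_U$; one checks that $\sum_j c_j(\la)F[\phi_j]=\ri F[(\ri\la-b)y_U+h]$, reproducing the inner integrand of $q_U$, and that compatibility $\phi_1(0)=q_0'(0)$ holds for every $U$ because $y_U(0)=Y_0=q_0(0)$ and the IdBVP's compatibility condition gives $h(0)-b(0)q_0(0)=q_0'(0)$.

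For (iv), let $y$ denote the unique $\frac{1}{2}$ analytic solution of equation~\eqref{eqn:LS.FLODE} guaranteed by~\cite[theorem~7.16]{KST2006a}; this simultaneously justifies the claimed unicity of $q$. Writing $y=y_U+R_U$ with $R_U$ the $\frac{1}{2}$ power series tail converging uniformly to $0$ on $[0,T]$, linearity of $q_U$ in its Dirichlet data yields $q_U(x,t)=q(x,t)-E_U(x,t)$ with
\[
    2\pi E_U(x,t) = \ri\int_{\partial D_R} \re^{\ri\la x-\ri\la^2 t}\int_0^T\re^{\ri\la^2 s}(\ri\la-b(s))R_U(s)\D s\D\la.
\]
The change of variables $\la=-\sqrt{\ri}\sqrt{-\ri\rho}$, $\la^2=\rho$ maps $\partial D_R$ to the real line perturbed near $0$ into $\CC^+$. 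Proposition~\ref{prop:FTRLFracInt} identifies the inner integral, modulo boundary terms at $s=0$ killed by $R_U(0)=0$ and at $s=T$ killed by Jordan's lemma exactly as in the passage between equations~\eqref{eqn:Heat.GRChanged2} and~\eqref{eqn:Heat.FLODE}, with the temporal Fourier transform of a linear combination of $R_U$, $bR_U$ and their Riemann--Liouville half integrals. At $x=0$ Fourier inversion returns $E_U(0,t)$ as this same combination plus a $\Caputo{1/2}{R_U}$ contribution; each summand converges uniformly to $0$ on $[0,T]$ via~\cite[theorem~2.7]{Die2010a}, using that $R_U'$, the derivative of the tail of a $\frac{1}{2}$ power series for $y'$, also converges uniformly to $0$.

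I expect the main obstacle to be pointwise convergence at arbitrary fixed $x>0$. Under the same change of variables, $\re^{\ri\la x}$ decays as $\re^{-\sqrt{|\rho|}x}$ on the $\rho<0$ half of the real line but is only unimodular on the $\rho>0$ half. The plan is to split the $\rho$ integral at $0$: on $\rho<0$ the argument of theorem~\ref{thm:Heat.Convergence} transfers almost verbatim, yielding a convolution with an $L^1$ kernel of norm $\bigoh(x^{-2})$ against a quantity tending uniformly to $0$; on $\rho>0$ one integrates by parts in $s$ (relying on $R_U(0)=0$ and on $R_U'$ being the tail of the $\frac{1}{2}$ power series for $y'$) to extract a factor $\rho^{-1}$, after which the $\rho$ kernel is integrable and dominated convergence bounds the piece by a multiple of $\|R_U\|_\infty+\|R_U'\|_\infty$. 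Because the $\rho>0$ argument yields no uniform-in-$x$ decay, the resulting convergence is only pointwise in $x$, as stated.

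Statement (v) mirrors theorem~\ref{thm:HeatStage3}. Define $q$ by the displayed formula with the exact $y$ in place of $y_U$; since $y$ solves~\eqref{eqn:LS.FLODE}, the rearranged global relation provides, for each $t\in[0,T]$, some $\gamma(\argdot;t)\in L_1[0,\infty)$ fulfilling the hypothesis of proposition~\ref{prop:BVattained} with $\phi_0=y$ and $\phi_1=h-by$, forcing $q(0,t)=y(t)$ and $q_x(0,t)=h(t)-b(t)y(t)$, which is exactly~\eqref{eqn:LSIdBVP.dBC}. Uniform-in-$t$ recovery $b(t)q_U(0,t)+\partial_xq_U(0,t)\to h(t)$ follows from the same pseudodifferential decomposition as for $E_U(0,t)$: the error reduces to a finite linear combination of $R_U$, $bR_U$, their Riemann--Liouville half integral and $\Caputo{1/2}{R_U}$, each tending to $0$ uniformly on $[0,T]$ by~\cite[theorem~2.7]{Die2010a} and the uniformity of the $\frac{1}{2}$ power series tail.
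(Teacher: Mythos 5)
The paper's own proof of this theorem is a two-line deferral: items (i)--(iii) are proposition~\ref{prop:SolRep2valid}, existence and unicity for equation~\eqref{eqn:LS.FLODE} is \cite[theorem~7.16]{KST2006a}, item (iv) is a corollary of theorem~\ref{thm:General.convergence}, and item (v) is theorem~\ref{thm:General.Stage3}. Your proposal instead carries out the adaptation of the heat-equation proof that the paper mentions as the alternative route, and most of it is sound: the identification $\sum_j c_j F[\phi_j]=\ri F[(\ri\la-b)y_U+h]$ with $\phi_0=y_U$, $\phi_1=h-by_U$ and the compatibility check are correct, the reduction of (iv) to $E_U\to0$ is correct, the $x=0$ and $\rho<0$ analyses transfer from theorem~\ref{thm:Heat.Convergence} as you say, and (v) via proposition~\ref{prop:BVattained} mirrors theorem~\ref{thm:HeatStage3} faithfully. (One sign slip: for $E_U$ the integral is over $\partial D_R$, the boundary of the first quadrant, so the correct parametrisation is $\la=+\sqrt{\ri}\sqrt{-\ri\rho}$; the map $\la=-\sqrt{\ri}\sqrt{-\ri\rho}$ is the one used on the global relation in $\clos(\CC^-)$. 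Your subsequent description of the exponential's behaviour is consistent with the correct sign, so this is cosmetic.)

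The genuine gap is in your treatment of the oscillatory half $\rho>0$ of the pointwise-in-$x$ convergence. A single integration by parts in $s$ produces a factor $\rho^{-1}$, which is \emph{not} integrable at infinity, so "the $\rho$ kernel is integrable and dominated convergence bounds the piece" does not follow; moreover that integration by parts leaves a nonvanishing boundary term $\re^{\ri\rho T}\mathcal{R}_U(T)/(\ri\rho)$ at $s=T$ which you do not address (Jordan's lemma removes it only for $t<T$, and the resulting conditionally convergent oscillatory integrals are not uniformly bounded in $t$ near $T$ without further work). To close this step you would need either a second integration by parts (acceptable for large $U$, since the tail $\mathcal{R}_U$ and its first two derivatives vanish at $0$ and converge uniformly, but this must be said and the $s=T$ boundary terms tracked), or a genuine oscillatory-integral/Dirichlet-test estimate. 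The paper's theorem~\ref{thm:General.convergence} avoids all of this by observing that $\re^{\ri\re^{\ri\theta_k}\sqrt[n]{-\ri\rho}x}$ is bounded in $\rho$ for each fixed $x$, hence defines a tempered distribution $\Phi_k(\argdot;x)$, and $E_U(x,\argdot)$ is a convolution of that distribution with $\M{\mathcal{R}}{k}{U}$, which tends to $0$ together with its derivatives; that argument, or an explicit repair along the lines above, is what your $\rho>0$ case still needs.
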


\begin{proof}
    The proof may be adapted from the proof of theorem~\ref{thm:Heat.Convergence} with reference to the derivation in~\S\ref{ssec:Eg.LS}.
    
    Alternatively, we proceed as follows.
    The first three claims are proposition~\ref{prop:SolRep2valid}.
    The existence and unicity of a solution for Caputo fractional differential equation~\eqref{eqn:LS.FLODE} is established in~\cite[theorem~7.16]{KST2006a}.
    The fourth claim is now a corollary of theorem~\ref{thm:General.convergence}.
    The fifth claim follows from theorem~\ref{thm:General.Stage3}.
\end{proof}

\subsection{LKdV with one dynamic boundary condition} \label{ssec:Eg.LKdV1}

IdBVP~\eqref{eqn:LKdV1IdBVP}, has global relation~\eqref{eqn:GR}
\begin{equation} \label{eqn:LKdV1.GR}
    \hat{q}_0(\la) - \re^{-\ri\la^3T}\hat{q}(\la;T) = \la^2 f_0(\la;T) - \ri\la f_1(\la;T) - f_2(\la;T).
\end{equation}
Upon application of transform~\eqref{eqn:transformdefn} to dynamic boundary condition~\eqref{eqn:LKdV1IdBVP.dBC}, the global relation simplifies to
\begin{equation} \label{eqn:LKdV1.GRSimplified}
    \hat{q}_0(\la) - \re^{-\ri\la^3T}\hat{q}(\la;T) = F\left[\left(\la^2 + b(\argdot)\right) q(0,\argdot)\right](\la;T) - \ri\la f_1(\la;T).
\end{equation}

Even after this reduction, global relation~\eqref{eqn:LKdV1.GRSimplified} is an integral equation relating two unknown quantities: $q(0,\argdot)$ and $q_x(0,\argdot)$.
Observe that this integral equation holds for all $\la\in\clos\CC^-$ and, when applying the inverse Fourier transform, we require validity on only a single complex contour.
We will find two contours on which to pose an inverse Fourier transform, thereby obtaining a system of two fractional linear ordinary differential equations in the two unknowns.
Recalling that it will be crucial to establish that any terms involving $\hat{q}(\la;T)$ evaluate to zero after an appropriate inverse Fourier transform has been applied, we investigate the domain of validity of lemma~\ref{lem:RemoveqT}.

We seek two maps satisfying $\la = \re^{\ri\theta}\sqrt[3]{-\ri\rho}$ and $\la^3=-\rho$ for which $\re^{\ri3\theta}=-\ri=\re^{-\ri\pi/2}$ and $-5\pi/6\leq\theta\leq-\pi/6$.
A simple calculation reveals that the only valid choices are $\theta_1=-\pi/6$ and $\theta_2=-5\pi/6$.
Applying those maps, we find
\begin{multline} \label{eqn:LKdV1.GRSimplified2}
    \hat{q}_0\left( \re^{\ri\theta_k}\sqrt[3]{-\ri\rho} \right) - \re^{\ri\rho T} \hat{q}\left( \re^{\ri\theta_k}\sqrt[3]{-\ri\rho} ; T \right) \\
    = F \left[ \left( \re^{\ri2\theta_k}\left(\sqrt[3]{-\ri\rho}\right)^2 + b(\argdot) \right)q(0,\argdot) \right] \left(\re^{\ri\theta_k}\sqrt[3]{-\ri\rho};T\right) - \ri \re^{\ri\theta_k}\sqrt[3]{-\ri\rho} f_1\left(\re^{\ri\theta_k}\sqrt[3]{-\ri\rho};T\right).
\end{multline}
We use proposition~\ref{prop:FTCaputoFiniteInterval} to substitute two of these terms for Caputo fractional derivatives, obtaining
\begin{multline} \label{eqn:LKdV1.GRSimplified3}
    \hat{q}_0\left( \re^{\ri\theta_k}\sqrt[3]{-\ri\rho} \right) - \re^{\ri\rho T} \hat{q}\left( \re^{\ri\theta_k}\sqrt[3]{-\ri\rho} ; T \right) \\
    = \re^{\ri2\theta_k} \left(\mathcal{F} \prescript{\ensuremath{\mathrm{C}}}{}{\ensuremath{\mathrm{D}}}_{0\Mspacer+}^{2/3} y \right)(\rho)
    + \re^{\ri2\theta_k}\left( \frac{y(0)-y(T)\re^{\ri\rho T}}{\sqrt[3]{-\ri\rho}} \right)
    + \left(\mathcal{F}[by]\right)(\rho) \\
    - \ri\re^{\ri\theta_k} \left(\mathcal{F} \prescript{\ensuremath{\mathrm{C}}}{}{\ensuremath{\mathrm{D}}}_{0\Mspacer+}^{1/3} z \right)(\rho)
    - \ri\re^{\ri\theta_k}\left( \frac{z(0)-z(T)\re^{\ri\rho T}}{\left(\sqrt[3]{-\ri\rho}\right)^2} \right),
\end{multline}
in which $y=q(0,\argdot)$ and $z=q_x(0,\argdot)$.
As in the previous examples, we multiply by the Fourier kernel $\re^{-\ri\rho t}$, integrate along $\Gamma_R$ and take the limit as $R\to0$.
The Fourier inversion theorem and lemma~\ref{lem:RemoveqT} provides a system of two equations, one for each $k\in\{1,2\}$,
\begin{multline} \label{eqn:LKdV1.FLODEsystem}
    \re^{\ri2\theta_k} \Caputo{2/3}{y}(t) + b(t)y(t) - \ri\re^{\ri\theta_k} \Caputo{1/3}{z}(t) \\
    = \frac{1}{2\pi} \int_{-\infty}^\infty \re^{-\ri\rho t} \left[ \hat{q}_0\left(\re^{\ri\theta_k}\sqrt[3]{-\ri\rho}\right) - \frac{\re^{\ri2\theta_k}q_0(0)}{\sqrt[3]{-\ri\rho}} + \frac{\ri\re^{\ri\theta_k}q'_0(0)}{\left(\sqrt[3]{-\ri\rho}\right)^2} \right]\D\rho.
\end{multline}
We eliminate $\Caputo{1/3}{z}$ from the system to obtain
\begin{multline} \label{eqn:LKdV1.FLODE}
    \Caputo{2/3}{y}(t) + b(t)y(t) \\
    = \frac{1}{2\sqrt{3}\pi} \int_{-\infty}^\infty \re^{-\ri\rho t} \left[ \re^{-\ri\theta_1} \hat{q}_0\left(\re^{\ri\theta_1}\sqrt[3]{-\ri\rho}\right) - \re^{-\ri\theta_2} \hat{q}_0\left(\re^{\ri\theta_2}\sqrt[3]{-\ri\rho}\right) - \frac{\sqrt{3}q_0(0)}{\sqrt[3]{-\ri\rho}} \right] \D\rho
    =: g(t).
\end{multline}

Solution representation~\eqref{eqn:SolRep2} for this IdBVP is
\begin{equation} \label{eqn:LKdV1.SolRep}
    2\pi q(x,t) = \int_{-\infty}^\infty \re^{\ri\la x+\ri\la^3t} \hat{q}_0(\la) \D\la - \int_{\partial D_R} \re^{\ri\la x+\ri\la^3t} \left[ \la^2 f_0(\la;t) - \ri\la f_1(\la;t) - f_2(\la;t) \right] \D\la,
\end{equation}
and $f_0$ and $f_2$ are readily calculated from $q(0,\argdot)$ and $q_{xx}(0,\argdot)$, respectively, the latter of which can be determined from dynamic boundary condition~\eqref{eqn:LKdV1IdBVP.dBC}.
It remains to find a valid expression for $f_1$ that may be substituted into equation~\eqref{eqn:LKdV1.SolRep}.
There are two methods one may follow.

The first approach is to rearrange one of equations~\eqref{eqn:LKdV1.FLODEsystem} to
\begin{equation}
    \Caputo{1/3}{z}(t)
    = \frac{\ri\re^{-\ri\theta_k}}{2\pi} \int_{-\infty}^\infty \re^{-\ri\rho t} \hat{q}_0\left(\re^{\ri\theta_k}\sqrt[3]{-\ri\rho}\right) \D\rho - \ri\re^{\ri\theta_k} \Caputo{2/3}{y}(t) + b(t)y(t),
\end{equation}
in which $y$ is now known.
Applying the left sided Riemann-Liouville $1/3$ fractional integral operator, we obtain an expression for $z=q_x(0,\argdot)$, and thereby an expression for $f_1$.

The alternative approach, inspired by the Fokas transform method for IBVP, is to return to the original global relation~\eqref{eqn:LKdV1.GR}, apply one of the maps $\la\mapsto\re^{\pm2\ri\pi/3}\la$ and solve for $f_1$ directly.
The resulting expression depends upon $\hat{q}(\argdot;T)$, but a Jordan's lemma argument may be used to show that the integral about $\partial D_R$ of the term involving $\hat{q}(\argdot;T)$ evaluates to $0$.
This is a purely algebraic method for obtaining $f_1$, without going via $q_x(0,\argdot)$, thereby avoiding any more fractional integrals.

Note that both of these methods have some redundancy; one may choose either $k\in\{1,2\}$ in the former, and either of maps $\la\mapsto\re^{\pm2\ri\pi/3}\la$ in the latter.
The second method provides an alternative view of this redundancy.
Indeed, suppose one did not take advantage of the dynamic boundary condition~\eqref{eqn:LKdV1IdBVP.dBC} to find $q_{xx}(0,\argdot)$, so had not yet calculated $f_2$.
One may use both the maps $\la\mapsto\re^{\pm2\ri\pi/3}\la$ in the second approach and solve the resulting system for both $f_1$ and $f_2$.
The same Jordan's lemma type arguments would then establish that all terms featuring $\hat{q}(\argdot;T)$ evaluate to $0$.

\begin{thm} \label{thm:LKdV1.Convergence}
    Suppose that $q_0\in\mathcal{S}[0,\infty)$.
    Suppose further that $b$ and $g$, the latter given by the definition on the right of equation~\eqref{eqn:LKdV1.FLODE}, are $\frac{2}{3}$ analytic functions at $0$, with $\frac{2}{3}$ power series
    \begin{equation} \label{eqn:LKdV1.defnBG}
        b(t) = \sum_{u=0}^\infty B_u t^{2u/3}, \qquad \qquad g(t) = \sum_{u=0}^\infty G_u t^{2u/3},
    \end{equation}
    with radii of convergence both strictly greater than $T$.
    For $U\in\NN$, let
    \[
        y_U(t) = \sum_{u=0}^U Y_u t^{2u/3},
    \]
    in which $Y_0=q_0(0)$ and, for all integer $u\geq0$, $Y_{u+1}$ is given by
    \begin{equation} \label{eqn:LKdV1.FLODE.Solution.Coeffs}
        Y_{u+1} = \frac{\Gamma((2u+3)/3)}{\Gamma((2u+5)/3)} \left( G_u - \sum_{v=0}^u Y_vB_{u-v} \right).
    \end{equation}
    For $U\in\NN$ and $(x,t)\in[0,\infty)\times[0,T]$, let $\alpha=\re^{2\pi\ri/3}$ and
    \begin{multline*}
        q_U(x,t) = \frac{1}{2\pi} \int_{-\infty}^\infty \re^{\ri\la x+\ri\la^3t} \hat{q}_0(\la) \D\la \\
        - \frac{1}{2\pi} \int_{\partial D_R} \re^{\ri\la x+\ri\la^3t} \left((1-\alpha)F[(\la^2-\alpha^2b(\argdot))y_U(\argdot)](\la;T) + \alpha^2\hat{q}_0(\alpha\la)\right)\D\la.
    \end{multline*}
    
    Then
    \begin{enumerate}
        \item{
            For each $U\in\NN$, the formula defining $q_U(x,t)$ is given by integrals that converge uniformly in $(x,t)\in[0,\infty)\times[0,T]$; differentiating under the integral yields a formula for $\partial_xq_U(x,t)$, also in terms of integrals uniformly convergent in $(x,t)\in[0,\infty)\times[0,T]$.
        }
        \item{
            For all $U\in\NN$, $q_U$ satisfies equation~\eqref{eqn:LKdV1IdBVP.PDE}.
        }
        \item{
            For all $U\in\NN$, $q_U$ satisfies equation~\eqref{eqn:LKdV1IdBVP.IC}.
        }
        \item{
            If $q$ is a solution of IdBVP~\eqref{eqn:LKdV1IdBVP} then $q$ is unique among functions satisfying~\eqref{eqn:SpaceForq} and the criteria of this theorem, and, uniformly in $(x,t)\in[0,\infty)\times[0,T]$,
            \[
                \lim_{U\to\infty} q_U(x,t) = q(x,t).
            \]
        }
        \item{
            IdBVP~\eqref{eqn:LKdV1IdBVP} has a solution $q$ given by
            \begin{multline*}
                q(x,t) = \frac{1}{2\pi} \int_{-\infty}^\infty \re^{\ri\la x+\ri\la^3t} \hat{q}_0(\la) \D\la \\
                - \frac{1}{2\pi} \int_{\partial D_R} \re^{\ri\la x+\ri\la^3t} \left((1-\alpha)F[(\la^2-\alpha^2b(\argdot))y(\argdot)](\la;T) + \alpha^2\hat{q}_0(\alpha\la)\right)\D\la.
            \end{multline*}
            in which $y$ is the solution of equation~\eqref{eqn:LKdV1.FLODE}.
            Dynamic boundary condition~\eqref{eqn:LKdV1IdBVP.dBC} is satisfied in the limit $U\to\infty$, uniformly in $t\in[0,T]$.
        }
    \end{enumerate}
\end{thm}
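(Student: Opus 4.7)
The plan is to closely adapt the arguments used to prove theorems~\ref{thm:Heat.Convergence} and~\ref{thm:HeatStage3}. Claims 1--3 follow immediately from proposition~\ref{prop:SolRep2valid}, once the formula for $q_U$ is recognised as the Ehrenpreis form~\eqref{thm:SolRep2valid.defnq} for the boundary functions $\phi_0 = y_U$, $\phi_2 = -b y_U$ (from the dynamic BC), and $\phi_1$ reconstructed through the second method described at the end of~\S\ref{ssec:Eg.LKdV1}. The explicit term $\alpha^2 \hat{q}_0(\alpha\la)$ in the formula for $q_U$ is precisely the surviving piece after the map $\la\mapsto\alpha\la$ is applied to~\eqref{eqn:LKdV1.GR} to solve for $f_1$ algebraically and the resulting $\hat{q}(\alpha\la;T)$ contribution is discarded via Jordan's lemma on the $\partial D_R$ integral. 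The compatibility conditions $\phi_j(0) = q_0^{(j)}(0)$ needed by proposition~\ref{prop:SolRep2valid} are inherited from $Y_0 = q_0(0)$ combined with~\eqref{eqn:LKdV1IdBVP.dBC} at $t=0$.

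For claim 4, uniqueness rests on the observation that any solution $q$ of the IdBVP forces $y = q(0,\argdot)$ to satisfy the Caputo fractional linear ordinary differential equation~\eqref{eqn:LKdV1.FLODE} with $y(0) = q_0(0)$, and~\cite[theorem~7.16]{KST2006a} provides a unique $\frac{2}{3}$-analytic solution; this fixes $q$ through the Ehrenpreis form. For convergence, write $y = y_U + R_U$ with $R_U$ the $U$-tail of the $\frac{2}{3}$-power series for $y$. Linearity of the representation yields $q_U(x,t) = q(x,t) - E_U(x,t)$, where $E_U$ is the same integral applied to $R_U$ in place of $y_U$. After the change of variables $\la = \re^{\ri\theta_k}\sqrt[3]{-\ri\rho}$ on the two rays of $\partial D_R$, $E_U$ becomes a combination of convolutions of $R_U$ and $bR_U$ against kernels of the form $\Phi_k(t;x) = \int \re^{-\ri\rho t}\exp(\ri\re^{\ri\theta_k}\sqrt[3]{-\ri\rho}\,x)\D\rho$; these are $L_1$ in $t$ with $\lVert\Phi_k(\argdot;x)\rVert_1 = \bigoh(x^{-3})$ as $x\to\infty$, so $E_U\to0$ uniformly on $[x_0,\infty)\times[0,T]$ for any $x_0>0$. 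To extend uniformity up to $x=0$, one differentiates under the integral (justified by proposition~\ref{prop:SolRep2valid}.\ref{itm:SolRep2valid.Convergence}, noting $R_U(0)=0$) to obtain equicontinuity in $x$ at $0$ jointly in $t$ and $U$, then invokes~\cite[theorem~2.7]{Die2010a} to pass the uniform decay of $R_U$, $bR_U$ and $R'_U$ through the Riemann-Liouville and Caputo fractional operators that appear.

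For claim 5, we mimic theorem~\ref{thm:HeatStage3}: define $q$ through the unique $\frac{2}{3}$-analytic solution $y$ of~\eqref{eqn:LKdV1.FLODE}. Because $y$ satisfies that equation, the derivation leading to~\eqref{eqn:LKdV1.FLODE} can be reversed using proposition~\ref{prop:FTCaputoFiniteInterval}, producing a global-relation-type identity of the form required by proposition~\ref{prop:BVattained}, which then yields $b(t) q(0,t) + q_{xx}(0,t) = 0$. The quantitative $U\to\infty$ limit for $b q_U(0,\argdot) + \partial_{xx} q_U(0,\argdot)$ is then obtained by applying the same change of variables to $\mathcal{E}_U(t) := 2\pi[b(t) E_U(0,t) + \partial_{xx} E_U(0,t)]$ and bounding it in terms of $R_U$, $bR_U$, $\bigl(\Msup{I}{0}{+}{1/3}(bR_U)\bigr)$ and $\bigl(\Msup{I}{0}{+}{1/3} R'_U\bigr)$, each of which converges uniformly to $0$ on $[0,T]$ by the assumed $\frac{2}{3}$-analyticity combined with~\cite[theorem~2.7]{Die2010a}.

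The main obstacle will be the uniform convergence claim up to $x=0$ in part~(4): the $\bigoh(x^{-3})$ kernel bound is useless there, and one must establish equicontinuity in $x$ at $0$ by controlling fractional derivatives of $R_U$ directly. The cubic setting adds a further wrinkle, because two rays of $\partial D_R$ now contribute (rather than the single imaginary ray of the heat case), so the argument for each kernel $\Phi_k$ must be carried out twice and the error terms at $x=0$ combined compatibly with the cancellations already used to reduce system~\eqref{eqn:LKdV1.FLODEsystem} to the single equation~\eqref{eqn:LKdV1.FLODE}.
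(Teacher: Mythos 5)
Your proposal is correct and follows essentially the same route as the paper, whose own proof simply instructs the reader to adapt theorems~\ref{thm:Heat.Convergence} and~\ref{thm:HeatStage3} with reference to \S\ref{ssec:Eg.LKdV1} (equivalently, to invoke proposition~\ref{prop:SolRep2valid} together with theorems~\ref{thm:General.convergence} and~\ref{thm:General.Stage3}); your identification of the $\alpha^2\hat{q}_0(\alpha\la)$ term via the algebraic elimination of $f_1$, the decomposition $q_U=q-E_U$ with $E_U$ controlled through $L_1$ convolution kernels and equicontinuity at $x=0$, and the reversal of the derivation via proposition~\ref{prop:BVattained} all match the paper's argument. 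One small clarification: since $a=-\ri$ gives $N=1$, the set $D_R$ here has a single sectorial component whose two boundary rays are covered by the one map $\la=\re^{\ri\pi/2}\sqrt[3]{-\ri\rho}$ (the angles $\theta_1=-\pi/6$, $\theta_2=-5\pi/6$ belong to the lower-half-plane maps used to derive system~\eqref{eqn:LKdV1.FLODEsystem}, not to $\partial D_R$), so only one kernel $\Phi$ arises and the ``doubling'' you anticipate as the main obstacle does not in fact occur.
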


\begin{proof}
    The proof may be adapted from the proof of theorem~\ref{thm:Heat.Convergence} with reference to the derivation in~\S\ref{ssec:Eg.LKdV1}.
    
    Alternatively, the first three results are proposition~\ref{prop:SolRep2valid}, equation~\eqref{eqn:LKdV1.FLODE} has a unique solution by~\cite[theorem~7.16]{KST2006a}, so the fourth result follows by an application of theorem~\ref{thm:General.convergence}.
    The fifth result is theorem~\ref{thm:General.Stage3}.
\end{proof}

\subsection{LKdV with two dynamic boundary conditions} \label{ssec:Eg.LKdV2}

The global relation~\eqref{eqn:GR} in IdBVP~\eqref{eqn:LKdV2IdBVP} is
\[
    \hat{q}_0(\la) - \re^{\ri\la^3T}\hat{q}(\la;T) = -\la^2 f_0(\la;T) + \ri\la f_1(\la;T) + f_2(\la;T).
\]
Applying transform~\eqref{eqn:transformdefn} to dynamic boundary conditions~\eqref{eqn:LKdV2IdBVP.dBC1} and~\eqref{eqn:LKdV2IdBVP.dBC2}, the global relation simplifies to
\begin{equation} \label{eqn:LKdV2.GRSimplified}
    \hat{q}_0(\la) - \re^{\ri\la^3T}\hat{q}(\la;T) = F\left[\left(-\la^2 - \ri\la b(\argdot) - \beta(\argdot)\right) q(0,\argdot)\right](\la;T).
\end{equation}
In order to apply lemma~\ref{lem:RemoveqT}, we seek a single map $\la=\re^{\ri\theta}\sqrt[3]{-\ri\rho}$, $\la^3=\rho$ so that $\re^{\ri3\theta}=\ri=\re^{\ri\pi/2}$ and $-5\pi/6\leq\theta\leq-\pi/6$.
Proceeding with the only valid choice, $\theta=-\pi/2$, and applying proposition~\ref{prop:FTCaputoFiniteInterval} three times, we find
\begin{align} \notag
    &\hspace{-2em}\hat{q}_0\left(-\ri\sqrt[3]{-\ri\rho}\right) - \re^{\ri\rho T}\hat{q}\left(-\ri\sqrt[3]{-\ri\rho};T\right) \\ \notag
    &= F\left[ \left( \left(\sqrt[3]{-\ri\rho}\right)^2 - \sqrt[3]{-\ri\rho} b(\argdot) - \beta(\argdot)\right) y(\argdot) \right]\left(-\ri\sqrt[3]{-\ri\rho};T\right) \\ \notag
    &= -\left(\mathcal{F}[\beta y]\right)(\rho)
    - \left(\mathcal{F} \prescript{\ensuremath{\mathrm{C}}}{}{\ensuremath{\mathrm{D}}}_{0\Mspacer+}^{1/3} [by] \right)(\rho)
    - \frac{b(0)y(0)-b(T)y(T)\re^{\ri\rho T}}{\left(\sqrt[3]{-\ri\rho}\right)^2} \\
    &\hspace{2em} + \left(\mathcal{F} \prescript{\ensuremath{\mathrm{C}}}{}{\ensuremath{\mathrm{D}}}_{0\Mspacer+}^{1/3\Mspacer2} y \right)(\rho)
    + \frac{\Caputo{1/3}{y}(0)-\Caputo{1/3}{y}(T)\re^{\ri\rho T}}{\left(\sqrt[3]{-\ri\rho}\right)^2}
    + \frac{y(0)-y(T)\re^{\ri\rho T}}{\sqrt[3]{-\ri\rho}},
    \label{eqn:LKdV2.GRSimplified2}
\end{align}
in which $y=q(0,\argdot)$.
By definition~\ref{defn:Caputo}, $\Caputo{1/3}{y}(0)=0$.
The inverse Fourier transform and lemma~\ref{lem:RemoveqT} yields a sequential inhomogeneous variable coefficient Caputo fractional linear ordinary differential equation:
\begin{multline} \label{eqn:LKdV2.FLODE}
    \CaputoSeq{1/3}{2}{y}(t) - \Caputo{1/3}{by}(t) - \beta(t)y(t) \\
    = \frac{1}{2\pi} \int_{-\infty}^\infty \re^{-\ri\rho t} \left[ \hat{q}_0\left(-\ri\sqrt[3]{-\ri\rho}\right)  - \frac{q_0(0)\left(\sqrt[3]{-\ri\rho}-b(0)\right)}{\left(\sqrt[3]{-\ri\rho}\right)^2} \right] \D\rho
    =: g(t).
\end{multline}

\begin{rmk}
    In the derivation of equation~\eqref{eqn:LKdV2.GRSimplified2}, we opted to use the sequential Caputo fractional derivative $\CaputoSeq{1/3}{2}{y}$, but we could have used the $2/3$ Caputo fractional derivative $\Caputo{2/3}{y}$ in its stead.
    Indeed, the latter may appear preferable, as it would generate fewer boundary terms.
    Unfortunately, to the best of the authors' knowledge, there is no existence or uniqueness theorem for Caputo fractional linear ordinary differential equations with variable coefficients and nonsequential fractional derivatives of mixed orders.
    Therefore, we elect to employ sequential Caputo fractional derivatives.
\end{rmk}

\begin{thm} \label{thm:LKdV2.Convergence}
    Suppose that $q_0\in\mathcal{S}[0,\infty)$.
    Suppose further that $b$ and $g$, the latter given by the definition on the right of equation~\eqref{eqn:LKdV2.FLODE}, are $\frac{1}{3}$ analytic functions at $0$, with $\frac{1}{3}$ power series
    \begin{equation} \label{eqn:LKdV2.defnBG}
        b(t) = \sum_{u=0}^\infty B_u t^{u/3}, \qquad \qquad \beta(t) = \sum_{u=0}^\infty \mathcal{B}_u t^{u/3}, \qquad \qquad g(t) = \sum_{u=0}^\infty G_u t^{u/3},
    \end{equation}
    with radii of convergence all strictly greater than $T$.
    For $U\in\NN$, let
    \[
        y_U(t) = \sum_{u=0}^U Y_u t^{u/3},
    \]
    in which $Y_0=q_0(0)$, $Y_1=0$ and, for all integer $u\geq0$, $Y_{u+2}$ is given by
    \begin{equation} \label{eqn:LKdV2.FLODE.Solution.Coeffs}
        Y_{u+2} = \frac{\Gamma((u+4)/3)}{\Gamma((u+5)/3)} \sum_{v=0}^{u+1} Y_vB_{u-v} + \frac{\Gamma((u+3)/3)}{\Gamma((u+5)/3)} \left( G_u + \sum_{v=0}^u Y_v\mathcal{B}_{u-v} \right).
    \end{equation}
    For $U\in\NN$ and $(x,t)\in[0,\infty)\times[0,T]$, let
    \[
        q_U(x,t) = \frac{1}{2\pi} \int_{-\infty}^\infty \re^{\ri\la x-\ri\la^3t} \hat{q}_0(\la) \D\la - \frac{1}{2\pi} \int_{\partial D_R} \re^{\ri\la x-\ri\la^3t} F[(-\la^2-\ri\la b(\argdot)-\beta(\argdot))y_U(\argdot)](\la;T) \D\la.
    \]
    
    Then
    \begin{enumerate}
        \item{
            For each $U\in\NN$, the formula defining $q_U(x,t)$ is given by integrals that converge uniformly in $(x,t)\in[0,\infty)\times[0,T]$; differentiating under the integral yields formulae for $\partial_xq_U(x,t)$ and $\partial_{xx}q_U(x,t)$, also in terms of integrals uniformly convergent in $(x,t)\in[0,\infty)\times[0,T]$.
        }
        \item{
            For all $U\in\NN$, $q_U$ satisfies equation~\eqref{eqn:LKdV2IdBVP.PDE}.
        }
        \item{
            For all $U\in\NN$, $q_U$ satisfies equation~\eqref{eqn:LKdV2IdBVP.IC}.
        }
        \item{
            If $q$ is a solution of IdBVP~\eqref{eqn:LKdV2IdBVP} then $q$ is unique among functions satisfying~\eqref{eqn:SpaceForq} and the criteria of this theorem, and, uniformly in $t\in[0,T]$ and pointwise in $x\in[0,\infty)$,
            \[
                \lim_{U\to\infty} q_U(x,t) = q(x,t).
            \]
        }
        \item{
            IdBVP~\eqref{eqn:LKdV2IdBVP} has a solution $q$ given by
            \[
                q(x,t) = \frac{1}{2\pi} \int_{-\infty}^\infty \re^{\ri\la x-\ri\la^3t} \hat{q}_0(\la) \D\la - \frac{1}{2\pi} \int_{\partial D_R} \re^{\ri\la x-\ri\la^3t} F[(-\la^2-\ri\la b(\argdot)-\beta(\argdot))y(\argdot)](\la;T) \D\la.
            \]
            in which $y$ is the solution of equation~\eqref{eqn:LKdV2.FLODE}.
            Dynamic boundary conditions~\eqref{eqn:LKdV2IdBVP.dBC1} and~\eqref{eqn:LKdV2IdBVP.dBC2} are satisfied in the limit $U\to\infty$, uniformly in $t\in[0,T]$.
        }
    \end{enumerate}
\end{thm}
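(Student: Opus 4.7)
The plan is to follow the two-step template used for IdBVP~\eqref{eqn:HeatIdBVP} in theorems~\ref{thm:Heat.Convergence} and~\ref{thm:HeatStage3}, adapted to $n=3$ with a third boundary term $\phi_2$; the genuine novelties are the sequential Caputo structure of the fractional ODE~\eqref{eqn:LKdV2.FLODE} and the presence of two dynamic boundary conditions. As in the proofs of theorems~\ref{thm:LS.Convergence} and~\ref{thm:LKdV1.Convergence}, one may alternatively cite the forthcoming theorems~\ref{thm:General.convergence} and~\ref{thm:General.Stage3}; below I sketch the direct route.

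For items (1)--(3) I would apply proposition~\ref{prop:SolRep2valid} with $\phi_0 = y_U$, $\phi_1 = -by_U$ and $\phi_2 = -\beta y_U$; the compatibility $q_0^{(j)}(0) = \phi_j(0)$ reduces, via $Y_0 = q_0(0)$ and $Y_1 = 0$, to the compatibility conditions on $q_0,b,\beta$ stipulated in IdBVP~\eqref{eqn:LKdV2IdBVP}. For item (4) I would invoke~\cite[theorem~7.16]{KST2006a} applied to the sequential Caputo equation~\eqref{eqn:LKdV2.FLODE} of total order $2/3$ to obtain a unique $\tfrac{1}{3}$ analytic solution $y$ at $0$ subject to $y(0)=q_0(0)$ and $\Caputo{1/3}{y}(0)=0$, the latter being the paper's convention (as per the remark preceding the theorem) and accounting for the choice $Y_1=0$. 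Decomposing $y = y_U + R_U$ and exploiting the linear dependence of $q_U$ upon $y_U$ yields $q_U(x,t) - q(x,t) = -E_U(x,t)$ with
\[
    2\pi E_U(x,t) = \int_{\partial D_R} \re^{\ri\la x - \ri\la^3 t} F\big[(-\la^2 - \ri\la b(\argdot) - \beta(\argdot)) R_U(\argdot)\big](\la;T)\D\la.
\]
After the change of variables $\la = -\ri\sqrt[3]{-\ri\rho}$, $\la^3 = \rho$, each summand becomes a fractional integral or derivative applied to $R_U$ (up to pieces removable by Jordan's lemma as in~\S\ref{sec:EgHeat}), so~\cite[theorem~2.7]{Die2010a} transfers the uniform-in-$t$ decay $R_U\to 0$ to $E_U(0,t)$; pointwise-in-$x$ control for $x>0$ follows from an $L_1$ bound on the convolution kernel, exactly as in the proof of theorem~\ref{thm:Heat.Convergence}.

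For item (5) I would mirror theorem~\ref{thm:HeatStage3}: define $q$ by the formula for $q_U$ with $y$ in place of $y_U$, then apply proposition~\ref{prop:BVattained} twice to verify~\eqref{eqn:LKdV2IdBVP.dBC1} and~\eqref{eqn:LKdV2IdBVP.dBC2} separately, using equation~\eqref{eqn:LKdV2.GRSimplified} as the required identity. The uniform-in-$t$ limits for $b q_U(0,\argdot)+\partial_x q_U(0,\argdot)$ and $\beta q_U(0,\argdot)+\partial_{xx}q_U(0,\argdot)$ then reduce, after the same change of variables, to the uniform decay of iterated fractional operators of orders $1/3$ and $2/3$ applied to $R_U$, $bR_U$ and $\beta R_U$. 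The principal obstacle is the bookkeeping of the boundary terms at $t=0$ generated by the sequential Caputo operator $\CaputoSeq{1/3}{2}{R_U}$: the derivation between equations~\eqref{eqn:LKdV2.GRSimplified2} and~\eqref{eqn:LKdV2.FLODE} produces contributions of the form $\Caputo{1/3}{R_U}(0)$ and $R_U(0)$ sitting beside the Fourier kernel, all of which vanish because $R_U$ is a tail of a $\tfrac{1}{3}$ power series starting at index $U+1\geq 1$, so that the paper's convention $\Caputo{1/3}{R_U}(0)=0$ applies to $R_U$ exactly as it does to $y$ itself.
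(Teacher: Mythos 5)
Your overall strategy coincides with the paper's: items (1)--(3) are proposition~\ref{prop:SolRep2valid}, and (4)--(5) follow from theorems~\ref{thm:General.convergence} and~\ref{thm:General.Stage3} once equation~\eqref{eqn:LKdV2.FLODE} is known to have a unique $\frac13$ analytic solution. However, your direct sketch contains two steps that would fail as written. The more important one is the existence/uniqueness claim itself: you cannot simply ``invoke \cite[theorem~7.16]{KST2006a} applied to the sequential Caputo equation of total order $2/3$,'' because equation~\eqref{eqn:LKdV2.FLODE} contains the term $\Caputo{1/3}{by}(t)$ --- a Caputo derivative of the \emph{product} $by$, not a variable coefficient multiplying a sequential Caputo derivative of $y$ --- and so is not in the form required by the Frobenius framework of \cite[\S7.5]{KST2006a}. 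Since there is no simple product rule for Caputo derivatives, this is a genuine obstruction; the paper resolves it by expanding $by$ as the Cauchy product of the two $\frac13$ power series (Mertens's theorem guaranteeing convergence), after which the equation falls under \cite[theorem~7.18]{KST2006a}, the sequential analogue of theorem~7.16. This is precisely the step the paper singles out as nontrivial for this example, and your proposal passes over it.

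The second gap is in your convergence argument for $x>0$. The claim that ``pointwise-in-$x$ control follows from an $L_1$ bound on the convolution kernel, exactly as in the proof of theorem~\ref{thm:Heat.Convergence}'' does not survive the passage from $a=1$ to $a=\ri$: here $D_R$ has $N=2$ components whose boundary rays include directions where $\Re(\ri\la x)=0$ (e.g.\ the positive real $\la$ axis), so the kernels
\[
    \Phi_k(t;x)=\int_{-\infty}^\infty\re^{-\ri\rho t}\re^{\ri\re^{\ri\theta_k}\sqrt[3]{-\ri\rho}x}\D\rho
\]
are oscillatory rather than absolutely convergent and admit no $L_1$ bound. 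Theorem~\ref{thm:General.convergence} instead interprets $\Phi_k$ as a tempered distribution, which is exactly why item (4) asserts only pointwise-in-$x$ (not uniform-in-$x$) convergence for this problem, in contrast with the heat equation. Relatedly, the change of variables in the $E_U$ integral must use the maps $\theta_k=\tfrac13(\pi(2k-1)-\tfrac{\pi}{2})$, $k\in\{1,2\}$, which parametrise the two components of $\partial D_R\subset\CC^+$, not the map $\theta=-\pi/2$ you quote, which is the one used on $\clos(\CC^-)$ to derive the fractional ODE. The remaining ingredients of your sketch --- the compatibility check yielding $Y_0=q_0(0)$ and $Y_1=0$, the vanishing of the boundary terms $R_U(0)$ and $\Caputo{1/3}{R_U}(0)$, and the use of proposition~\ref{prop:BVattained} for item (5) --- are sound.
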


\begin{proof}
    The proof may be adapted from the proof of theorem~\ref{thm:Heat.Convergence} with reference to the derivation in~\S\ref{ssec:Eg.LKdV2}.
    
    Alternatively, note that the first three statements are proposition~\ref{prop:SolRep2valid}.
    As a lemma for the fourth statement, we claim that equation~\eqref{eqn:LKdV2.FLODE} has a unique solution.
    Indeed, although it is usually difficult to formulate a product rule for Caputo fractional derivatives, the $\frac{1}{3}$ analyticity assumption together with Mertens's theorem simplifies the reduction of equation~\eqref{eqn:LKdV2.FLODE} to one that may be considered under the framework of~\cite[theorem~7.18]{KST2006a}.
    Therefore, the fourth statement follows by an application of theorem~\ref{thm:General.convergence}.
    The fifth statement follows from theorem~\ref{thm:General.Stage3}.
\end{proof}

\section{General D to N map and convergence} \label{sec:General}

\begin{lem} \label{lem:Choicestheta}
    Suppose that $n$, $a$, and $N$ satisfy criterion~\eqref{eqn:n-a-N-criteria}.
    There are exactly $n-N$ values of $\theta$ for which both
    \begin{gather}
        \label{eqn:Choicestheta.1}
        \re^{\ri n\theta} = -1/a, \\
        \label{eqn:Choicestheta.2}
        -(2n-1)\pi/2n \leq \theta \leq -\pi/2n
    \end{gather}
    hold.
\end{lem}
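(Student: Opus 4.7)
The plan is to parametrise all solutions of~\eqref{eqn:Choicestheta.1} as a single arithmetic progression and count how many fall in the closed interval of~\eqref{eqn:Choicestheta.2}. Since $\abs{a}=1$, write $-1/a = \re^{\ri\phi_0}$ for an appropriate argument $\phi_0$; the solutions of~\eqref{eqn:Choicestheta.1} are then $\theta_k = (\phi_0 + 2k\pi)/n$, $k\in\ZZ$, equally spaced by $2\pi/n$. Because the interval in~\eqref{eqn:Choicestheta.2} has length $(n-1)\pi/n$, it contains $(n-1)/2$ spacings worth of real line, so a priori it admits either $\lfloor(n-1)/2\rfloor$ or $\lfloor(n-1)/2\rfloor+1$ values of $k$, with possible adjustment when an endpoint is hit exactly. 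The content of the lemma is that criterion~\eqref{eqn:n-a-N-criteria} has been designed precisely so that the count always lands on $n-N$.

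I would dispatch the three branches of~\eqref{eqn:n-a-N-criteria} separately. For $n$ odd, the choices $a = \pm\ri$ give $\phi_0 = \pm\pi/2$ and $\theta_k = (4k\pm 1)\pi/(2n)$, so~\eqref{eqn:Choicestheta.2} reduces to the elementary inequalities $-n/2 \leq k \leq -1/2$ when $a=\ri$ (yielding the $(n-1)/2 = n-N$ integers $k\in\{-(n-1)/2, \ldots, -1\}$), and $-(n-1)/2 \leq k \leq 0$ when $a = -\ri$ (yielding the $(n+1)/2 = n-N$ integers $k\in\{-(n-1)/2, \ldots, 0\}$). For $n$ even with $\Re(a) \geq 0$, I would write $a = \re^{\ri\alpha}$ with $\alpha \in [-\pi/2, \pi/2]$, so $\phi_0 = \pi - \alpha$ and $\theta_k = ((2k+1)\pi - \alpha)/n$. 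Substituting into~\eqref{eqn:Choicestheta.2} and simplifying reduces the count to the number of integers $k$ satisfying $-(2n+1)/4 + \alpha/(2\pi) \leq k \leq -3/4 + \alpha/(2\pi)$; with $n=2m$ and $\alpha/(2\pi)\in[-1/4,1/4]$, the lower bound lies in $[-m-1/2,-m]$ with ceiling $-m$, and the upper bound lies in $[-1,-1/2]$ with floor $-1$, giving $m = n/2 = n-N$ integers.

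The only delicacy, and the main obstacle, lies at the boundary values $\alpha = \pm\pi/2$ in the even branch, where an endpoint of~\eqref{eqn:Choicestheta.2} coincides exactly with some $\theta_k$ and the closed-interval convention becomes crucial; at $\alpha=\pi/2$ the lower bound is the integer $-m$ and at $\alpha=-\pi/2$ the upper bound is the integer $-1$, so a direct check confirms that in each edge configuration exactly one endpoint is attained and the count $n-N$ is preserved. Everything beyond this is arithmetic bookkeeping; the interest of the result is less the proof itself than the observation that criterion~\eqref{eqn:n-a-N-criteria} has been engineered precisely so that~\eqref{eqn:Choicestheta.2} admits the correct number of maps $\rho\mapsto\la$ needed by the subsequent D to N map reduction.
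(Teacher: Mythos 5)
Your proof is correct and follows essentially the same route as the paper's: both parametrise the $n$ solutions of~\eqref{eqn:Choicestheta.1} explicitly, split into the three branches of criterion~\eqref{eqn:n-a-N-criteria}, and count which members of the arithmetic progression land in the closed interval~\eqref{eqn:Choicestheta.2}, with the same attention to the endpoint cases $\arg(a)=\pm\pi/2$ in the even branch. The arithmetic in all three cases checks out, so there is nothing to add.
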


\begin{proof}
    We separate the argument into each of the three cases permissible under criteria~\eqref{eqn:n-a-N-criteria}.
    
    Suppose $n$ is odd and $a=-\ri$ so that $n-N=(n+1)/2$ and criterion~\eqref{eqn:Choicestheta.1} is equivalent to $\re^{\ri n\theta}=\re^{-\ri\pi/2}$.
    There are $n-N$ integers $r$ for which $0\leq r\leq n-N-1$.
    For such integers, $1\leq 4r+1 \leq 2n-1$.
    If, for such $r$, $\theta=-\pi(4r+1)/2n$, then $\theta$ satisfies both criteria~\eqref{eqn:Choicestheta.1} and~\eqref{eqn:Choicestheta.2}.
    Increasing $\theta$ by any noninteger multiple of $4\pi/2n$ would result in the failure of criterion~\eqref{eqn:Choicestheta.1}.
    Choosing any integer $r$ for which $0\leq r\leq n-N-1$ is false would falsify criterion~\eqref{eqn:Choicestheta.2}.
    
    Suppose instead that $n$ is odd but $a=\ri$, in which case $n-N=(n-1)/2$ and criterion~\eqref{eqn:Choicestheta.1} becomes $\re^{\ri n\theta}=\re^{\ri\pi/2}$.
    There are $n-N$ integers $r$ for which $1\leq r\leq n-N$, hence
    \[
        1 < 3 \leq 4r-1 \leq 2n-3 < 2n-1.
    \]
    Setting $\theta=-\pi(4r-1)/2n$ for each of those $n-N$ choices of $r$, we obtain $n-N$ values of $\theta$ satisfying both criteria.
    Other integers $r$ violate criterion~\eqref{eqn:Choicestheta.2} and noninteger $r$ violate criterion~\eqref{eqn:Choicestheta.1}.
    
    If $n$ is even then $n-N=n/2$ and there must exist some $\phi\in[-\pi/2,\pi/2]$ for which $a=\re^{\ri\phi}$.
    Defining $\phi'=2(\pi+\phi)/\pi$, we see that $\phi'\in[1,3]$ and criterion~\eqref{eqn:Choicestheta.1} reduces to $\re^{\ri n\theta}=\re^{-\ri\phi'\pi/2}$.
    There are $n-N$ integers $r$ such that $0 \leq r \leq n-N-1$.
    For each of those integers $r$,
    \[
        1 \leq 0+\phi' \leq 4r+\phi' \leq 2n-4+\phi' \leq 2n-1.
    \]
    For each of the $n-N$ such $r$, the choice $\theta=-\pi(4r+\phi')/2n$ satisfies both criteria.
    Shifting such $\theta$ by a noninteger multiple of $4r\pi/2n$ violates criterion~\eqref{eqn:Choicestheta.1}, and any other integer choice for $r$ fails criterion~\eqref{eqn:Choicestheta.2}.
\end{proof}

\begin{figure}
    \centering
    \includegraphics{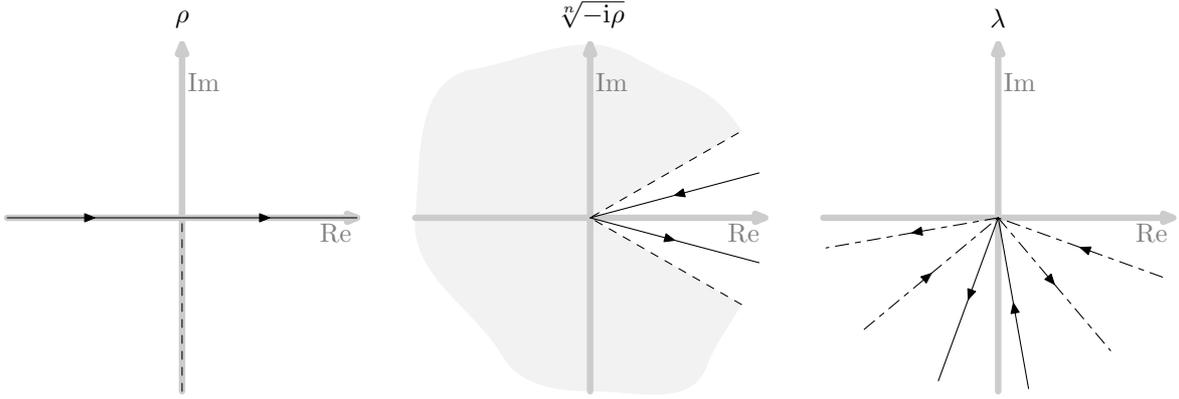}
    \caption{
        The maps $\rho\mapsto\la=\re^{\ri\theta_k}\sqrt[n]{-\ri\rho}$ in which $n=6$ and $a=\re^{\ri\pi/6}$ so that $\theta_4=-5\pi/36$, $\theta_5=-17\pi/36$, and $\theta_6=-29\pi/36$.
        The $n$\textsuperscript{th} root is biholomorphic between the unshaded regions of the first two diagrams and the function defining the map has a branch cut along the negative imaginary half axis.
    }
    \label{fig:General.maps}
\end{figure}

Lemma~\ref{lem:Choicestheta} guarantees the existence of $n-N$ maps $\la = \re^{\ri\theta}\sqrt[n]{-\ri\rho}$, $\la^n=\ri\rho/a$ for which criteria~\eqref{eqn:Choicestheta.1} and~\eqref{eqn:Choicestheta.2} both hold, as depicted in figure~\ref{fig:General.maps}.
We index $\theta=\theta_k$ by $k\in\{N+1,N+2,\ldots,n\}$.
In the global relation~\eqref{eqn:GR}, we apply each of those $n-N$ maps to obtain
\begin{equation} \label{eqn:General.GRChanged}
    \hat{q}_0\left( \re^{\ri\theta_k}\sqrt[n]{-\ri\rho} \right) - \re^{\ri\rho T} \hat{q}\left( \re^{\ri\theta_k}\sqrt[n]{-\ri\rho} ; T \right)
    = \sum_{j=0}^{n-1} c_j\left( \re^{\ri\theta_k}\sqrt[n]{-\ri\rho} \right) f_j\left( \re^{\ri\theta_k}\sqrt[n]{-\ri\rho} ; T \right)
\end{equation}
Applying the inverse Fourier transform to equation~\eqref{eqn:General.GRChanged}, and using proposition~\ref{prop:FTCaputoFiniteInterval} and lemma~\ref{lem:RemoveqT}, we obtain, for $k\in\{N+1,N+2,\ldots,n\}$,
\begin{multline} \label{eqn:General.FLODE}
    \sum_{j=0}^{n-1} c_j\left( \re^{\ri\theta_k} \right) \CaputoSeq{1/n}{(n-1-j)}{[\partial_x^jq](0,\argdot)} (t) \\
    = \frac{1}{2\pi} \int_{-\infty}^\infty \re^{-\ri\rho t} \left[ \hat{q}_0\left( \re^{\ri\theta_k}\sqrt[n]{-\ri\rho} \right) - \sum_{j=0}^{n-2} \frac{c_j\left( \re^{\ri\theta_k} \right) q_0^{(j)}(0)}{\left(\sqrt[n]{-\ri\rho}\right)^{j+1}}   \right] \D \rho.
\end{multline}
Combined with the dynamic boundary conditions~\eqref{eqn:IdBVP.dBC}, this provides a system of $n$ inhomogeneous variable coefficient sequential Caputo fractional linear ordinary differential equations in the $n$ functions $\partial_x^j q(0,\argdot)$.
We proceed, under the assumption that the system has a unique solution.

\begin{thm} \label{thm:General.convergence}
    Suppose that system~\eqref{eqn:General.FLODE}, together with the dynamic boundary conditions~\eqref{eqn:IdBVP.dBC} and compatibility conditions~\eqref{eqn:CompatCond}, has a unique solution; for the avoidance of confusion, we denote this solution by $y_j$ in place of $\partial_x^jq(0,\argdot)$.
    Suppose further that each $y_j$ is a $\frac{1}{n}$ analytic function at $0$ with partial sums $\M{y}{j}{U}$ and radius of convergence strictly greater than $T$.
    For $U\in\NN$ and $(x,t)\in[0,\infty)\times[0,T]$, let
    \[
        q_U(x,t) = \frac{1}{2\pi} \int_{-\infty}^\infty \re^{\ri\la x-a\la^nt} \hat{q}_0(\la) \D\la - \frac{1}{2\pi} \int_{\partial D_R} \re^{\ri\la x-a\la^nt} \sum_{j=0}^{n-1} c_j(\la) F[\M{y}{j}{U}](\la;T) \D\la.
    \]
    Suppose that $q$ is a solution of IdBVP~\eqref{eqn:IdBVP}.
    Then $q$ is the unique solution of IdBVP~\eqref{eqn:IdBVP} among functions satisfying~\eqref{eqn:SpaceForq} and the criteria of this theorem, and $q_U(x,t) \to q(x,t)$ as $U\to\infty$, uniformly in $t\in[0,T]$ and pointwise in $x\in[0,\infty)$.
    If also $n$ is even and $\Re(a)>0$, or $n$ is odd and $a=-\ri$, then, uniformly in $(x,t)\in[0,\infty)\times[0,T]$,
    \[
        \lim_{U\to\infty}q_U(x,t) = q(x,t).
    \]
\end{thm}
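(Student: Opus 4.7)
The plan is to adapt the proof of theorem~\ref{thm:Heat.Convergence} to the general setting. Unicity is immediate: if $q$ and $\tilde q$ are two solutions satisfying~\eqref{eqn:SpaceForq} and the criteria of the theorem, then by the derivation of system~\eqref{eqn:General.FLODE} both families $\partial_x^j q(0,\argdot)$ and $\partial_x^j\tilde q(0,\argdot)$ satisfy that system together with~\eqref{eqn:IdBVP.dBC} and~\eqref{eqn:CompatCond}; the assumed unicity of the coupled system forces the two families to coincide, whence $q=\tilde q$ via the Ehrenpreis form~\eqref{eqn:SolRep2}. For the convergence claim, I would introduce the tails $R_{U,j}(t) = y_j(t) - \M{y}{j}{U}(t)$. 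Since each $y_j$ is a $\frac{1}{n}$ power series with radius strictly exceeding $T$, and since a tail starting at $t^{(U+1)/n}$ vanishes at $0$ together with all its sequential Caputo derivatives of the relevant orders, the sequences $R_{U,j}$ together with every fixed Caputo derivative or Riemann-Liouville fractional integral converge uniformly to $0$ on $[0,T]$, using~\cite[theorem~2.7]{Die2010a} for the latter. Linearity of the representation in the $y_j$ gives $q_U(x,t) = q(x,t) - E_U(x,t)$ with
\[
2\pi E_U(x,t) = \int_{\partial D_R} \re^{\ri\la x - a\la^n t}\sum_{j=0}^{n-1} c_j(\la)\,F[R_{U,j}](\la;T)\,\D\la.
\]

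To handle $x=0$ I would deform $\partial D_R$ onto the $n-N$ rays supplied by lemma~\ref{lem:Choicestheta}, performing the change of variables $\la = \re^{\ri\theta_k}\sqrt[n]{-\ri\rho}$, $a\la^n = -\ri\rho$ on each piece. Proposition~\ref{prop:FTCaputoFiniteInterval} then converts each $\la^r F[R_{U,j}](\la;T)$ into the temporal Fourier transform of a suitable sequential Caputo derivative of $R_{U,j}$, plus boundary contributions at $0$ and $T$; the terms at $T$ vanish under the subsequent inverse Fourier transform by lemma~\ref{lem:RemoveqT}, and the terms at $0$ vanish because $R_{U,j}$ and its Caputo derivatives are zero there. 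The Fourier inversion theorem therefore exhibits $E_U(0,t)$ as a finite linear combination of Caputo derivatives and Riemann-Liouville fractional integrals of the $R_{U,j}$, each of which tends to $0$ uniformly in $t\in[0,T]$. Carrying an additional $\la^r$ multiplier through the same computation, and justifying differentiation under the integral as in proposition~\ref{prop:SolRep2valid}.\ref{itm:SolRep2valid.Convergence}, yields $\partial_x^r E_U(0,t)\to 0$ uniformly in $t$ for each $r\in\{0,1,\ldots,n-1\}$.

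For $x>0$ the same change of variables recasts $E_U$ as a finite sum of convolutions $\Phi_{k,j}(\argdot;x) * \mathcal{R}_{U,j}$, where each $\mathcal{R}_{U,j}$ packages the fractional operators applied to $R_{U,j}$ and
\[
\Phi_{k,j}(t;x) = \int_{-\infty}^\infty \rho^{s_{k,j}}\,\re^{\ri\re^{\ri\theta_k}\sqrt[n]{-\ri\rho}\,x - \ri\rho t}\,\D\rho
\]
for integers $s_{k,j}$ arising from the Jacobian and the $c_j(\la)$. Under the hypothesis that $n$ is even with $\Re(a)>0$ or $n$ is odd with $a=-\ri$, every admissible $\theta_k$ forces $\Im(\re^{\ri\theta_k}\sqrt[n]{-\ri\rho})>0$ for all real $\rho\neq0$, so $\lVert\Phi_{k,j}(\argdot;x)\rVert_1 \leq C\,x^{-m_{k,j}}$ with $m_{k,j}>0$ by the same computation as in the heat proof. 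Hence $\lvert E_U(x,t)\rvert \leq C\,x^{-m}\,\max_j\lVert\mathcal{R}_{U,j}\rVert_\infty \to 0$ uniformly on $[x_0,\infty)\times[0,T]$ for every $x_0>0$, and equicontinuity at $x=0$ furnished by the uniform bound on $\partial_x E_U(0,t)$ allows the two estimates to be glued, giving full uniform convergence on $[0,\infty)\times[0,T]$. In the remaining case at least one admissible $\theta_k$ makes $\Im(\re^{\ri\theta_k}\sqrt[n]{-\ri\rho})$ change sign and $\Phi_{k,j}(\argdot;x)$ leaves $L^1$; dominated convergence applied directly to the bounded $\mathcal{R}_{U,j}$ still yields pointwise in $x$ convergence, uniformly in $t$. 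The principal obstacle is precisely this dichotomy: the dissipative case admits a clean convolution estimate, whereas in the dispersive case one has to extract uniformity in $t$ from an oscillatory kernel without the benefit of $L^1$ decay in $x$; keeping track of the $n-N$ ray contributions and the various fractional calculus conversions simultaneously is the main source of algebraic bookkeeping.
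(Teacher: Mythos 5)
Your overall architecture tracks the paper's: unicity from the assumed unicity of the coupled fractional system, the tail decomposition $q_U=q-E_U$, Fourier inversion at $x=0$, a convolution estimate for $x>0$ with an $L^1$ bound on the kernel in the dissipative case, and equicontinuity of $E_U$ at $x=0$ to glue the two regimes. However, there are two concrete problems. The first concerns the change of variables. The contour $\partial D_R$ lies in $\clos(\CC^+)$ and has $N$ connected components; the paper parametrises the $k$\textsuperscript{th} component by $\la=\re^{\ri\theta_k}\sqrt[n]{-\ri\rho}$ with $\theta_k=\tfrac{1}{n}(\pi(2k-1)-\arg a)$, $k\in\{1,\ldots,N\}$, i.e.\ the $N$ solutions of $\re^{\ri n\theta}=-1/a$ in the upper half plane. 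The $n-N$ rays of lemma~\ref{lem:Choicestheta} are the remaining solutions, lying in the lower half plane; they send the real $\rho$-line into $\clos(\CC^-)$, which is exactly why they pair with the global relation and lemma~\ref{lem:RemoveqT} in deriving system~\eqref{eqn:General.FLODE} --- but $\partial D_R$ cannot be deformed onto them (for $x>0$ the factor $\re^{\ri\la x}$ grows in $\CC^-$), and the count $n-N$ versus $N$ is wrong (these coincide only when $N=n/2$, as in the heat example you are extrapolating from). Relatedly, once the Jacobian $\D\la\propto\left(\sqrt[n]{-\ri\rho}\right)^{1-n}\D\rho$ is included, the multipliers $c_j(\la)\,\D\la$ carry \emph{negative} powers $\left(\sqrt[n]{-\ri\rho}\right)^{-j}$, so by proposition~\ref{prop:FTRLFracInt} the quantity $E_U(0,t)$ is a combination of Riemann--Liouville fractional \emph{integrals} of the tails, with no boundary terms at all; your detour through proposition~\ref{prop:FTCaputoFiniteInterval}, sequential Caputo derivatives, and lemma~\ref{lem:RemoveqT} is both unnecessary and, on the rays you chose, inapplicable.

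The second problem is the dispersive case, which you flag as the principal obstacle but do not actually resolve. When the stronger hypothesis on $a$ fails, the $\rho$-integral defining $E_U(x,t)$ is only conditionally convergent: the inner integral $\int_0^T\re^{\ri\rho s}\M{\mathcal{R}}{k}{U}(s)\D s$ decays only like a power of $\rho^{-1}$ against a unimodular oscillatory kernel, so there is no integrable dominating function and ``dominated convergence applied directly to the bounded $\mathcal{R}_{U,j}$'' does not pass the limit inside the integral. The paper's resolution is to note that $\re^{\ri\re^{\ri\theta_k}\sqrt[n]{-\ri\rho}x}$ is bounded and continuous in $\rho$, hence a tempered distribution, so that $\Phi_k(\argdot;x)$ is a tempered distribution and $E_U(x,\argdot)$ is a sum of convolutions of tempered distributions with compactly supported functions $\M{\mathcal{R}}{k}{U}$ converging to $0$ uniformly together with all their derivatives; this yields convergence uniform in $t$ for each fixed $x$. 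Some argument of this kind (or an explicit integration by parts in $\rho$ to manufacture absolute convergence) is needed before the pointwise-in-$x$ claim can be considered proved.
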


\begin{proof}
    By the above argument, if $q$ is a solution of problem~\eqref{eqn:IdBVP}, then its boundary values satisfy the same system as $y_j$.
    By hypothesis, this system has a unique solution, so $q$ must also be unique.
    
    Let $\M{R}{j}{U}$ be the tails of the fractional power series so that $y_j=\M{y}{j}{U}-\M{R}{j}{U}$.
    Then $\M{R}{j}{U}(t)\to0$ uniformly in $t\in[0,T]$ and, by linearity, the uniform limit we aim to prove is equivalent to $E_U(x,t)\to0$ uniformly in $(x,t)\in[0,\infty)\times[0,T]$, for
    \[
        2\pi E_U(x,t) = \int_{\partial D_R} \re^{\ri\la x - a\la^nt} \sum_{j=0}^{n-1} c_j(\la) F[\M{R}{j}{U}](\la;T) \D\la.
    \]
    
    It follows from its definition that $D_R$ is the disjoint union of the $N$ sectorial connected components
    \begin{equation} \label{eqn:General.sectorsDR}
        D_R = \bigcup_{k=1}^N \big\{ \la\in\CC : \lvert\la\rvert>R \mbox{ and } \tfrac{1}{n}\left(\tfrac{\pi}{2}(4k-3)-\arg(a)\right)<\arg(\la)<\tfrac{1}{n}\left(\tfrac{\pi}{2}(4k-1)-\arg(a)\right) \big\}.
    \end{equation}
    In the integral over the $k$\textsuperscript{th} connected component of $\partial D_R$, we apply the map $\la=\re^{\ri\theta_k}\sqrt[n]{-\ri\rho}$, $\la^n=\ri\rho/a$ for $\theta_k=\frac{1}{n}(\pi(2k-1)-\arg(a))$ to obtain
    \begin{multline*}
        2\pi E_U(x,t) = \\
        -\ri \int_{-\infty}^\infty \left(\sqrt[n]{-\ri\rho}\right)^{-(n-1)} \sum_{k=1}^N \re^{\ri\theta_k} \re^{\ri\re^{\ri\theta_k}\sqrt[n]{-\ri\rho} x - \ri\rho t} \sum_{j=0}^{n-1} c_j\left(\re^{\ri\theta_k}\sqrt[n]{-\ri\rho}\right) \int_0^T \re^{\ri\rho s} \M{R}{j}{U}(s) \D s \D\rho \\
        = - \int_{-\infty}^\infty \sum_{k=1}^N \re^{\ri\re^{\ri\theta_k}\sqrt[n]{-\ri\rho} x - \ri\rho t} \sum_{j=0}^{n-1} \frac{1}{\left(\ri\re^{\ri\theta_k}\sqrt[n]{-\ri\rho}\right)^{j}} \int_0^T \re^{\ri\rho s} \M{R}{j}{U}(s) \D s \D\rho \\
        = \int_{-\infty}^\infty \re^{-\ri\rho t} \sum_{k=1}^N \re^{\ri\re^{\ri\theta_k}\sqrt[n]{-\ri\rho} x} \int_0^T \re^{\ri\rho s} \M{\mathcal{R}}{k}{U}(s) \D s \D\rho,
    \end{multline*}
    in which
    \begin{equation} \label{eqn:General.convergence.defnRkU}
        \M{\mathcal{R}}{k}{U}(t) = -\sum_{j=0}^{n-1} \left(\ri\re^{\ri\theta_k}\right)^{-j} \left(\Msup{I}{0}{+}{j/n}\M{R}{j}{U}\right)(t).
    \end{equation}
    Each of the Riemann-Liouville fractional integrals of $\M{R}{j}{U}$ on the right of equation~\eqref{eqn:General.convergence.defnRkU} is the tail of a $\frac{1}{n}$ power series with the same radius of convergence strictly greater than $T$.
    Hence, for each $k\in\{1,2,\ldots,N\}$, $\M{\mathcal{R}}{k}{U}(t)\to0$ as $U\to\infty$, uniformly in $t\in[0,T]$.
    
    At $x=0$, the usual Fourier inversion theorem establishes that $E_U(0,t)=\sum_{k=1}^N \M{\mathcal{R}}{k}{U}(t)$, so $E_U(0,t)\to0$ as $U\to\infty$, uniformly in $t\in[0,T]$.
    
    For any fixed $x$,
    \[
        E_U(x,t) = \sum_{k=1}^N \left( \Phi_k(\argdot;x) * \M{\mathcal{R}}{k}{U} \right) (t),
    \]
    for
    \[
        \Phi_k(t;x) = \int_{-\infty}^\infty \re^{-\ri\rho t} \re^{\ri\re^{\ri\theta_k}\sqrt[n]{-\ri\rho} x} \D\rho.
    \]
    For any $x$, $\re^{\ri\re^{\ri\theta_k}\sqrt[n]{-\ri\rho} x}$ is a continuous and bounded function of $\rho$.
    Therefore it is a tempered distribution, and $\Phi_k$ is also a tempered distribution.
    So $E_U(x,t)$ is (a sum of) convolutions of tempered distributions with functions converging, uniformly in $t\in[0,T]$, along with all their derivatives, to $0$.
    Therefore, for each $x\geq0$, $E_U(x,t)\to0$ as $U\to\infty$ uniformly in $t\in[0,T]$.
    This completes the proof of the first convergence claim.
    
    The stronger assumption on $a$ is sufficient to ensure that, for all nonzero $\rho$, $\Re(\ri\re^{\ri\theta_k}\sqrt[n]{-\ri\rho})<0$ and, in the limit $\rho\to\pm\infty$, $\Re(\ri\re^{\ri\theta_k}\sqrt[n]{-\ri\rho})\to-\infty$.
    Therefore, the integral defining $\Phi_k$ converges as a Lebesgue integral and need not be interpreted in a distributional sense.
    By equation~\eqref{eqn:General.sectorsDR},
    \[
        0<m:=\min\left\{\arg\left(\ri\re^{\ri\theta_k}\sqrt[n]{\mp\ri}\right):k\in\{1,2,\ldots,N\}\right\}.
    \]
    We use this to bound
    \[
        \left\lVert \Phi_k(\argdot;x) \right\rVert_1 \leq \frac{2Tn!}{(mx)^n}.
    \]
    It follows that
    \[
        \left\lvert E_U(x,t) \right\rvert \leq \frac{2Tn!}{(mx)^n} \sum_{k=1}^N \left\lVert \M{\mathcal{R}}{k}{U} \right\rVert_\infty
    \]
    If $x_0>0$ is fixed, then the right converges, uniformly in $(x,t)\in[x_0,\infty)\times[0,T]$, to $0$ as $U\to\infty$.
    
    By proposition~\ref{prop:SolRep2valid}.\ref{itm:SolRep2valid.Convergence}, with $q_0=0$ and $\phi_j=\M{R}{j}{U}$, the $x$ derivative of $E_U$ is given by differentiating under the integral, so
    \[
        2\pi\partial_xE_U(0,t) = \int_{-\infty}^\infty \sqrt[n]{-\ri\rho} \re^{-\ri\rho t} \sum_{k=1}^N \ri\re^{\ri\theta_k} \int_0^T\re^{\ri\rho s}\M{\mathcal{R}}{k}{U}(s)\D s \D \rho
        = \sum_{k=1}^N \ri\re^{\ri\theta_k} \Caputo{1/n}{\M{\mathcal{R}}{k}{U}}(t).
    \]
    The latter converges as $U\to\infty$ uniformly in $t\in[0,T]$, so $E_U(x,t)$ is equicontinuous in $(t,U)\in[0,T]\times\NN$ at $x=0$.
    Hence $E_U\to0$ as $U\to\infty$ uniformly in $(x,t)\in[0,\infty)\times[0,T]$.
\end{proof}

\begin{thm} \label{thm:General.Stage3}
    Suppose that $\{(\M{y}{j}{U})_{U\in\NN}:j\in\{0,1\ldots,n-1\}\}$ is the set of partial sums whose limits are the $\frac{1}{n}$ power series $\{y_j:j\in\{0,1\ldots,n-1\}\}$ which satisfy simultaneously the system of sequential Caputo fractional differential equations~\eqref{eqn:General.FLODE} and the dynamic boundary conditions~\eqref{eqn:IdBVP.dBC}.
    Supposing that such a solution set exists, there exists a solution to IdBVP~\eqref{eqn:IdBVP}.
    Moreover, for each $k\in\{1,2,\ldots,N\}$, uniformly in $t\in[0,T]$,
    \[
        \lim_{U\to\infty}\left( \sum_{j=0}^{n-1} \M{b}{k}{j}(t) \M{y}{j}{U}(t) \right) = h_k(t),
    \]
    so that, in the limit $U\to0$, dynamic boundary conditions~\eqref{eqn:IdBVP.dBC} are recovered.
\end{thm}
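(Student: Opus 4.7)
The strategy mirrors that of theorem~\ref{thm:HeatStage3}, adapted to the higher spatial order and multiple boundary forms. I first define the candidate solution via the Ehrenpreis form using the hypothesised solutions $y_j$:
\[
    q(x,t) = \frac{1}{2\pi} \int_{-\infty}^\infty \re^{\ri\la x-a\la^n t} \hat{q}_0(\la) \D\la - \frac{1}{2\pi} \int_{\partial D_R} \re^{\ri\la x-a\la^n t} \sum_{j=0}^{n-1} c_j(\la) F[y_j](\la;T) \D\la,
\]
which is the natural pointwise $U\to\infty$ limit of $q_U$ from theorem~\ref{thm:General.convergence}. The compatibility $y_j(0) = q_0^{(j)}(0)$ sits among the initial data of system~\eqref{eqn:General.FLODE} and condition~\eqref{eqn:CompatCond}, so proposition~\ref{prop:SolRep2valid} with $\phi_j = y_j$ immediately yields that $q$ satisfies PDE~\eqref{eqn:IdBVP.PDE} and IC~\eqref{eqn:IdBVP.IC}.

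The crucial step is to show that $q$ satisfies the dynamic boundary conditions~\eqref{eqn:IdBVP.dBC}. For this I would invoke proposition~\ref{prop:BVattained} with $\phi_j = y_j$, whose hypothesis requires, for each $t\in[0,T]$, some $\gamma(\argdot;t) \in L_1[0,\infty)$ for which
\[
    \hat{q}_0(\la) - \re^{a\la^n t}\hat\gamma(\la;t) = \sum_{j=0}^{n-1} c_j(\la) F[y_j](\la;t).
\]
The derivation of~\eqref{eqn:General.FLODE} from global relation~\eqref{eqn:GR} is reversible: the hypothesis that each $y_j$ satisfies the $k$\textsuperscript{th} FLODE equation of~\eqref{eqn:General.FLODE} for all $t\in[0,T]$ is equivalent, upon temporal Fourier transform and application of proposition~\ref{prop:FTCaputoFiniteInterval} together with lemma~\ref{lem:RemoveqT}, to the above identity restricted to the curve $\la = \re^{\ri\theta_k}\sqrt[n]{-\ri\rho}$. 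Both sides of the identity are holomorphic in $\la$ throughout $\CC^-$ (Paley-Wiener for $\hat{q}_0$ and $\hat\gamma$; the entirety of $F[y_j]$); each of these curves lies in $\clos(\CC^-)$ by the range of $\theta_k$ established in lemma~\ref{lem:Choicestheta}; so the identity theorem extends the agreement to all of $\clos(\CC^-)$. The $L_1$ integrability of $\gamma$ then follows from Schwartz class decay of $\hat{q}_0$, the $\lvert\la\rvert^{-1}$ decay of each $c_j(\la)F[y_j](\la;t)$ obtained by integration by parts, and the exponential decay of $\re^{-a\la^n t}$ in the relevant sectors. Proposition~\ref{prop:BVattained} then gives $\partial_x^jq(0,t) = y_j(t)$, whence, since $y_j$ satisfies~\eqref{eqn:IdBVP.dBC} by hypothesis, so does $q$. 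This establishes the existence of a solution to IdBVP~\eqref{eqn:IdBVP}.

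For the convergence claim, I would exploit linearity. Because $y_j$ satisfies~\eqref{eqn:IdBVP.dBC},
\[
    \sum_{j=0}^{n-1} \M{b}{k}{j}(t) \M{y}{j}{U}(t) = h_k(t) - \sum_{j=0}^{n-1} \M{b}{k}{j}(t) \M{R}{j}{U}(t),
\]
where $\M{R}{j}{U}(t) = y_j(t) - \M{y}{j}{U}(t)$ is the $U$ tail of the $\frac{1}{n}$ power series for $y_j$. Because the radius of convergence strictly exceeds $T$, each $\M{R}{j}{U}(t) \to 0$ as $U\to\infty$, uniformly in $t\in[0,T]$; because each $\M{b}{k}{j}$ is $\frac{1}{n}$ analytic on $[0,T]$ hence bounded there, the second sum tends to $0$ uniformly in $t$, giving the claimed uniform recovery.

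The principal obstacle is the $L_1$ construction of $\gamma$. In the heat equation case this is assembled from a single half derivative identity on a single ray; here one must show that the $n-N$ FLODE identities on the distinct curves of figure~\ref{fig:General.maps} combine, via analytic continuation, into a single global relation on $\clos(\CC^-)$, and that the resulting $\hat\gamma$ has sufficient decay for the inverse half line Fourier transform to lie in $L_1[0,\infty)$. Care is also needed with the branch structure of $\sqrt[n]{-\ri\rho}$ and with verifying that the boundary terms from proposition~\ref{prop:FTCaputoFiniteInterval}, which cancel via Jordan's lemma during the forward derivation, do not obstruct the reversal.
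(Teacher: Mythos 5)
Your existence argument coincides with the paper's: reverse the derivation of system~\eqref{eqn:General.FLODE} to recover a global-relation identity for some $\gamma$, invoke proposition~\ref{prop:BVattained} to conclude $\partial_x^jq(0,t)=y_j(t)$, and hence that $q$ defined by the Ehrenpreis form inherits the dynamic boundary conditions from the $y_j$; the PDE and IC come from proposition~\ref{prop:SolRep2valid}. Your closing remarks on analytic continuation from the $n-N$ rays to $\clos(\CC^-)$ and on the $L_1$ construction of $\gamma$ identify the right technical concerns, and the paper is no more detailed there than you are.

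The gap is in the convergence claim. You read the displayed limit literally as a statement about the partial sums $\M{y}{j}{U}$, for which $\sum_j\M{b}{k}{j}(t)\M{y}{j}{U}(t)=h_k(t)-\sum_j\M{b}{k}{j}(t)\M{R}{j}{U}(t)$ makes the result an immediate consequence of the uniform vanishing of the tails. But the content the theorem is meant to carry --- as its final clause (``dynamic boundary conditions are recovered'') and its analogue, theorem~\ref{thm:HeatStage3}, make clear --- is that the computable approximations $q_U$ asymptotically satisfy the boundary conditions, i.e.\ $\sum_j\M{b}{k}{j}(t)\,\partial_x^jq_U(0,t)\to h_k(t)$ uniformly. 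Your shortcut does not reach this: proposition~\ref{prop:BVattained} applies to the exact solutions $y_j$, which satisfy the global relation, but not to the partial sums $\M{y}{j}{U}$, which satisfy it only approximately, so $\partial_x^jq_U(0,t)\neq\M{y}{j}{U}(t)$ in general. The discrepancy is $\partial_x^jE_U(0,t)$, and the substance of the paper's proof is showing that
\[
    \M{\mathcal{E}}{k}{U}(t) = \int_{\partial D_R} \re^{-a\la^nt} \sum_{r=0}^{n-1}(\ri\la)^r\M{b}{k}{r}(t) \int_0^T \re^{a\la^ns} \sum_{j=0}^{n-1}c_j(\la)\M{R}{j}{U}(s) \D s \D\la
\]
tends to $0$ uniformly in $t\in[0,T]$. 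That step requires the change of variables $\la=\re^{\ri\theta_k}\sqrt[n]{-\ri\rho}$ and the identification of the resulting pseudodifferential operators acting on the tails $\M{R}{j}{U}$ as Riemann--Liouville fractional integrals and Caputo fractional derivatives, whose uniform convergence to $0$ is inherited from that of $\M{R}{j}{U}$ and its derivative via~\cite[theorem~2.7]{Die2010a} --- precisely the mechanism of theorems~\ref{thm:HeatStage3} and~\ref{thm:General.convergence}. Your proposal omits this analysis entirely, so the meaningful half of the convergence claim is unproved.
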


\begin{proof}
    Because the solution set $\{y_j:j\in\{0,1\ldots,n-1\}\}$ satisfy equations~\eqref{eqn:General.FLODE}, we can apply the derivation of the fractional differential equations in reverse to justify the existence of a function $\gamma$ whose Fourier transform satisfies
    \[
        \hat{q}_0(\la) - \re^{a\la^nt}\hat{\gamma}(\la) = \sum_{j=0}^{n-1}c_j(\la)F[y_j](\la).
    \]
    Now proposition~\ref{prop:BVattained} implies that, if we define $q$ by
    \[
        q(x,t) = \frac{1}{2\pi}\int_{-\infty}^\infty \re^{\ri\la x-a\la^nt} \hat{q}_0(\la) \D\la - \frac{1}{2\pi}\int_{\partial D_R} \re^{\ri\la x-a\la^nt} \sum_{j=0}^{n-1} c_j(\la) F[y_j](\la) \D\la,
    \]
    then $\partial_x^jq(0,t) = y_j(t)$.
    But the solution set also satisfies the dynamic boundary conditions, so also must $q$.
    
    By construction, $q$ differs from $q_U$ only by $E_U$, in which
    \[
        2\pi E_U(x,t) = \int_{\partial D_R} \re^{\ri\la x-a\la^nt} \int_0^T \re^{a\la^ns} \sum_{j=0}^{n-1}c_j(\la)\M{R}{j}{U} \D s \D\la,
    \]
    and $\M{R}{j}{U}$ is the $U$ tail of $y_j$:
    \[
        y_j=\M{y}{j}{U}+\M{R}{j}{U}.
    \]
    So it is equivalent to prove that, for each $k\in\{1,2,\ldots,N\}$ and uniformly in $t\in[0,T]$, $\M{\mathcal{E}}{k}{U}(t)\to0$ as $U\to\infty$, where
    \[
        \M{\mathcal{E}}{k}{U}(t) = \int_{\partial D_R} \re^{-a\la^nt} \sum_{r=0}^{n-1}(\ri\la)^r\M{b}{k}{r}(t) \int_0^T \re^{a\la^ns} \sum_{j=0}^{n-1}c_j(\la)\M{R}{j}{U} \D s \D\la.
    \]
    The proof now proceeds along the lines of the proof of theorem~\ref{thm:HeatStage3}, modified slightly as was the proof of theorem~\ref{thm:General.convergence} from the proof of theorem~\ref{thm:Heat.Convergence}.
\end{proof}

\section{Conclusion} \label{sec:Conclusion}

We have studied half line initial dynamic boundary value problems of class~\eqref{eqn:IdBVP}, with arbitrary spatial order and arbitrary linear boundary conditions.
We have detailed a method for obtaining any existing solution of such a problem, via contour integrals of fractional power series, and justified that the error inherrent in approximating the power series by their partial sum decays uniformly.
We also gave a criterion for establishing existence.
For each of the specific example problems~\eqref{eqn:HeatIdBVP}--\eqref{eqn:LKdV2IdBVP}, we have provided a recurrence relation for the coefficients in the fractional power series, thus explicitly solving each IdBVP.

\begin{rmk}
    There are certainly situations in which it is not possible to solve the system given by equations~\eqref{eqn:IdBVP.dBC} and~\eqref{eqn:General.FLODE}.
    An example that may occur even for static boundary conditions is failure of linear independence of the (dynamic) boundary conditions.
    Even beyond such algebraic considerations, the authors are not aware of a higher order systems fractional Frobenius theory for sequential Caputo fractional derivatives in the vein of~\cite[\S7.5]{KST2006a}, even at ordinary points.
    
    Classification of systems of dynamic boundary conditions yielding well posed problems, comparable to~\cite{Smi2012a}, along with the underlying sequential Caputo fractional Frobenius systems theory, is left for future work.
    There is a subtle question to investigate here: can an IdBVP be well posed even if at some instant $t\in[0,T]$ the dynamic boundary conditions are not linearly independent?
\end{rmk}

\begin{rmk}
    The present work generalises the Fokas transform method to IdBVP on the half line.
    The Fokas transform method has been successfully applied to static IBVP on the finite interval~\cite{FP2001a,Smi2012a}, and also to interface~\cite{DPS2014a,SS2015a,DS2015a,DSS2016a,DS2020a,STV2019a}, multipoint~\cite{FM2013a,PS2018a}, and nonlocal BVP~\cite{MS2018a}, so one might expect that the present results could be generalised to study also IdBVP on such spatial domains.
    Unfortunately, this will be a more complicated task than was the generalisation from half line to finite interval IBVP.
    Indeed, a dynamic boundary condition for an equation on the finite interval does not reduce the global relation to a fractional ordinary differential equation, but to a rather more complicated pseudodifferential equation.
\end{rmk}

\begin{rmk}
    Spatially lower order terms are absent from~\eqref{eqn:IdBVP.PDE}.
    It may be possible to extend the arguments and results of this work to accommodate such equations.
    Although the Ehrenpreis form usually associated with such problems does not immediately evoke Fourier transforms of fractional differential operators, under the change of variables in~\cite{DS2020a,STV2019a}, it may be possible to recover fractional differential equations.
\end{rmk}

\begin{rmk}
    The classical Fourier $b$ transform method (see, for example,~\cite[\S5.1.4]{Pin2011a}) is a synthesis of the spatial Fourier transform with the method of images, tailored to static Robin problems.
    It can be used to solve problem~\eqref{eqn:HeatIdBVP} provided $b$ is constant.
    Indeed, for $F_{\mathrm{c}}$ and $F_{\mathrm{s}}$ the usual Fourier cosine and sine transforms, the Fourier $b$ transform is defined as
    \[
        \widetilde{F}[\phi](\la) = \frac{\ri\la F_{\mathrm{c}}[\phi](\la)-\ri b F_{\mathrm{s}}[\phi](\la)}{2(b+\ri\la)}
    \]
    and has an inversion formula valid wherever the Fourier sine and cosine inversion formulae are valid.
    This transform has the property that $\widetilde{F}[-\phi''](\la)=\la^2 \widetilde{F}[\phi](\la)$ for all functions $\phi$ for which $\phi,\phi',\phi''\in L_1[0,\infty)$ and $b\phi(0)+\phi'(0)=0$.
    Therefore, applying this transform to~\eqref{eqn:HeatIdBVP.PDE}, one obtains
    \[
        \widetilde{F}[\partial_tq(\argdot,t)](\la) + \la^2 \widetilde{F}[q(\argdot,t)](\la)=0.
    \]
    Under a mild regularity assumption, it is possible to interchange the spatial Fourier $b$ transform with the temporal partial derivative to obtain a temporal ordinary differential equation for $\widetilde{F}[q(\argdot,t)](\la)$ valid for all $\la$.
    Hence, one proceeds in the usual fashion to solve that differential equation and apply the inverse transform to yield a solution of the original problem.
    The step that fails for time dependent $b$ is the interchange of transform and derivative.
    Although the transform is always in the spatial variable, if $b$ is time dependent, then the kernel of the transform is time dependent, so $\widetilde{F}[\partial_tq(\argdot,t)](\la)\neq\partial_t\widetilde{F}[q(\argdot,t)](\la)$, and the argument cannot proceed in the usual way.
    
    The Fokas transform method is often seen as a tool for deriving the precise spatial transform pair tailored to solving a particular IBVP: the transform pair for which any function in the kernel of the boundary form will yield no boundary term in the relevant transform of derivative formula.
    Therefore, it would be interesting to attempt to save the Fourier $b$ transform argument, perhaps using some kind of integrating factor, to find whether it can yield a similar solution representation to that presented above.
\end{rmk}

\begin{rmk}
    From a spectral theoretic viewpoint, the Fokas transform method employs the transform pair that diagonalises the ordinary differential operator that describes the spatial part of an IBVP~\cite{Smi2015a,PS2016a}.
    It would be interesting to understand how this spectral perspective applies to IdBVP in which the spatial ordinary differential operator is time dependent, albeit formally independent of time.
    The results for IBVP extend to finite interval problems~\cite{FS2016a,ABS2020a}, so this may also be an avenue for exploring an extension of the Fokas transform method for IdBVP to finite interval IdBVP.
\end{rmk}

\section*{Acknowledgement}

Smith would like to thank the Isaac Newton Institute for Mathematical Sciences for support and hospitality during programme \emph{Complex analysis: techniques, applications and computations}, when work on this paper was undertaken.
This work was supported by EPSRC Grant Number EP/R014604/1.
Smith gratefully acknowledges support from Yale-NUS College project B grant IG18-PRB102.
Toh was partially supported by the Yale-NUS College Summer Research Programme.

\appendix

\section{Fourier transforms of Caputo fractional derivatives} \label{sec:CaputoFourier}

We recall the definitions and theorems associated with Caputo fractional derivatives and their Fourier transforms.

\begin{defn} \label{defn:Caputo}
    For $\alpha\in(0,1]$, the \emph{(left sided half axis) Riemann-Liouville fractional integral operator} $\Msup{I}{0}{+}{\alpha}$ is defined by
    \[
        \left( \Msup{I}{0}{+}{\alpha} y \right) (x) := \frac{1}{\Gamma(\alpha)} \int_0^x \frac{y(t)}{(x-t)^{1-\alpha}}\D t.
    \]
    wherever the integral converges.
    For $\alpha\in(0,1)$, the \emph{(left sided half axis) Caputo fractional derivative operator} $\Caputo{\alpha}{y}$ is defined by
    \[
        \Caputo{\alpha}{y}(x) = \left( \Msup{I}{0}{+}{1-\alpha} \frac{\D}{\D x} y \right) (x),
    \]
    wherever it converges.
    For nonnegative integers $n$, the $n$th \emph{sequential (left sided half axis) Caputo fractional derivative operator} $\CaputoSeq{\alpha}{n}{y}$ is defined inductively by $\prescript{\ensuremath{\mathrm{C}}}{}{\ensuremath{\mathrm{D}}}_{0\Mspacer+}^{\alpha\Mspacer1} = \prescript{\ensuremath{\mathrm{C}}}{}{\ensuremath{\mathrm{D}}}_{0\Mspacer+}^{\alpha}$
    and, for $n\geq2$,
    $\prescript{\ensuremath{\mathrm{C}}}{}{\ensuremath{\mathrm{D}}}_{0\Mspacer+}^{\alpha\Mspacer{n}} = \prescript{\ensuremath{\mathrm{C}}}{}{\ensuremath{\mathrm{D}}}_{0\Mspacer+}^{\alpha} \prescript{\ensuremath{\mathrm{C}}}{}{\ensuremath{\mathrm{D}}}_{0\Mspacer+}^{\alpha\Mspacer{n-1}}$.
    For notational convenience, we define $\prescript{\ensuremath{\mathrm{C}}}{}{\ensuremath{\mathrm{D}}}_{0\Mspacer+}^{\alpha\Mspacer0}$ as the identity operator.
\end{defn}

\begin{defn} \label{defn:FTforCaputo}
    We define the \emph{Fourier transform} $\left(\mathcal{F}y\right)$ of any $y\in L_1[0,T]$ by
    \[
        \left(\mathcal{F}y\right)(\la) = \int_0^T \re^{\ri\la t} y(t) \D t,
    \]
    for all $\la\in\CC$.
\end{defn}

Note that, for this Fourier transform, if $y$ is an absolutely continuous function on $[0,T]$, then integration by parts yields that
\begin{equation} \label{eqn:FourierDerivativeFiniteInterval}
    \left(\mathcal{F}y'\right)(\la) = (-\ri\la)\left(\mathcal{F}y\right)(\la) - y(0) + y(T)\re^{\ri\la T},
\end{equation}
for all $\la\in\CC$.

\begin{prop} \label{prop:FTRLFracInt}
    Suppose that $\alpha\in(0,1)$ and $y\in L_1[0,T]$.
    Then, for all $\la\in\CC$,
    \begin{equation} \label{eqn:FTCaputo.Int}
        \left( \mathcal{F} \Msup{I}{0}{+}{\alpha} y \right)(\la) =  (-\ri\la)^{-\alpha} \left(\mathcal{F}y\right)(\la).
    \end{equation}
\end{prop}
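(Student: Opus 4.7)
The plan is to verify the identity by a direct calculation: unfold the definitions of $\mathcal{F}$ and $\Msup{I}{0}{+}{\alpha}$, swap the order of integration via Fubini, and reduce the inner integral to the standard identity $\int_0^\infty u^{\alpha-1}\re^{\ri\la u}\D u = \Gamma(\alpha)(-\ri\la)^{-\alpha}$. First I would expand
\[
\left(\mathcal{F}\Msup{I}{0}{+}{\alpha}y\right)(\la) = \frac{1}{\Gamma(\alpha)}\int_0^T \re^{\ri\la x}\int_0^x \frac{y(t)}{(x-t)^{1-\alpha}}\D t\,\D x.
\]
Fubini applies since $y\in L_1[0,T]$ and the weakly singular kernel $(x-t)^{\alpha-1}$ is integrable on the triangle $\{(t,x):0\le t\le x\le T\}$ because $\alpha>0$. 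Swapping the order of integration and substituting $u=x-t$ in the inner integral gives
\[
\left(\mathcal{F}\Msup{I}{0}{+}{\alpha}y\right)(\la) = \frac{1}{\Gamma(\alpha)}\int_0^T y(t)\re^{\ri\la t}\int_0^{T-t}u^{\alpha-1}\re^{\ri\la u}\D u\,\D t.
\]

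The next step is to evaluate the inner $u$-integral. For $\la$ with $\Im(\la)>0$, one rotates the contour $[0,\infty)$ onto the ray $\{-\ri\la s : s > 0\}$; along this ray $\re^{\ri\la u}=\re^{-s}$ and $u^{\alpha-1}\D u = (-\ri\la)^{-\alpha} s^{\alpha-1}\D s$, producing the standard evaluation $\int_0^\infty u^{\alpha-1}\re^{\ri\la u}\D u = \Gamma(\alpha)(-\ri\la)^{-\alpha}$. This identity then extends off the branch cut of $(-\ri\la)^{-\alpha}$ by analytic continuation in $\la$.

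The main obstacle is the finite upper limit $T-t$, since literally $\int_0^{T-t}u^{\alpha-1}\re^{\ri\la u}\D u$ is not $\Gamma(\alpha)(-\ri\la)^{-\alpha}$ for finite $T-t$. My plan is to split $\int_0^{T-t}=\int_0^\infty-\int_{T-t}^\infty$ and to show that the tail contribution vanishes once the outer $t$-integration is performed. Substituting $v=u+t$ and interchanging the order of integration (justified once more by absolute integrability), the tail term rewrites as $-\int_T^\infty \re^{\ri\la v}\left(\Msup{I}{0}{+}{\alpha}y\right)(v)\D v$, where $\Msup{I}{0}{+}{\alpha}y$ is extended past $T$ by its defining convolution (with $y$ regarded as zero outside $[0,T]$). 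Closing the $v$-contour at infinity via Jordan's lemma in the half plane where $\Re(\ri\la v)\to-\infty$ eliminates this tail, and what remains identifies with $(-\ri\la)^{-\alpha}(\mathcal{F}y)(\la)$; the identity for the remaining values of $\la\in\CC$ then follows by analytic continuation.
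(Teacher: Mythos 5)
Your computation is sound up to and including the split $\int_0^{T-t}=\int_0^\infty-\int_{T-t}^\infty$ and the identification of the tail as $-\int_T^\infty \re^{\ri\la v}\left(\Msup{I}{0}{+}{\alpha}y\right)(v)\D v$. The final step fails: Jordan's lemma cannot eliminate this term. Jordan's lemma controls the contribution of large arcs when one closes a contour in the \emph{transform} variable; here $v$ is the integration variable of a one-sided improper integral of a fixed function, and there is no closed contour in sight --- deforming the ray $[T,\infty)$ merely moves the integral to another ray, it does not annihilate it. Concretely, take $y\equiv 1$, $\alpha=1/2$ and $\la=\ri$: for $v>T$ one has $\left(\Msup{I}{0}{+}{1/2}y\right)(v)=\tfrac{2}{\sqrt{\pi}}\left(\sqrt{v}-\sqrt{v-T}\right)>0$ and $\re^{\ri\la v}=\re^{-v}>0$, so the tail is strictly positive. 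Thus the identity is genuinely false if $\mathcal{F}\Msup{I}{0}{+}{\alpha}y$ is read literally as the integral over $[0,T]$ of definition~\ref{defn:FTforCaputo}: the fractional integral of $y$ does not vanish on $(T,\infty)$ even though $y$ does.

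The paper's proof resolves this not by killing the tail but by reinterpreting the left-hand side: extend $y$ by zero to $\RR$, note that on the relevant domain $\Msup{I}{0}{+}{\alpha}$ agrees with the full-axis (Liouville) fractional integral of the extension, read $\mathcal{F}\Msup{I}{0}{+}{\alpha}y$ as the full-line Fourier transform of that fractional integral (i.e.\ integrated over all of $[0,\infty)$, not truncated at $T$), and cite the known transform formula for the Liouville fractional integral. Under that reading your own calculation finishes immediately --- the $\int_0^\infty$ piece is exactly $(-\ri\la)^{-\alpha}\left(\mathcal{F}y\right)(\la)$, and your ``tail'' is precisely the portion of the transform over $(T,\infty)$ that the full-line reading retains on the left. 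The term you are trying to discard is exactly the discrepancy between the two readings of $\mathcal{F}$, and it is nonzero, so no contour argument will remove it; the fix is to adopt the paper's interpretation (or to carry the tail term explicitly, noting that in the paper's applications it is subsequently annihilated by the inverse-transform contour integrals in $\rho$).
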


\begin{proof}
    We denote also by $y$ its zero extension to $\RR$.
    For such a function $y$, the Fourier transform of definition~\eqref{defn:FTforCaputo} coincides with the full line Fourier transform.
    Moreover, on the relevant domain $t\in[0,T]$, the left sided ``half axis'' fractional integral of $y$ defined in definition~\ref{defn:Caputo} coincides with the left sided ``full axis'' fractional integral of $y$ in which the integral begins at $-\infty$ instead of $0$.
    Therefore, this result is a corollary of~\cite[theorem~7.1]{KMS2002a}.
\end{proof}

\begin{prop} \label{prop:FTCaputoFiniteInterval}
    If $y$ is absolutely continuous on $[0,T]$, and zero elsewhere, then
    \begin{equation} \label{eqn:FTCaputo.Deriv}
        \left(\mathcal{F} \prescript{\ensuremath{\mathrm{C}}}{}{\ensuremath{\mathrm{D}}}_{0\Mspacer+}^{\alpha} y \right)(\la) = (-\ri\la)^\alpha \left(\mathcal{F}y\right)(\la) - (-\ri\la)^{-(1-\alpha)}\left( y(0) - y(T)\re^{\ri\la T} \right).
    \end{equation}
\end{prop}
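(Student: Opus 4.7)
The plan is to chain together the two results already on hand: Proposition~\ref{prop:FTRLFracInt}, which computes the Fourier transform of a Riemann--Liouville fractional integral, and the integration-by-parts identity~\eqref{eqn:FourierDerivativeFiniteInterval}, which computes the Fourier transform of a classical derivative on $[0,T]$.  By definition~\ref{defn:Caputo}, $\prescript{\ensuremath{\mathrm{C}}}{}{\ensuremath{\mathrm{D}}}_{0\Mspacer+}^{\alpha}y = \Msup{I}{0}{+}{1-\alpha}(y')$, so the proof is essentially a one-line composition once the hypotheses are checked.

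First I would verify that $y'$ lies in $L_1[0,T]$, which follows from the absolute continuity of $y$ on $[0,T]$.  This legitimises applying Proposition~\ref{prop:FTRLFracInt} with exponent $1-\alpha\in(0,1)$ to the function $y'$, yielding
\[
    \left(\mathcal{F}\prescript{\ensuremath{\mathrm{C}}}{}{\ensuremath{\mathrm{D}}}_{0\Mspacer+}^{\alpha}y\right)(\la)
    = \left(\mathcal{F}\Msup{I}{0}{+}{1-\alpha}y'\right)(\la)
    = (-\ri\la)^{-(1-\alpha)} \left(\mathcal{F}y'\right)(\la)
\]
for all $\la\in\CC$.  Next I would substitute the boundary-corrected derivative rule~\eqref{eqn:FourierDerivativeFiniteInterval},
\[
    \left(\mathcal{F}y'\right)(\la) = (-\ri\la)\left(\mathcal{F}y\right)(\la) - y(0) + y(T)\re^{\ri\la T},
\]
and collect the $(-\ri\la)$ powers.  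The first term becomes $(-\ri\la)^{-(1-\alpha)+1}(\mathcal{F}y)(\la) = (-\ri\la)^\alpha(\mathcal{F}y)(\la)$, and the remaining boundary contributions combine into $-(-\ri\la)^{-(1-\alpha)}(y(0)-y(T)\re^{\ri\la T})$, matching~\eqref{eqn:FTCaputo.Deriv} exactly.

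The only real subtlety is making sure the composition is unambiguous on the branch cut of $(-\ri\la)^\alpha$, but since Proposition~\ref{prop:FTRLFracInt} is already stated for all $\la\in\CC$ (with the same principal-branch convention used throughout the paper), no further choice is needed; the two factors of $(-\ri\la)$ to fractional powers can be combined by additivity of exponents on the principal branch.  Hence I do not expect any genuine obstacle; the statement is essentially a bookkeeping corollary of Proposition~\ref{prop:FTRLFracInt} and the classical boundary-term formula~\eqref{eqn:FourierDerivativeFiniteInterval}, and the proof should occupy only a few lines.
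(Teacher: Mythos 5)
Your proof is correct and follows exactly the route the paper takes: the paper's own proof simply states that the result is immediate from definition~\ref{defn:Caputo}, proposition~\ref{prop:FTRLFracInt} applied to $y'\in L_1[0,T]$, and equation~\eqref{eqn:FourierDerivativeFiniteInterval}, which is precisely the composition you carry out. No gaps.
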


\begin{proof}
    The proof is immediate from definition~\ref{defn:Caputo}, proposition~\ref{prop:FTRLFracInt} and equation~\eqref{eqn:FourierDerivativeFiniteInterval}.
\end{proof}

\bibliographystyle{amsplain}
{\small\bibliography{../dbrefs}}

\end{document}